\numberwithin{equation}{section}
\theoremstyle{definition}
\newtheorem{definition}{Definition}[section]
\newtheorem{remark}[definition]{Remark}
\theoremstyle{plain}
\newtheorem{theorem}[definition]{Theorem}
\newtheorem{proposition}[definition]{Proposition}
\newtheorem{lemma}[definition]{Lemma}
\newtheorem{corollary}[definition]{Corollary}
\newcommand{\ve}{\varepsilon}
\newcommand{\ez}{\epsilon}
\newcommand{\del}{\partial}
\newcommand{\lra}{\longrightarrow}
\newcommand{\e}{\mathrm{e}}
\newcommand{\dd}{\mathrm{d}}
\newcommand{\N}{\ensuremath{\mathbb{N}}}
\newcommand{\R}{\ensuremath{\mathbb{R}}}
\newcommand{\cE}{\ensuremath{\mathcal{E}}}
\newcommand{\cT}{\ensuremath{\mathcal{T}}}
\newcommand{\fm}{\ensuremath{\mathfrak{m}}}
\newcommand{\fn}{\ensuremath{\mathfrak{n}}}
\newcommand{\bb}{\ensuremath{\mathbf{b}}}
\newcommand{\bL}{\ensuremath{\mathbf{L}}}
\newcommand{\LL}{\ensuremath{\mathscr{L}}}
\def\loc{\mathop{\mathrm{loc}}\nolimits}
\def\vol{\mathop{\mathrm{vol}}\nolimits}
\def\div{\mathop{\mathrm{div}}\nolimits}
\def\Ric{\mathop{\mathrm{Ric}}\nolimits}
\def\trace{\mathop{\mathrm{trace}}\nolimits}
\def\HS{\mathop{\mathrm{HS}}\nolimits}
\newcommand{\rev}[1]{\overleftarrow{#1}}
\newcommand{\ol}[1]{\overline{#1}}
\title{Splitting theorems for weighted Finsler spacetimes\\ via the $p$-d'Alembertian: beyond the Berwald case}
\author{Erasmo CAPONIO\thanks{
Dipartimento di Meccanica, Matematica e Management,
Politecnico di Bari, via Orabona 4 - 70125 Bari, Italy
(\textsf{erasmo.caponio@poliba.it})}
\and
Argam OHANYAN\thanks{
Faculty of Mathematics, University of Vienna,
Oskar-Morgenstern-Platz 1, 1090 Vienna, Austria
(\textsf{argam.ohanyan@univie.ac.at})}
\and
Shin-ichi OHTA\thanks{
Department of Mathematics, Osaka University, Osaka 560-0043, Japan
(\textsf{s.ohta@math.sci.osaka-u.ac.jp});
RIKEN Center for Advanced Intelligence Project (AIP),
1-4-1 Nihonbashi, Tokyo 103-0027, Japan}}
\begin{document}

\maketitle

\begin{abstract}
	A timelike splitting theorem for Finsler spacetimes was previously established by the third author, in collaboration with Lu and Minguzzi, under relatively strong hypotheses, including the Berwald condition. This contrasts with the more general results known for positive definite Finsler manifolds.
	In this article, we employ a recently developed strategy for proving timelike splitting theorems using the elliptic $p$-d'Alembertian. This approach, pioneered by Braun, Gigli, McCann, Sämann, and the second author, allows us to remove the restrictive assumptions of the earlier splitting theorem.
		For timelike geodesically complete Finsler spacetimes, we establish a diffeomorphic splitting. In the specific case of Berwald spacetimes, we show that the Busemann function generates a group of isometries via translations. Furthermore, for Berwald spacetimes, we extend these splitting theorems by replacing the assumption of timelike geodesic completeness with global hyperbolicity.
		Our results encompass and generalize the timelike splitting theorems for weighted Lorentzian manifolds previously obtained by Case, Woolgar and Wylie.
\end{abstract}         
\vspace{1em}

\noindent
\emph{Keywords}: Finsler spacetime, weighted Ricci curvature, $p$-d'Alembertian, splitting theorem, Berwald spacetime
\medskip

\noindent
\emph{MSC2020}: 53C50, 53C60
\tableofcontents

\section{Introduction}\label{sc:intro}

Splitting theorems are important rigidity results which appear in various fields of geometry. They state that a complete geometric space of nonnegative curvature containing an infinitely long, distance-realizing curve (i.e., a line) must split off that curve as an isometric factor of the real line. The intuition is that nonnegative curvature promotes the formation of conjugate points and thus curves should stop realizing the distance in finite time, however, the existence of a line is an anomaly that is only possible in product geometries.

In both Riemannian and Lorentzian geometry, splitting theorems under the assumption of nonnegative Ricci curvature are major milestones of their respective fields.
In the Riemannian case, Cheeger--Gromoll's celebrated splitting theorem \cite{CG71} and its generalizations (e.g., the almost-splitting of Cheeger--Colding \cite{CheegerColding96} or the generalization to $\mathsf{RCD}$-spaces by Gigli \cite{gigli2013}) are important building blocks for the theory of geometric structures of (possibly singular) spaces (see Cheeger--Colding \cite{CC97}, Mondino--Naber \cite{MondinoNaber2019}).
In the context of general relativity, the nonnegativity of the Ricci curvature in timelike directions is usually called the timelike convergence condition (which, under the Einstein equations, is implied by the strong energy condition).  The splitting of a timelike geodesically complete spacetime under the strong energy condition  was conjectured by Yau \cite{Yau82} and established by Newman \cite{Newman:90}, building on earlier work of Eschenburg \cite{Eschenburg:1988} and on the globally hyperbolic case proved by Galloway \cite{Galloway:89}.  Consequently, a  dichotomy arises between the product splitting of a spacetime and the existence of incomplete timelike  geodesics (see the standard texts of Hawking--Ellis \cite{HawkingEllisAnniv2023}, Beem--Ehrlich--Easley \cite{BEE:1996}).
These results are preceded by a splitting theorem under the stronger assumption of a timelike sectional curvature bound due to Beem--Ehrlich--Markvorsen--Galloway \cite{BeemEhrlichMarkvorsenGalloway84, BEMG85} (this latter result has recently been generalized to the synthetic setting of Lorentzian length spaces by Kunzinger--Sämann \cite{KS:18}; see  \cite{BeranOhanyanRottSolis23}).

Both the Riemannian and the Lorentzian splitting theorems under nonnegative Ricci curvature bounds were generalized to the Finsler setting (see Ohta \cite{Osplit} in the Riemann--Finsler case, Lu--Minguzzi--Ohta \cite{LMO3} in the Lorentz--Finsler case), adopting appropriate weighted formulations.
In contrast to the results for Finsler manifolds in \cite{Osplit}, the work on Finsler spacetimes in \cite{LMO3} required stronger hypotheses. More precisely, in addition to the usual set of assumptions (timelike geodesic completeness, nonnegative weighted Ricci curvature in timelike directions, existence of a timelike line), it is  assumed that there exists a timelike geodesically complete Lorentzian metric $g$ on $M$ whose Levi-Civita connection agrees with the Chern connection (thus, the Lorentz--Finsler spacetime $(M,L)$ has  to be  of {\em Berwald type})  and  such that  the given timelike line for the Lorentz--Finsler metric lifts as a timelike line in the  universal cover of $(M,g)$. 

The aim of this article is to improve the splitting theorem in \cite[Theorem~1.2]{LMO3} by removing these technical assumptions. Precisely, although the isometric splitting is clearly out of reach (see Remark~\ref{rm:split}), we can obtain a diffeomorphic, measure-preserving splitting and, under the  additional  Berwald condition, also obtain a one-parameter family of isometries as translations (a local Finslerian isometric splitting can be obtained under significantly strong symmetry assumptions on $L$, see Caponio--Stancarone \cite[Theorem 4.8] {CapSta18}, that must also lack twice-differentiability along a timelike direction; see also \cite[Remark 7.5]{CapCor23}).
In particular, while \cite{LMO3} treated only Newman's timelike geodesic completeness version \cite{Newman:90}, we also prove a counterpart of Galloway's global hyperbolicity theorem \cite{Galloway:89} in the Berwald setting.

Such an improvement stems from a new strategy for Lorentzian manifolds developed by Braun--Gigli--McCann--Ohanyan--Sämann \cite{quintet}, based on the $p$-d'Alembertian, a quasi-linear second order differential operator   defined analogously to the $p$-Laplacian in the positive definite case, but with $p < 1$. The $p$-d'Alembertian was introduced by Mondino--Suhr \cite{MS:22} and later investigated in the non-smooth setting by Beran--Braun--Calisti--Gigli--McCann--Ohanyan--Rott--Sämann \cite{octet}. In the latter work, its connection with a convex $p$-energy functional was used to reveal its elliptic nature in metric measure spacetimes.  This ellipticity was then used in \cite{quintet} to simplify proofs of the Lorentzian splitting theorems. 
Unlike the standard ($2$-)d'Alembertian (the spacetime Laplacian), which is hyperbolic and requires identifying a suitable spacelike hypersurface for maximum principle arguments, the  ellipticity of the $p$-d'Alembertian allows for a direct application of the maximum principle, yielding a simpler, alternative proof of the timelike splitting theorem in both timelike geodesically complete and globally hyperbolic spacetimes (see \cite{quintet} for details). The same set of authors also extended this approach to weighted Lorentzian manifolds with non-smooth metrics and weights (see \cite{quintet2}).

We extend the elliptic \(p\)-d'Alembertian strategy to smooth Finsler spacetimes.
In this setting the operator is  anisotropic, and even the case \(p=2\) yields a quasilinear d'Alembertian.
Accordingly, the point is not to ``trade linearity for ellipticity'' \cite{mccann2025}, but rather to obtain an anisotropic elliptic operator within the Finsler spacetime framework.

A Finsler spacetime $(M,L)$ is a time oriented Lorentz--Finsler manifold; see Definition~\ref{df:spacetime}. Lorentz--Finsler structures and Finsler spacetimes have been considered both as a 
geometric description of gravity beyond general relativity (see, e.g., \cite{KoStSt09, Pfeife19, BeJaSa20}) and in the classical limit of modified dispersion relations
encompassing Lorentz violation in quantum gravity and in the Standard Model Extension (see, e.g., \cite{GiLiSi06, Vacaru11, Kostel11, KoRuTs12, Russel15, Collad17}). Remarkably, they provide a counter-example to the Schiff conjecture \cite{LaPeHa12} but they are compatible with a slight modification of Ehlers--Pirani--Schild axioms for general relativity (see \cite{LaePer18, BeJaSa20}).

We denote by $\tau$ the time separation function on $M$ (see \eqref{timesep}).
Then, given a timelike straight line $\eta\colon \R \lra M$ (i.e., $\tau(\eta(s),\eta(t))=t-s$ for all $s<t$), the associated Busemann function $\bb_\eta \colon M \lra \R$ defined by
\[ \bb_\eta(x) :=\lim_{t \to \infty} \bigl\{ t-\tau\bigl( x,\eta(t) \bigr) \bigr\} \]
plays a vital role to establish a splitting theorem along $\eta$.
Our first main theorem is the following.

\begin{theorem}[Splitting theorem]\label{th:LFsplit}
Let $(M,L)$ be a connected Finsler spacetime equipped with a measure $\fm$ on $M$.
Suppose the following:
\begin{enumerate}[{\rm (1)}]
\item\label{it:split1}
$(M,L)$ is either timelike geodesically complete, or Berwald and globally hyperbolic;
\item\label{it:split2}
there is a complete timelike straight line $\eta\colon \R \lra M$;
\item\label{it:split3}
$(M,L,\fm)$ satisfies $\Ric_N \ge 0$ in timelike directions for some $N \in (-\infty,0) \cup [n,\infty]$, where $n=\dim M$;
\item\label{it:split4}
in the case of $N \in (-\infty,0) \cup \{\infty\}$,
$(M,L)$ satisfies the timelike $\ez$-completeness for some $\ez$ in the range \eqref{eq:erange} associated with $N$.
\end{enumerate}
Then, the Lorentzian manifold $(M,g_{\nabla(-\bb_{\eta})})$ isometrically splits
in the sense that there exists an $(n-1)$-dimensional Riemannian manifold $(\Sigma,h)$
such that $(M,g_{\nabla(-\bb_{\eta})})$ is isometric to $(\R \times \Sigma,-\dd t^2 +h)$.
Moreover, $(\Sigma,h)$ is equipped with a measure $\fn$ for which
$\fm$ coincides with the product of the Lebesgue measure $\dd t$ on $\R$ and $\fn$.
\end{theorem}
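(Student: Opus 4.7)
The plan is to adapt the elliptic $p$-d'Alembertian strategy of \cite{quintet} from the Lorentzian to the Finsler setting. Attached to the complete timelike straight line $\eta$, introduce the forward and backward Busemann functions
\[
 \bb^+(x) := \lim_{t \to \infty} \bigl\{ t - \tau\bigl(x,\eta(t)\bigr) \bigr\}, \qquad
 \bb^-(x) := \lim_{t \to \infty} \bigl\{ t - \tau\bigl(\eta(-t),x\bigr) \bigr\},
\]
so that $\bb^+ = \bb_\eta$, and work on the chronological diamond $\Omega$ of $\eta$ where both functions are finite and locally Lipschitz. The reverse triangle inequality combined with $\tau(\eta(-t),\eta(t)) = 2t$ gives $\bb^+ + \bb^- \ge 0$ on $\Omega$, with equality along $\eta$ (since $\bb^+(\eta(s)) = s = -\bb^-(\eta(s))$). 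Through each $x\in\Omega$ one produces a timelike asymptote to $\eta$ along which $\bb^\pm$ is affine: under timelike geodesic completeness these arise directly as limits of maximizing segments from $x$ to $\eta(t)$, while under the Berwald and globally hyperbolic alternative they are supplied by a limit-curve theorem, exploiting that a Berwald spacetime is affinely equivalent to an auxiliary Lorentzian structure so that the classical causal-theoretic machinery remains applicable.

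The analytic core is to show that both $\bb^+$ and $\bb^-$ are superharmonic for the elliptic $p$-d'Alembertian $\square_p$ associated with $(M,L,\fm)$ and some $p\in(0,1)$, i.e.\ $\square_p \bb^\pm \le 0$ in the viscosity sense on $\Omega$. This is a Laplacian comparison along the asymptotes: on a future asymptote $\sigma_x$ through $x$, the function $\bb^+$ admits the smooth upper barrier $y \mapsto \bb^+(x) + \tau(x,y)$, and the hypothesis $\Ric_N \ge 0$ in timelike directions, coupled with the timelike $\ez$-completeness of condition~(4) when $N \in (-\infty,0)\cup\{\infty\}$, drives a Bochner/Riccati estimate for the weighted mean curvature of the corresponding level sets, which translates into the desired one-sided bound for $\square_p$ tested against smooth functions. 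Applied to past asymptotes, the same argument yields $\square_p \bb^- \le 0$. Although the Finsler $p$-d'Alembertian is a priori nonlinear, its origin as the Euler--Lagrange operator of a convex $q$-energy (with $p^{-1}+q^{-1}=1$) on timelike $1$-forms makes it elliptic, putting a strong maximum principle for quasilinear elliptic operators at our disposal.

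Summing the two inequalities gives $\square_p(\bb^++\bb^-) \le 0$ on $\Omega$, while $\bb^++\bb^-\ge 0$ attains its minimum $0$ on $\eta$. The strong minimum principle then forces $\bb^++\bb^- \equiv 0$, after which elliptic regularity applied to $\square_p\bb = 0$ upgrades $\bb := \bb^+$ to a smooth function whose $L$-gradient $\nabla(-\bb)$ is a past-directed timelike unit vector field with complete flow, and the equality case of the Riccati estimate identifies the level sets of $\bb$ as totally geodesic for the osculating Lorentzian metric $g_{\nabla(-\bb)}$. Setting $\Sigma := \bb^{-1}(0)$ with the Riemannian metric $h$ induced from $g_{\nabla(-\bb)}$ and using the flow of $\nabla(-\bb)$ yields the isometry $(M,g_{\nabla(-\bb)}) \cong (\R\times\Sigma,-\dd t^2 + h)$; the measure statement follows because the same equality case forces the weighted Jacobian of the flow to be $t$-independent, so that $\fm$ disintegrates as $\dd t \otimes \fn$ for some measure $\fn$ on $\Sigma$.

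The main obstacle I expect is the Laplacian comparison for $\square_p$ in the Finsler setting with the full parameter range $N\in(-\infty,0)\cup[n,\infty]$: one must interpret $\square_p\bb^\pm$ at points where the gradient is only Lipschitz, handle the direction-dependence of the Finsler fundamental tensor along the asymptotes, and extract from $\Ric_N \ge 0$ together with the $\ez$-completeness of condition~(4) the correct pointwise Riccati inequality in each regime of $N$. A secondary difficulty, specific to the Berwald and globally hyperbolic alternative, is that timelike geodesic completeness must be replaced by a compactness argument both to produce asymptotes and to guarantee that the flow of $\nabla(-\bb)$ exhausts $M$; here one leverages the affine equivalence of Berwald connections with those of a classical Lorentzian structure to keep Galloway's globally hyperbolic arguments from \cite{Galloway:89} essentially intact.
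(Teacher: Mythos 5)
Your overall architecture (Busemann functions $\bb^{\pm}$, $p$-superharmonicity via a d'Alembertian comparison, strong maximum principle on $\bb^++\bb^-$, then regularity and flow of $\nabla(-\bb)$) matches the paper's, but there are two genuine gaps. The decisive one is the step ``Summing the two inequalities gives $\square_p(\bb^++\bb^-)\le 0$.'' The $p$-d'Alembertian is \emph{nonlinear} (even in the Lorentzian case for $p\ne 2$, and always in the Finsler case), so $\square_{\fm,p}\bigl(-\bb^+\bigr)\ge 0$ and $\square_{\fm,p}\bb^-\le 0$ do \emph{not} add up to an inequality for $\square_{\fm,p}$ of the sum. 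What the paper actually does (Proposition~\ref{pr:max}) is interpolate $h_t=(1-t)(-\bb_\eta)+t\,\ol{\bb}_\eta$, write
\[
0\ge\int\phi\bigl(\square_{\fm,p}h_1-\square_{\fm,p}h_0\bigr)\,\dd\fm
=-\int\!\!\int_0^1\frac{\dd}{\dd t}\Bigl[F^*(\dd h_t)^{p-2}\,\dd\phi(\nabla h_t)\Bigr]\,\dd t\,\dd\fm,
\]
and observe that, after carrying out the $t$-derivative using \eqref{eq:Leg}, \eqref{eq:grad}, \eqref{eq:Euler}, the right-hand side is a \emph{linear} second-order divergence-form operator acting on $u=\bb_\eta+\ol{\bb}_\eta$ whose coefficient matrix is positive definite near $\eta$. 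That linearization step is the heart of the $p$-d'Alembertian strategy and cannot be replaced by naive addition.

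The second gap is in the Berwald/globally hyperbolic alternative. You appeal to ``affine equivalence of Berwald connections with those of a classical Lorentzian structure,'' i.e.\ a Lorentzian metrizability of Berwald spacetimes in the style of Szab\'o's Riemannian theorem. As the paper's Remark~\ref{rm:met} states, such a metrizability theorem is \emph{not known} for Lorentz--Finsler Berwald structures, and in fact removing that extra hypothesis from \cite{LMO3} is one of the stated goals of this paper. The paper instead uses the Berwald condition directly: the isometric translations of Theorem~\ref{th:Berwald}\eqref{it:isom} and the split geodesic equation of Theorem~\ref{th:Berwald}\eqref{it:geod} are invoked in Subsection~\ref{ssc:ghyp} to show past (and future) completeness of asymptotes by a Galloway-style open-and-closed argument, without any metrization. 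A smaller inaccuracy: the paper does not obtain $C^\infty$ of $\bb_\eta$ by elliptic regularity for $\square_{\fm,p}$ (which degenerates on the light cone); instead it first gets $C^{1,1}_{\loc}$ from semiconcavity plus semiconvexity, then applies a Bochner identity for the ordinary weighted d'Alembertian (Proposition~\ref{pr:Boch}, Lemma~\ref{lm:Wylie}, Corollary~\ref{cr:smooth}) in approximate/integrated form to conclude $\nabla^2(-\bb_\eta)\equiv 0$ and hence smoothness.
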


The Lorentzian structure $g_{\nabla(-\bb_\eta)}$ is the second order approximation of $L$ in the direction $\nabla(-\bb_\eta)$ (see \eqref{eq:g_v}).
One can also see that, for every $x \in \Sigma$, the curve $\zeta(t)$ corresponding to $t \longmapsto (t,x)$ is a timelike straight line bi-asymptotic to $\eta$ and an integral curve of the gradient vector field $\nabla(-\bb_\eta)$.
We remark that $\Sigma$ is given as the level surface $\bb_\eta^{-1}(0)$, and then $(\Sigma,L|_{T\Sigma})$ is spacelike if $L$ is reversible and $\dim M \ge 3$, though it is unclear whether $L|_{T\Sigma}$ is positive definite (strongly convex); see Corollary~\ref{cr:Sigma} for details.

The weighted Ricci curvature $\Ric_N$ is a modification of the usual Ricci curvature taking into account the behavior of the measure $\fm$, and timelike $\ez$-completeness can be regarded as a weighted version of timelike geodesic completeness (see Definitions~\ref{df:wRic}, \ref{df:cmplt}).
When $N \in [n,\infty)$, we can take $\ez=1$ and timelike $1$-completeness always holds true.

The Berwald condition in \eqref{it:split1} means that the covariant derivative is independent of a reference vector (see Definition~\ref{df:Ber}).
It allows us to show that, in the globally hyperbolic setting, nearby asymptotes to the straight line $\eta$ are complete, and that the local splitting around $\eta$ can be globalized, along the lines of Galloway's argument given in \cite{Galloway:89}.
Since such difficulties do not arise in the timelike geodesically complete setting, we do not need to assume the Berwald condition there.
Let us remark that, in Lu--Minguzzi--Ohta \cite[Theorem~1.2]{LMO3}, only timelike geodesically complete Berwald spacetimes which are Lorentzian-metrizable were considered (see also Remark~\ref{rm:met}).

\begin{remark}\label{rm:split}
Theorem~\ref{th:LFsplit} implies the diffeomorphic and measure-preserving splitting of $(M,\fm)$, however, the metric $L$ does not have any splitting structure.
This is actually natural in the Lorentz--Finsler framework, for one can modify $L$ in spacelike directions without changing it in timelike directions.  Since the hypotheses are concerned only with (future-directed) timelike directions, such a deformation in spacelike directions does not influence them.
Hence, in the situation of Theorem~\ref{th:LFsplit}, we have no control of $L$ in spacelike directions.
\end{remark}

In the Berwald case, thanks to its more rigid structure, we have in fact a certain control of $L$ also in spacelike directions, in the same spirit as Ohta \cite[Proposition~5.2]{Osplit} and Lu--Minguzzi--Ohta \cite[Corollary~1.3]{LMO3}.

\begin{theorem}[Isometric translations in Berwald spacetimes]\label{th:Berwald}
Let $(M,L,\fm)$ be a Berwald spacetime satisfying the hypotheses in Theorem~$\ref{th:LFsplit}$.
Then, we have the following.
\begin{enumerate}[{\rm (i)}]
\item\label{it:isom}
In the product structure $M=\R \times \Sigma$, the translations
\[ \Phi_t(s,x):=(s+t,x), \qquad (s,x) \in \R \times \Sigma,\,\ t \in \R, \]
are isometric transformations of $(M,L)$ and preserve the measure $\fm$.

\item\label{it:geod}
The geodesic equation of $M$ splits into those of $\R$ and $\Sigma$.
Precisely, a curve in $M$ is geodesic if and only if its projections to $\R$ and $\Sigma$ are geodesic.
\end{enumerate}
\end{theorem}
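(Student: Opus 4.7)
The plan is to reduce both claims to showing that the vector field $V := \nabla(-\bb_\eta)$, which corresponds to $\partial_t$ under the splitting from Theorem~\ref{th:LFsplit}, is parallel with respect to the (direction-independent) Chern connection $D$ of the Berwald structure on $(M,L)$. Once $DV \equiv 0$ is available, both (i) and (ii) follow rapidly, via the standard characterization of Berwald spacetimes that $D$-parallel transport is a linear isometry between the Minkowski norms at distinct tangent spaces.

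Establishing $DV \equiv 0$ splits into two parts. First, $D_V V = 0$: by Theorem~\ref{th:LFsplit} the integral curves $\zeta_x(t) = (t,x)$ of $V$ are timelike straight lines, hence $L$-geodesics, so $V$ is its own geodesic field. Second and more delicate is the transverse parallelism $D_X V = 0$. The Lorentzian splitting $g_V = -\dd t^2 + h$ yields $\nabla^{g_V} V = 0$ for the Levi-Civita connection of $g_V$; to transfer this to $D$, I would apply the Koszul formula to $\nabla^{g_V}_X V$ and expand each derivative $X g_V(Y,Z)$ via the Chern metric compatibility $X g_V(Y,Z) = g_V(D_X Y, Z) + g_V(Y, D_X Z) + 2 C_V(D_X V, Y, Z)$. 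Every Cartan term appearing this way has either $V$ in one slot of $C$ (vanishing by homogeneity of the Cartan tensor) or the factor $D_V V = 0$, so all Cartan corrections cancel; the torsion-freeness of $D$ then reduces the Koszul identity to $g_V(\nabla^{g_V}_X V, Z) = g_V(D_X V, Z)$, forcing $D_X V = 0$.

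With $DV \equiv 0$ in hand, for any $X \in T_p M$ extended locally to a $V$-invariant field $\widetilde X$ (so that $[V, \widetilde X] = 0$) one has $D_V \widetilde X = D_{\widetilde X} V + [V, \widetilde X] = 0$ by torsion-freeness and $DV = 0$, meaning $t \mapsto d\Phi_t(X)$ is $D$-parallel along $\zeta_p$. The Berwald characterization then gives $L(\Phi_t(p), d\Phi_t(X)) = L(p, X)$, proving (i); $\fm$-invariance is immediate from the product decomposition $\fm = \dd t \otimes \fn$ furnished by Theorem~\ref{th:LFsplit}. For (ii), the distribution tangent to the $\{t\} \times \Sigma$ slices, namely $V^\perp$ with respect to $g_V$, is $D$-parallel because $V$ is, making the slices totally geodesic; in coordinates $(t, x^1, \dots, x^{n-1})$ with $V = \partial_t$, the identities $\Gamma^l_{jt} = 0$ (from $DV = 0$) and $\Gamma^t_{jk} = 0$ for $j, k \neq t$ (from the total geodesicity of $\Sigma$) decouple the affine Berwald geodesic equation $\ddot c^l + \dot c^j \dot c^k \Gamma^l_{jk} = 0$ into $\ddot c_1 = 0$ on the $\R$-factor and the geodesic equation of the induced connection on $\Sigma$.

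The hardest step will be the Koszul/Cartan computation establishing $D_X V = 0$ for transverse $X$: this is the only place where the Berwald hypothesis is genuinely exploited, for in a general Finsler spacetime the Cartan-tensor corrections distinguishing $\nabla^{g_V}$ from the Chern connection cannot be absorbed without the direction-independence of $D$, in agreement with Remark~\ref{rm:split}.
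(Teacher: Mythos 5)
Your strategy is fundamentally sound — reduce both parts to $D V\equiv0$ for $V=\nabla(-\bb_\eta)$ and then exploit the Berwald characterization that $D$-parallel transport is a linear isometry of Minkowski norms — and this achieves the same goal as the paper. However, the Koszul/Cartan computation you propose to establish the transverse parallelism $D_XV=0$ is an unnecessary detour. Corollary~\ref{cr:smooth} already gives $\nabla^2(-\bb_\eta)=D^{\nabla(-\bb_\eta)}_v[\nabla(-\bb_\eta)]\equiv 0$, and the Berwald hypothesis (reference-vector independence of the Chern connection) immediately upgrades this to $D_vV=0$ for every $v$. The paper in fact runs the implication in the opposite direction: starting from the Chern-Hessian vanishing, it invokes \cite[Proposition~3.2]{LMO1} to deduce $\nabla^{g_V}V=0$, whereas you would reprove $D^V_XV=0$ from $\nabla^{g_V}_XV=0$ via Koszul. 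Your computation is correct (the Cartan corrections do all cancel, using $C_V(\cdot,V,\cdot)=0$ and $D^V_VV=0$), but it is logically redundant. Also note that this cancellation works in any Finsler spacetime, not just Berwald ones, so your closing remark that the Koszul/Cartan step is ``the only place where the Berwald hypothesis is genuinely exploited'' is off the mark: what Berwald actually buys is the reference-vector independence of $D$ and the fact that $D$-parallel transport preserves $L$.

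Otherwise your argument for~\eqref{it:isom} repackages the paper's direct computation $\tfrac{\dd}{\dd t}L(\dd\Phi_t(v))=g_V(D^V_{\dot\zeta}V,V)=0$ into the torsion-freeness identity $D_V\widetilde X=D_{\widetilde X}V+[V,\widetilde X]=0$, which is a valid alternative. For~\eqref{it:geod}, your conceptual route (total geodesicity of $\bb_\eta^{-1}(t)$ from the $D$-parallelism of $V^\perp$, yielding $\Gamma^l_{jt}=0$ and $\Gamma^t_{jk}=0$) arrives at the same Christoffel-symbol vanishing as the paper's more explicit chain of computations $\gamma^i_{jk}\to N^i_j\to\Gamma^i_{jk}$ in the coordinates $x^1=\bb_\eta$, $x^i=z^{i-1}$; both correctly decouple the affine Berwald geodesic equation.
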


As is explained in Remark~\ref{rm:split}, one cannot expect the isometry of translations in general Finsler spacetimes.
We also remark that, since we consider measured Finsler spacetimes and there may be no canonical measure (like the volume measure in the Lorentzian case) in the Lorentz--Finsler setting, Theorem~\ref{th:LFsplit} does not cover the unweighted case (assuming $\Ric \ge 0$ in place of $\Ric_N \ge 0$).
One can define the $p$-d'Alembertian without using a measure (Remark~\ref{rm:unwght}), however, what is missing is a linear divergence, which appears in a Bochner-type identity (Proposition~\ref{pr:Boch}) used to obtain the smoothness of the Busemann function.

In the Lorentzian case, the weighted version studied by Case \cite[Theorem~1.2]{Ca} ($N \in [n,\infty]$) and Woolgar--Wylie \cite[Theorem~1.5]{WW} ($N \in (-\infty,1) \cup [n,\infty]$) includes the unweighted one of Galloway \cite{Galloway:89} and Newman \cite{Newman:90} by choosing the volume measure as $\fm$ and letting $N=n$.
For a weighted Lorentzian manifold $(M,g,\fm)$, since $g_{\nabla(-\bb_\eta)}$ coincides with $g$, Theorem~\ref{th:LFsplit} recovers the splitting theorems in \cite{Ca,WW} as the special case of $\ez=1$ (resp.\ $\ez=0$) for $N \in [n,\infty)$ (resp.\ $N \in (-\infty,0) \cup \{\infty\}$) (the case of $N \in [0,1)$ was excluded for a technical reason; see Remark~\ref{rm:Wylie}).

The article is organized as follows.
After reviewing necessary notions and known results on weighted Finsler spacetimes in Section~\ref{sc:pre}, we introduce the $p$-d'Alembertian in Section~\ref{sc:p-dA}.
Then we discuss the properties of Busemann functions in Section~\ref{sc:busemann}. Section~\ref{sc:harm} includes the main ingredients: the $p$-harmonicity of the Busemann function based on the maximum principle via the $p$-d'Alembertian, and the Bochner-type identity (for improving the regularity of the Busemann function).
In the latter step, in contrast with the Bochner-type identity for the $p$-d'Alembertian in \cite{quintet}, we employ the identity in terms of the usual d'Alembertian, which turns out to be sufficient for the splitting theorem.
Finally, Section~\ref{sc:split} is devoted to the proofs of Theorems~\ref{th:LFsplit}, \ref{th:Berwald}.

\section{Preliminaries for Finsler spacetimes}\label{sc:pre}
We briefly recall the necessary notions for Lorentz--Finsler manifolds along the lines of  \cite{LMO1,LMO2,LMO3} 
(we remark that $\dim M=n+1$ there).
We refer to  \cite{O'Neill:1983} and  \cite{BEE:1996} for the basics of Lorentzian geometry and to \cite{Perlic06,Min-cone, Min-Rev, Min-causality, JavSan20} for some generalizations including Lorentz--Finsler manifolds.

Throughout the article,
let $M$ be a connected $C^{\infty}$-manifold without boundary of dimension $n\ge 2$.
Given a local coordinate system $(x^i)_{i=1}^n$ on an open set $U\subset M$,
we will use the fiber-wise linear coordinates
$v =\sum_{i=1}^n v^i (\partial/\partial x^i)|_x,$ $x \in U$, on the coordinate patch $TU$ in the tangent bundle $TM$.

\subsection{Finsler spacetimes}\label{ssc:Finsler}
For our purposes, we adopt the following notion of Lorentz--Finsler structure:
\begin{definition}[Lorentz--Finsler structures]\label{df:LFstr}
A \emph{Lorentz--Finsler structure} on $M$ is a function
$L\colon TM \lra \R$ satisfying the following conditions:
\begin{enumerate}[(1)]
\item $L \in C^{\infty}(TM \setminus \{0\})$;
\item $L(cv)=c^2 L(v)$ for all $v \in TM$ and $c>0$;
\item For any $v \in TM \setminus \{0\}$, the symmetric matrix
\begin{equation}\label{eq:g_ij}
\big( g_{ij}(v) \big)_{i,j=1}^n
 :=\bigg( \frac{\del^2 L}{\del v^i \del v^j}(v) \bigg)_{i,j=1}^n
\end{equation}
is non-degenerate with signature $(-,+,\ldots,+)$.
\end{enumerate}
Then we call $(M,L)$ a ($C^{\infty}$-)\emph{Lorentz--Finsler manifold}.
\end{definition}

For $v \in T_xM \setminus \{0\}$, we can define a Lorentzian metric $g_v$ on $T_xM$
by using \eqref{eq:g_ij} as
\begin{equation}\label{eq:g_v}
g_v \Bigg( \sum_{i=1}^n a_i \frac{\del}{\del x^i}\bigg|_x,
 \sum_{j=1}^n b_j \frac{\del}{\del x^j}\bigg|_x \Bigg)
 :=\sum_{i,j=1}^n g_{ij}(v) a_i b_j.
\end{equation}
Then we have $g_v(v,v)=2L(v)$ by Euler's homogeneous function theorem.

A tangent vector $v \in TM $ is said to be \emph{timelike} (resp.\ \emph{null})
if $L(v)<0$ (resp.\ $L(v)=0$).
We say that $v$ is \emph{lightlike} if it is null and nonzero, and \emph{causal} if it is timelike or lightlike.
\emph{Spacelike} vectors are those for which $L(v)>0$ or $v=0$.
Denote by $\Omega'_x \subset T_xM$ the set of timelike vectors and put
$\Omega' :=\bigcup_{x \in M} \Omega'_x$.
For causal vectors $v$, we define
\begin{equation}\label{eq:LtoF}
F(v) :=\sqrt{-2L(v)} =\sqrt{-g_v(v,v)}.
\end{equation}
Note that $\Omega'_x \neq \emptyset$, every connected component of $\Omega'_x$ is a convex cone (see \cite{Be},  \cite[Lemma~2.3]{LMO1}),
and that the closures of different components intersect only at $0$ (see  \cite[Proposition~1]{Min-cone}).

\begin{remark}\label{rm:Omega}
On the one hand, the number of connected components of $\Omega'_x$ may be larger than $2$ in this generality (see \cite{Be}, \cite[Example~2.4]{LMO1}).
On the other hand, if $L$ is reversible (i.e., $L(v)=L(-v)$ for any $v\in TM$) and $n \ge 3$, then $\Omega'_x$ has exactly two connected components (see \cite[Theorem 7]{Min-cone}).
\end{remark}

\begin{definition}[Finsler spacetimes]\label{df:spacetime}
If a Lorentz--Finsler manifold $(M,L)$ admits a smooth timelike vector field $X$,
then $(M,L)$ is said to be \emph{time oriented} (by $X$).
A time oriented Lorentz--Finsler manifold will be called a \emph{Finsler spacetime}.
\end{definition}

In a Finsler spacetime time oriented by $X$,
a causal vector $v \in T_xM$ is said to be \emph{future-directed}
if it lies in the same connected component of $\overline{\Omega'}\!_x \setminus \{0\}$ as $X(x)$.
We denote by $\Omega_x \subset \Omega'_x$ the set of future-directed timelike vectors,
and define
\[ \Omega :=\bigcup_{x \in M} \Omega_x, \qquad
 \overline{\Omega} :=\bigcup_{x \in M} \overline{\Omega}_x. \]
A $C^1$-curve in $(M,L)$ is said to be \emph{timelike} (resp.\ \emph{causal})
if its tangent vector is always timelike (resp.\ causal).
Henceforth, unless explicitly stated otherwise, all causal curves are assumed to be future-directed.

Given $x,y \in M$, we write $x \ll y$ (resp.\ $x<y$)
if there is a timelike (resp.\ causal) curve from $x$ to $y$,
and $x \le y$ means that $x=y$ or $x<y$.
Then we define the \emph{chronological past} and \emph{future} of $x$ by
\[ I^-(x):=\{y \in M \,|\, y \ll x\}, \qquad I^+(x):=\{y \in M \,|\, x \ll y\}, \]
and the \emph{causal past} and \emph{future} of $x$ by
\[ J^-(x):=\{y \in M \,|\, y \le x\}, \qquad J^+(x):=\{y \in M \,|\, x \le y\}. \]

We also define the \emph{time separation} (also called the \emph{Lorentz--Finsler distance}) $\tau(x,y)$ for $x,y \in M$ by
\begin{equation}\label{timesep} \tau(x,y) :=\sup_{\eta} \bL(\eta), \qquad
 \bL(\eta) :=\int_0^1 F\big( \dot{\eta}(t) \big) \,\dd t, \end{equation}
where $\eta\colon [0,1] \lra M$ runs over all causal curves from $x$ to $y$. We set $\tau(x,y):= 0$ if $x \not \leq y$.
A curve $\eta$ attaining the above supremum and having a constant speed (i.e., $F(\dot\eta)$ is constant), which is then a causal \emph{geodesic}, is said to be \emph{maximizing}.
In general, $\tau$ is only lower semi-continuous and can be infinite (see, e.g.,  \cite[Proposition~6.7]{Min-Ray}).
In \emph{globally hyperbolic} Finsler spacetimes (see Definition~\ref{df:gh}), $\tau$ is finite and continuous,
and any pair of points $x,y \in M$ with $x<y$ admits a maximizing geodesic from $x$ to $y$
(see \cite[Propositions~6.8, 6.9]{Min-Ray}).

Next, we introduce the covariant derivative.
Define
\[ \gamma^i_{jk} (v)
 :=\frac{1}{2} \sum_{l=1}^n g^{il}(v)
 \bigg\{ \frac{\del g_{lk}}{\del x^j}(v) +\frac{\del g_{jl}}{\del x^k}(v)
 -\frac{\del g_{jk}}{\del x^l}(v) \bigg\} \]
for $i,j,k =1,\ldots,n$ and $v \in TM\setminus \{0\}$,
where $(g^{ij}(v))$ is the inverse matrix of $(g_{ij}(v))$;
\[ G^i(v) :=\sum_{j,k=1}^n \gamma^i_{jk}(v) v^j v^k, \qquad
N^i_j(v) :=\frac{1}{2}\frac{\del G^i}{\del v^j}(v) \]
for $v \in TM \setminus \{0\}$, $G^i(0)=N^i_j(0):=0$ by convention.
(The above $G^i(v)$ is defined in the same way as  \cite{Obook} and \cite{OSbw} and corresponds to $2G^\alpha(v)$ in  \cite{LMO3}, in order to avoid confusion in the calculations in Subsection~\ref{ssc:Bochner}.)
For later use, note that the homogeneous function theorem yields (cf.\ \cite[Exercise~4.7]{Obook})
\begin{equation}\label{eq:N}
N^i_j(v) =\sum_{k=1}^n \gamma^i_{jk}(v)v^k -\frac{1}{2} \sum_{k,l=1}^n g^{ik}(v) \frac{\del g_{kl}}{\del v^j}(v) G^l(v).
\end{equation}
Moreover, we set
\begin{equation}\label{eq:Gamma}
\Gamma^i_{jk}(v):=\gamma^i_{jk}(v)
 -\frac{1}{2}\sum_{l,m=1}^n g^{il}(v)
 \bigg( \frac{\del g_{lk}}{\del v^m}N^m_j
 +\frac{\del g_{jl}}{\del v^m} N^m_k
 -\frac{\del g_{jk}}{\del v^m} N^m_l \bigg)(v)
\end{equation}
on $TM \setminus \{0\}$,
and the \emph{covariant derivative} of a vector field $X=\sum_{i=1}^n X^i (\del/\del x^i)$ is defined as
\[ D_v^w X :=\sum_{i,j=1}^n
 \Bigg\{ v^j \frac{\del X^i}{\del x^j}(x)
 +\sum_{k=1}^n \Gamma^i_{jk}(w) v^j X^k(x) \Bigg\}
 \frac{\del}{\del x^i} \bigg|_x \]
for $v \in T_xM$ with a \emph{reference vector} $w \in T_xM \setminus \{0\}$.
We remark that the functions $\Gamma^i_{jk}$ in \eqref{eq:Gamma} are the coefficients of the \emph{Chern}(--\emph{Rund}) \emph{connection}.

In the Lorentzian case, $g_{ij}$ is constant in each tangent space (thus, $\Gamma^i_{jk}=\gamma^i_{jk}$) and the covariant derivative does not depend on the choice of a reference vector.
In the Lorentz--Finsler setting, the following class is worth considering.

\begin{definition}[Berwald spacetimes]\label{df:Ber}
A Finsler spacetime $(M,L)$ is said to be of \emph{Berwald type} (or called a \emph{Berwald spacetime}) if $\Gamma^i_{jk}$ is constant on the slit tangent space $T_xM \setminus \{0\}$ for any $x$ in the domain of every local coordinate system.
\end{definition}

By definition, the covariant derivative on a Berwald spacetime is defined independently of the choice of a reference vector.
An important property of a Berwald spacetime is that, for any $C^1$-curve $\eta\colon [0,1] \lra M$ whose velocity does not vanish, the parallel transport along $\eta$ gives a linear isometry from $(T_{\eta(0)}M,L)$ to $(T_{\eta(1)}M,L)$ (see, e.g.,  \cite[Proposition~6.5]{Obook} for the positive definite case).
In particular, all tangent spaces are mutually linearly isometric.
Examples of Berwald spacetimes include Lorentzian manifolds
and flat Lorentz--Finsler structures of $\R^n$
(every tangent space $T_x\R^n$ is canonically isometric to $T_0\R^n$ and $\Gamma^i_{jk}=\gamma^i_{jk}=0$).
We refer to \cite{FP,GT,FPP,CapMas20,FHPV,HPV} for some mathematical and physical investigations on Berwald spacetimes,
and to \cite[Chapter~10]{BCS}, \cite[\S\S 6.3, 10.2]{Obook} for the positive definite case.

\begin{remark}[Metrizability]\label{rm:met}
In the positive definite case,
Szab\'o showed that a Finsler manifold of Berwald type $(M,F)$ admits a Riemannian metric $h$ whose Levi-Civita connection coincides with the Chern connection of $F$,
i.e., the Christoffel symbols of $h$ coincide with $\Gamma^i_{jk}$ of $F$
(see  \cite{Sz} and  \cite[Exercise~10.1.4]{BCS}).
This is called the (\emph{Riemannian}) \emph{metrizability theorem}.
It is not known whether the metrizability can be generalized to Berwald spacetimes.
In \cite{FHPV}, some counter-examples were constructed for Lorentz--Finsler structures defined only on a subset of $TM$.
Their discussion is not applicable to Lorentz--Finsler structures defined on all of $TM$ as in Definition~\ref{df:LFstr}.
\end{remark}

The \emph{geodesic equation} is written as $D^{\dot{\eta}}_{\dot{\eta}}\dot{\eta} \equiv 0$.
The \emph{exponential map} is defined in the same way as the Riemannian case,
which is $C^{\infty}$ on a neighborhood of the zero section only in Berwald spacetimes
(see, e.g., \cite{Min-coord}, as well as
\cite[Exercise~5.3.5]{BCS} in the positive definite case).
For $C^1$-vector fields $X,Y$ along a causal geodesic $\eta$, we have
\begin{equation}\label{eq:g_eta}
\frac{\dd}{\dd t}\bigl[ g_{\dot{\eta}}(X,Y) \bigr] =g_{\dot{\eta}}(D^{\dot{\eta}}_{\dot{\eta}}X,Y) +g_{\dot{\eta}}(X,D^{\dot{\eta}}_{\dot{\eta}}Y)
\end{equation}
and, if $X$ is nowhere vanishing,
\begin{equation}\label{eq:g_X}
\frac{\dd}{\dd t}\bigl[ g_X(X,Y) \bigr] =g_X(D^X_{\dot{\eta}}X,Y) +g_X(X,D^X_{\dot{\eta}}Y)
\end{equation}
(see, e.g.,  \cite[(3.1), (3.2)]{LMO1}).

A $C^{\infty}$-vector field $J$ along a geodesic $\eta$ is called a \emph{Jacobi field}
if it satisfies the \emph{Jacobi equation} $D^{\dot{\eta}}_{\dot{\eta}} D^{\dot{\eta}}_{\dot{\eta}} J +R_{\dot{\eta}}(J) =0$,
where
\[ R_v(w):=\sum_{i,j=1}^n R^i_j(v) w^j \frac{\del}{\del x^i} \bigg|_x \]
for $v,w \in T_xM$ and
\[ R^i_j(v) :=\frac{\del G^i}{\del x^j}(v)
 -\sum_{k=1}^n \bigg\{ \frac{\del N^i_j}{\del x^k}(v) v^k
 -\frac{\del N^i_j}{\del v^k}(v) G^k(v) +N^i_k(v) N^k_j(v) \bigg\} \]
is the \emph{curvature tensor}.
A Jacobi field is also characterized as the variational vector field of a geodesic variation.

Note that $R_v(w)$ is linear in $w$, thus $R_v \colon T_xM \lra T_xM$ is an endomorphism
for each $v \in T_xM$.
For $v \in \overline{\Omega}_x$, we define the \emph{Ricci curvature} of $v$ as the trace of $R_v$: $\Ric(v):=\trace(R_v)$.
We remark that $\Ric(cv)=c^2 \Ric(v)$ for $c>0$.

\subsection{Legendre transforms and differential operators}\label{ssc:Lap}

In order to introduce the d'Alembertian, we consider the dual structure to $L$ and the Legendre transform
(see  \cite{Min-cone}, \cite[\S 3.1]{Min-causality},  \cite[\S 4.4]{LMO2} for further discussions, and  \cite[\S 3.2]{Obook} for the positive definite case).
Define the \emph{polar cone} to $\Omega_x \subset T_xM$ by
\[ \Omega^*_x :=\big\{ \omega \in T_x^*M \mid
 \omega(v)<0\ \text{for all}\ v \in \overline{\Omega}_x \setminus \{0\} \big\}. \]
This is an open convex cone in $T_x^*M$.
For $\omega \in \ol{\Omega}{}^*_x$, we define
\[ L^*(\omega)
 := -\frac{1}{2} \biggl( \sup_{v \in \Omega_x \cap F^{-1}(1)} \omega(v) \biggr)^2
 = -\frac{1}{2} \inf_{v \in \Omega_x \cap F^{-1}(1)} \bigl( \omega(v) \bigr)^2, \]
in other words, setting $F^*(\omega):=\sqrt{-2L^*(\omega)}$ for $\omega \in \ol{\Omega}{}^*$,
\[ F^*(\omega) = -\sup_{v \in \Omega_x \cap F^{-1}(1)} \omega(v). \]
Then we have the \emph{reverse Cauchy--Schwarz inequality}
\begin{equation}\label{eq:rCS}
\sqrt{4L^*(\omega) L(v)} =F^*(\omega)F(v) \le -\omega(v)
\end{equation}
for any $v \in \ol{\Omega}_x$ and $\omega \in \ol{\Omega}{}^*_x$.

\begin{definition}[Legendre transform]\label{df:Leg}
Define the \emph{Legendre transform}
$\LL^* \colon \Omega^*_x \lra \Omega_x$ as the map sending $\omega \in \Omega^*_x$
to the unique element $v \in \Omega_x$ satisfying $L(v)=L^*(\omega)=\omega(v)/2$. 
\end{definition}

We can continuously extend to $\LL^* \colon \ol{\Omega}{}^*_x \lra \ol{\Omega}_x$ (then $\LL^*(0)=0$), and equality holds in \eqref{eq:rCS} if and only if $v$ and $\LL^*(\omega)$ are proportional (see \cite[Theorem~3]{Min-cone}).

A coordinate expression of the Legendre transform is given by
\begin{equation}\label{eq:Leg}
\LL^*(\omega)
 =\sum_{i=1}^n \frac{\del L^*}{\del \omega_i}(\omega) \frac{\del}{\del x^i}
 =-\frac{1}{2} \sum_{i=1}^n \frac{\del[(F^*)^2]}{\del \omega_i}(\omega) \frac{\del}{\del x^i},
\end{equation}
where $\omega =\sum_{i=1}^n \omega_i \,\dd x^i$.
We set
\[ g^*_{ij}(\omega) :=\frac{\del^2 L^*}{\del \omega_i \del \omega_j}(\omega) \]
and note that, by the homogeneous function theorem,
\begin{equation}\label{eq:Euler}
\sum_{k=1}^n \frac{\del g^*_{ij}}{\del \omega_k}(\omega) \omega_k
=\sum_{k=1}^n \frac{\del g^*_{ik}}{\del \omega_j}(\omega) \omega_k
=0
\end{equation}
for all $i,j=1,\ldots,n$.
We also remark that $\LL^*\colon \Omega^*_x \lra \Omega_x$ is a bijection and $(g^*_{ij}(\omega))$ is the inverse matrix of $(g_{ij}(\LL^*(\omega)))$ (see \cite[Corollary~4]{Min-cone}).

A continuous function $f\colon M \lra \R$ is called a \emph{time function}
if $f(x)<f(y)$ for all $x,y \in M$ with $x<y$.
A $C^1$-function $f\colon M \lra \R$ is said to be \emph{temporal} if $-\dd f(x) \in \Omega^*_x$ for all $x \in M$.
Note that a temporal function is a time function.

For a temporal function $f\colon M \lra \R$,
define the \emph{gradient vector} of $-f$ at $x \in M$ by
\begin{equation}\label{eq:grad}
\nabla(-f)(x) :=\LL^* \bigl( -\dd f(x) \bigr)
 =\sum_{i,j=1}^n g^*_{ij}\bigl( -\dd f(x) \bigr)
 \frac{\del(-f)}{\del x^j}(x) \frac{\del}{\del x^i} \in \Omega_x.
\end{equation}
Observe from \eqref{eq:g_v} and \eqref{eq:Leg} that
$g_{\nabla(-f)}(\nabla(-f)(x),v) =-\dd f(v)$ holds for any $v \in T_xM$.
For a $C^2$-temporal function $f\colon M \lra \R$ and $x \in M$, its \emph{Hessian} $\nabla^2 (-f)\colon T_xM \lra T_xM$ is defined by
\begin{equation*}
\nabla^2 (-f)(v) :=D^{\nabla(-f)}_v \bigl[ \nabla(-f) \bigr].
\end{equation*}
This spacetime Hessian has the following symmetry (see, e.g., \cite[Lemma~4.12]{LMO2}):
\begin{equation}\label{eq:symm}
g_{\nabla(-f)}\bigl( \nabla^2 (-f)(v),w \bigr) =g_{\nabla(-f)} \bigl( v,\nabla^2 (-f)(w) \bigr)
\end{equation}
for all $v,w \in T_xM$.
Then we define the \emph{d'Alembertian} (or the \emph{spacetime Laplacian})
as the trace of the Hessian:
\begin{equation*}
\square(-f) :=\trace \bigl[ \nabla^2 (-f) \bigr].
\end{equation*}
We remark that the d'Alembertian $\square$ is not elliptic but hyperbolic,
and is nonlinear in our Finsler setting (since the Legendre transform is nonlinear).

\begin{remark}\label{rm:Hess}
For $v \in T_xM$ and the geodesic $\eta\colon (-\ve,\ve) \lra M$ with $\dot{\eta}(0)=v$,
the second order derivative $(-f \circ \eta)''(0)$ does not coincide with $g_{\nabla(-f)}(\nabla^2 (-f)(v),v)$ in general.
They coincide in Berwald spacetimes thanks to the fiberwise constancy of the connection coefficients $\Gamma^i_{jk}$
(see \cite[\S 12.1]{Obook} for the positive definite case).
\end{remark}

\subsection{Weighted Finsler spacetimes}\label{ssc:wLF}

There are two ways of putting a \emph{weight} on a Finsler spacetime $(M,L)$.
The first one, reasonable from the point of view of geometric analysis, is to choose a positive $C^\infty$-measure $\fm$ on $M$.
The other possibility is to employ a $C^{\infty}$-function $\Psi \colon M \lra \R$.
They are mutually equivalent in the Lorentzian case via the relation $\fm=\e^{-\Psi} \vol_g$, where $\vol_g$ is the volume measure induced from the Lorentzian metric $g$.
In the Lorentz--Finsler setting, however, we do not have a canonical measure like $\vol_g$ (see \cite{ORand,Obook} for the positive definite case).
In this article, we consider the first way because we will make use of the linear divergence operator induced by a measure.

\begin{remark}\label{rm:meas}
One way to unify the above two cases is to consider a general $0$-homogeneous function
$\psi\colon \ol{\Omega} \setminus \{0\} \lra \R$ as in  \cite{LMO1,LMO2}.
However, even a weighted Laplacian on a Riemannian manifold corresponding to such a general function $\psi$ is yet to be developed (see the end of \cite[\S 3.3]{LMO2}).
\end{remark}

Define a function $\psi_{\fm} \colon \ol{\Omega} \setminus \{0\} \lra \R$ associated with a positive $C^\infty$-measure $\fm$ on $M$ (i.e., its density function is positive and $C^\infty$ in each local chart) by
\[ \fm(\dd x) =\e^{-\psi_{\fm} \circ \dot{\eta}} \sqrt{-\det\bigl[ \bigl( g_{ij}(\dot{\eta}) \bigr) \bigr]}
 \,\dd x^1 \cdots \dd x^n \]
along causal geodesics $\eta$.
In other words, given a causal vector field $V$ on an open set $U \subset M$
such that every integral curve of $V$ is a geodesic,
we have $\fm =\e^{-\psi_\fm \circ V} \vol_{g_V}$ on $U$.

\begin{definition}[Weighted Ricci curvature]\label{df:wRic}
Given $v \in \overline{\Omega} \setminus \{0\}$,
let $\eta\colon (-\ve,\ve) \lra M$ be the causal geodesic with $\dot{\eta}(0)=v$.
Then we define the \emph{weighted Ricci curvature} by
\begin{equation*}
\Ric_N(v) :=\Ric(v) +(\psi_\fm \circ \dot{\eta})''(0) -\frac{(\psi_\fm \circ \dot{\eta})'(0)^2}{N-n}
\end{equation*}
for $N \in \R \setminus \{n\}$.
We also define $\Ric_{\infty}(v) :=\Ric(v) +(\psi_\fm \circ \dot{\eta})''(0)$,
$\Ric_n(v) :=\lim_{N \downarrow n} \Ric_N(v)$, and $\Ric_N(0):=0$.
\end{definition}

We remark that, in the positive definite case, $(\psi_\fm \circ \dot{\eta})'(0)$ is called the \emph{$\mathbf{S}$-curvature} at $v$ associated with $\fm$ (see \cite{Obook,Shlec}).

We will say that $\Ric_N \ge K$ holds \emph{in timelike directions} for some $K \in \R$
if we have $\Ric_N(v) \ge KF^2(v) =-2KL(v)$ for all $v \in \Omega$ (recall \eqref{eq:LtoF} for $F$).
By definition, we have the monotonicity
\[ \Ric_n(v) \le \Ric_N(v) \le \Ric_{\infty}(v) \le \Ric_{N'}(v) \]
for $N \in (n,\infty)$ and $N' \in (-\infty,n)$.
The following notion was introduced in \cite{LMO1}.

\begin{definition}[$\ez$-range]\label{df:eran}
Given $N \in (-\infty,1] \cup [n,\infty]$, we consider $\ez \in \R$ in the following
\emph{$\ez$-range}:
\begin{equation}\label{eq:erange}
\epsilon=0 \,\text{ for } N=1, \qquad
 |\epsilon| < \sqrt{\frac{N-1}{N-n}} \,\text{ for } N \neq 1,n, \qquad
 \ez \in \R \,\text{ for } N=n.
\end{equation}
The associated constant $c =c(N,\ez)$ is defined as
\begin{equation}\label{eq:c}
c(N,\ez):= \frac{1}{n}\bigg( 1-\ez^2\frac{N-n}{N-1} \bigg) >0 \,\text{ for } N \neq 1, \qquad
 c(1,0):= \frac{1}{n}.
\end{equation}
\end{definition}

Note that $\ez=1$ is admissible only for $N \in [n,\infty)$, while $\ez=0$ is always admissible.
By the $\ez$-range, we can unify results for constant and variable curvature bounds
into a single framework.
See \cite{LMO1} for singularity theorems and \cite{LMO2} for comparison theorems.

A d'Alembertian comparison theorem with $\ez$-range was established in \cite{LMO2} as follows.
For a temporal function $f\colon M \lra \R$ and $x \in M$, we define the \emph{weighted d'Alembertian} associated with $\fm$ by
\begin{equation}\label{eq:wdA}
\square_\fm (-f)(x) :=\square(-f)(x) -(\psi_\fm \circ \dot{\eta})'(0),
\end{equation}
where $\eta\colon (-\ve,\ve) \lra M$ is the timelike geodesic with $\dot{\eta}(0)=\nabla(-f)(x)$.
One can alternatively define $\square_\fm(-f)$ as $\div_\fm(\nabla(-f))$; the \emph{divergence} $\div_\fm$ with respect to the measure $\fm$ (independent of $L$) is given in coordinates by
\begin{equation}\label{eq:div}
\div_\fm V :=\sum_{i=1}^n \bigg\{ \frac{\del V^i}{\del x^i} +V^i \frac{\del\Phi}{\del x^i} \bigg\}
\end{equation}
for differentiable vector fields $V$, where $\fm=\e^\Phi \,\dd x^1 \cdots \dd x^n$.
For test functions $\phi \in C^\infty_c(M)$ of compact support, we have  the integration by parts formula:
\[ \int_M \phi \cdot \square_\fm (-f) \,\dd\fm
 =-\int_M \dd\phi \bigl( \nabla(-f) \bigr) \,\dd\fm. \]
The equivalence of these two definitions of $\square_\fm(-f)$ can be seen, for example, in the same way as the positive definite case (see \cite[Lemma~12.4]{Obook}).

The \emph{d'Alembertian} (\emph{Laplacian}) \emph{comparison theorem} (\cite[Theorem~5.8]{LMO2}, \cite[Theorem~3.1]{LMO3}) for nonnegatively curved spacetimes asserts the following.

\begin{theorem}[d'Alembertian comparison theorem]\label{th:Lcomp}
Let $(M,L,\fm)$ be a weighted Finsler spacetime of dimension $n \ge 2$,
$N \in (-\infty,1] \cup [n,\infty]$, and $\ez \in \R$ belong to the $\ez$-range \eqref{eq:erange}.
Suppose that $\Ric_N \ge 0$ holds in timelike directions.
Then, for any $z \in M$, the time separation function $f(x):=\tau(z,x)$ satisfies
\begin{equation}\label{eq:Lcomp}
\square_\fm (-f) \big( \eta(t) \big)
 \le \e^{\frac{2(\ez -1)}{n-1} \psi_\fm(\dot{\eta}(t))}
 \bigg( c\int_0^t \e^{\frac{2(\ez -1)}{n-1} \psi_\fm(\dot{\eta}(s))} \,\dd s \bigg)^{-1}
\end{equation}
for any maximizing timelike geodesic $\eta\colon [0,T) \lra M$ of unit speed
$($i.e., $F(\dot{\eta})\equiv 1)$ emanating from $z$
and any $t \in (0,T)$, where $c=c(N,\ez)$ as in \eqref{eq:c}.
\end{theorem}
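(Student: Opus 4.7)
The plan is to derive a scalar Riccati-type inequality for $\square_\fm(-f)$ along $\eta$ and then integrate it as a one-variable ODE. Since $\eta$ is a maximizing unit-speed timelike geodesic from $z$, the time separation $f:=\tau(z,\cdot)$ is smooth on a neighborhood of $\eta((0,T))$ (no cut points occur in the interior of a maximizing geodesic), and $\nabla(-f)\circ\eta=\dot\eta$. Hence the Hessian endomorphism $A(t):=\nabla^2(-f)$ at $\eta(t)$ is well defined; since $f\circ\eta(t)=t$ one has $A(t)\dot\eta(t)=0$, and by the symmetry \eqref{eq:symm} the map $A(t)$ is $g_{\dot\eta(t)}$-self-adjoint and preserves the $(n-1)$-dimensional $g_{\dot\eta(t)}$-orthogonal complement of $\dot\eta(t)$. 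On that subspace, expressing $A$ in a basis of Jacobi fields vanishing at $z$ yields the matrix Riccati equation $A'(t)+A(t)^2+R_{\dot\eta(t)}=0$.

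Taking the trace and applying Cauchy--Schwarz, $\trace(A(t)^2)\ge(\trace A(t))^2/(n-1)$, together with $\trace R_{\dot\eta}=\Ric(\dot\eta)$, the scalar $h(t):=\square(-f)(\eta(t))=\trace A(t)$ obeys $h'(t)+h(t)^2/(n-1)+\Ric(\dot\eta(t))\le 0$. Substituting $h=h_\fm+(\psi_\fm\circ\dot\eta)'$ via \eqref{eq:wdA}, invoking Definition~\ref{df:wRic} to pass from $\Ric$ to $\Ric_N$, and using $\Ric_N\ge 0$, I obtain
\begin{equation*}
h_\fm'(t)+\frac{\bigl(h_\fm(t)+(\psi_\fm\circ\dot\eta)'(t)\bigr)^2}{n-1}+\frac{\bigl((\psi_\fm\circ\dot\eta)'(t)\bigr)^2}{N-n}\le 0.
\end{equation*}
Expanding the square reveals a quadratic form in the two variables $h_\fm(t)$ and $(\psi_\fm\circ\dot\eta)'(t)$; a weighted estimate calibrated to the admissible $\ez$-range \eqref{eq:erange} bounds the left-hand side from below by $h_\fm'(t)+c\,h_\fm(t)^2-\frac{2(\ez-1)}{n-1}(\psi_\fm\circ\dot\eta)'(t)\,h_\fm(t)$ with $c=c(N,\ez)$ as in \eqref{eq:c}. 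Multiplying through by the integrating factor $e^{-\frac{2(\ez-1)}{n-1}\psi_\fm(\dot\eta(t))}$ and setting $\tilde h(t):=h_\fm(t)\,e^{-\frac{2(\ez-1)}{n-1}\psi_\fm(\dot\eta(t))}$ collapses the inequality into the scalar Riccati form
\begin{equation*}
\tilde h'(t)+c\,e^{\frac{2(\ez-1)}{n-1}\psi_\fm(\dot\eta(t))}\,\tilde h(t)^2\le 0.
\end{equation*}

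Finally, integrating this scalar Riccati inequality from $0^+$ to $t$ with the initial blow-up $\tilde h(s)\to+\infty$ as $s\to 0^+$ (forced by the focusing of $f$ at $z$, where $h(s)\sim(n-1)/s$) gives $1/\tilde h(t)\ge c\int_0^t e^{\frac{2(\ez-1)}{n-1}\psi_\fm(\dot\eta(s))}\,\dd s$, which is exactly \eqref{eq:Lcomp} after reversing the change of variables. The main obstacle is the algebraic step invoking the $\ez$-range: one must verify that \eqref{eq:erange} is precisely the discriminant condition making the auxiliary quadratic form in $(h_\fm,(\psi_\fm\circ\dot\eta)')$ nonnegative with the constant $c(N,\ez)$ of \eqref{eq:c}, uniformly across the two regimes $N\in[n,\infty]$ and $N\in(-\infty,1]$, including the limits $N=n$ and $N=\infty$. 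A secondary Finsler-specific point is that the Hessian and the Jacobi equation are both built from the Chern connection with reference vector $\nabla(-f)=\dot\eta$, so the Riemannian derivation of the Riccati equation carries over along $\eta$ unchanged, but this compatibility must be checked explicitly.
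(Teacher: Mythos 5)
Your proof is correct, and it follows the same route that the paper relies upon: this theorem is not proved from scratch in the present article but is cited from \cite[Theorem~5.8]{LMO2} and \cite[Theorem~3.5]{LMO3}, and Remark~\ref{rm:Raych} explicitly says the d'Alembertian comparison theorem follows from a Raychaudhuri (i.e., trace Riccati) argument for the distance function from a point. That is precisely what you do: along the maximizing geodesic, $\nabla^2(-f)$ annihilates $\dot\eta$ and is $g_{\dot\eta}$-self-adjoint, so it restricts to an endomorphism of the $(n-1)$-dimensional spacelike complement, where $g_{\dot\eta}$ is positive definite and Cauchy--Schwarz $\trace(A^2)\ge(\trace A)^2/(n-1)$ applies (this is the part that would fail for the full Bochner identity on $T_xM$, which the paper flags in Remark~\ref{rm:Raych}).

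Both of the points you flag as potential obstacles do in fact check out. For the $\ez$-range calibration, subtracting your claimed lower bound from the exact expression leaves the quadratic form
\[
\Bigl(\tfrac{1}{n-1}-c\Bigr)a^2+\tfrac{2\ez}{n-1}\,ab+\tfrac{N-1}{(n-1)(N-n)}\,b^2
\]
in $a=h_\fm$, $b=(\psi_\fm\circ\dot\eta)'$. With $c=c(N,\ez)$ from \eqref{eq:c}, the coefficient of $a^2$ simplifies to $\tfrac{1}{n(n-1)}+\tfrac{\ez^2(N-n)}{n(N-1)}$, and the discriminant condition reduces exactly to $\ez^2\le(N-1)/(N-n)$, i.e.\ the $\ez$-range \eqref{eq:erange}; both the $a^2$- and $b^2$-coefficients are nonnegative throughout $N\in(-\infty,1]\cup[n,\infty]$, so nonnegativity of the form follows. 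For the Finsler-specific compatibility, the Hessian $\nabla^2(-f)$ and the Jacobi operator $R_{\dot\eta}$ are both defined through the Chern connection with reference vector $\nabla(-f)=\dot\eta$ along $\eta$ (this is the content of \eqref{eq:symm} and the Jacobi-field definition in Subsection~\ref{ssc:Finsler}), so the Riccati ODE on the orthogonal complement carries over verbatim. The remaining steps — substitution via \eqref{eq:wdA} (noting the cancellation of $(\psi_\fm\circ\dot\eta)''$), integrating factor $e^{-\frac{2(\ez-1)}{n-1}\psi_\fm\circ\dot\eta}$, and integration using the blow-up $h_\fm(t)\sim(n-1)/t$ as $t\to0^+$ — are all standard and correctly carried out, including the minor caveat that once $\tilde h$ reaches $0$ it stays $\le0$ and the conclusion holds trivially.
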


In fact, in \cite{LMO2}, a general $0$-homogeneous function was considered as a weight (recall Remark~\ref{rm:meas}).
(Recall also that $\dim M=n+1$ in \cite{LMO2,LMO3}.)
We refer to  \cite{Ca} and  \cite{WW} for the weighted Lorentzian case.

We will also need the reverse version of Theorem~\ref{th:Lcomp}.
The \emph{reverse Lorentz--Finsler structure} of $L$ is defined as $\rev{L}(v):=L(-v)$,
and we put an arrow $\leftarrow$ on a quantity associated with $\rev{L}$.
The Lorentz--Finsler manifold $(M,\rev{L})$ is time oriented by $-X$,
so that $\rev{\Omega}=-\Omega$.
The corresponding weighted d'Alembertian satisfies $\rev{\square}_\fm f=-\square_\fm(-f)$
for temporal functions $f$
(since $\rev{\psi}_{\fm}(v)=\psi_{\fm}(-v)$).
We also remark that $\rev{\Ric}_N(v)=\Ric_N(-v)$ for $v \in \rev{\Omega}$, and hence
the timelike curvature bound $\Ric_N \ge 0$ is equivalent to $\rev{\Ric}_N \ge 0$.

\begin{corollary}[Reverse version]\label{cr:Lcomp}
Let $(M,L,\fm)$, $N$ and $\epsilon$ be as in Theorem~$\ref{th:Lcomp}$.
Then, for any $z \in M$, the function $\ol{f}(x):=\tau(x,z)$ satisfies
\begin{equation}\label{eq:Lcomp'}
\rev{\square}_\fm (-\ol{f}) \big( \eta(-t) \big)
 =-\square_\fm \ol{f} \big( \eta(-t) \big)
 \le \e^{\frac{2(\ez -1)}{n-1} \psi_\fm(\dot{\eta}(-t))}
 \bigg( c\int_{-t}^0 \e^{\frac{2(\ez -1)}{n-1} \psi_\fm(\dot{\eta}(s))} \,\dd s \bigg)^{-1}
\end{equation}
for any maximizing timelike geodesic $\eta\colon (-T,0] \lra M$ of unit speed with $\eta(0)=z$
and any $t \in (0,T)$.
\end{corollary}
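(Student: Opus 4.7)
The plan is to derive Corollary~\ref{cr:Lcomp} as a direct application of Theorem~\ref{th:Lcomp} to the reverse weighted Finsler spacetime $(M,\rev{L},\fm)$. The key conceptual point is that $\ol{f}(x)=\tau(x,z)$ is precisely the forward time separation from $z$ computed in the reversed causal structure, i.e.\ $\ol{f}=\rev{\tau}(z,\cdot)$, which is exactly the object to which Theorem~\ref{th:Lcomp} applies once one passes to $\rev{L}$.

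First I would set $\gamma(t):=\eta(-t)$ for $t\in[0,T)$ and verify that $\gamma$ is a future-directed, unit-speed, maximizing $\rev{L}$-geodesic emanating from $z=\gamma(0)$. Unit speed is immediate from $\rev{F}(w)=F(-w)$ combined with $F(\dot\eta)\equiv 1$; the $\rev{L}$-geodesic equation is satisfied because $v\longmapsto -v$ sends $L$-geodesics to $\rev{L}$-geodesics; and the maximizing property follows from $\rev{\tau}(z,\gamma(t))=\tau(\eta(-t),z)=t$, the last equality being the maximizing hypothesis on $\eta$ together with unit speed. The curvature hypothesis $\Ric_N\ge 0$ in $L$-timelike directions is equivalent to $\rev{\Ric}_N\ge 0$ in $\rev{L}$-timelike directions (already recorded in the excerpt), so the hypotheses of Theorem~\ref{th:Lcomp} hold for $(M,\rev{L},\fm)$.

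Next I would apply Theorem~\ref{th:Lcomp} in $(M,\rev{L},\fm)$ with the point $z$, the maximizing unit-speed $\rev{L}$-geodesic $\gamma$, and the time-separation function $\rev{\tau}(z,\cdot)=\ol{f}$, obtaining
\[ \rev{\square}_\fm(-\ol{f})\bigl(\gamma(t)\bigr) \le \e^{\frac{2(\ez-1)}{n-1}\rev{\psi}_\fm(\dot\gamma(t))} \bigg(c\int_0^t \e^{\frac{2(\ez-1)}{n-1}\rev{\psi}_\fm(\dot\gamma(s))}\,\dd s\bigg)^{-1} \]
for $t\in(0,T)$. Translating back to the original structure is then pure bookkeeping: using $\rev{\psi}_\fm(w)=\psi_\fm(-w)$ and $\dot\gamma(s)=-\dot\eta(-s)$ yields $\rev{\psi}_\fm(\dot\gamma(s))=\psi_\fm(\dot\eta(-s))$, after which the change of variable $u=-s$ rewrites the integral over $[0,t]$ as the integral over $[-t,0]$ appearing in \eqref{eq:Lcomp'}. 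The remaining sign identity $\rev{\square}_\fm(-\ol{f})=-\square_\fm\ol{f}$ is exactly the relation $\rev{\square}_\fm f=-\square_\fm(-f)$ quoted immediately before the corollary, applied with $f=-\ol{f}$ (which is temporal for $L$ because $\dd\ol{f}\in\Omega^*$, $\ol{f}$ being strictly decreasing along $L$-future-directed curves reaching $z$).

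The only potential obstacle is careful bookkeeping of signs and orientations in the passage between $L$ and $\rev{L}$—ensuring that each of the transformations (of $F$, of $\psi_\fm$, of the Legendre transform behind $\square_\fm$, and of the velocities along $\eta$) picks up the correct minus sign—but each of these is already recorded in the excerpt, and no new analytic ingredient beyond Theorem~\ref{th:Lcomp} is required.
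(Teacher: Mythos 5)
Your proof is correct and takes the same approach the paper intends: the corollary is stated without an explicit proof precisely because the paragraph preceding it establishes all the needed reverse-structure facts ($\rev{\square}_\fm f = -\square_\fm(-f)$, $\rev{\Ric}_N(v) = \Ric_N(-v)$), and the sentence immediately after it ("Note that the reverse curve $\bar\eta(t):=\eta(-t)$ is a maximizing timelike geodesic of unit speed with respect to $\rev L$") is exactly the key observation you verify. Your bookkeeping of signs, the change of variable $u=-s$, and the identification $\ol f=\rev\tau(z,\cdot)$ are all correct.
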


Note that the reverse curve $\bar{\eta}(t):=\eta(-t)$ is a maximizing timelike geodesic of unit speed with respect to $\rev{L}$.
In the application of the d'Alembertian comparison theorem to splitting theorems,
we need to require that the right-hand side of \eqref{eq:Lcomp} (and \eqref{eq:Lcomp'}) tends to $0$.
To be precise, we will assume the following completeness condition introduced in \cite{LMO1}, adapted to our usage in splitting theorems.

\begin{definition}[$\ez$-completeness]\label{df:cmplt}
A weighted Finsler spacetime $(M,L,\fm)$ is said to be \emph{future timelike $\ez$-complete} if, given any future complete maximizing timelike geodesic $\eta\colon [0,\infty) \lra M$, there is a neighborhood $U$ of $\eta(0)$ such that, for any sequence $t_k \to \infty$ admitting maximizing geodesics $\zeta_k\colon [0,\tau(x,\eta(t_k))] \lra M$ from $x \in U$ to $\eta(t_k)$ of unit speed, we have
\[ \lim_{k \to \infty} \int_0^{\tau(x,\eta(t_k))} \e^{\frac{2(\ez -1)}{n-1} \psi_\fm(\dot{\zeta}_k(s))} \,\dd s =\infty. \]
The \emph{past timelike $\ez$-completeness} will mean the future timelike $\ez$-completeness of $(M,\rev{L},\fm)$.
We say that $(M,L,\fm)$ is \emph{timelike $\ez$-complete} if it is both future and past timelike $\ez$-complete.
\end{definition}

Note that $\tau(x,\eta(t_k)) \to \infty$ by the reverse triangle inequality, and thus the timelike $1$-completeness always holds.
Timelike $0$-completeness recovers the $\psi_\fm$-completeness introduced in  \cite{Wy}
in the Riemannian case (see  \cite{WW} for the Lorentzian case).
A typical situation is that $\ez<1$ and $\psi_\fm$ is bounded above (see  \cite{Ca}, where $N=\infty$).

In the timelike geodesically complete case (see Definition~\ref{df:gcmplt}), for our purpose, it is in fact sufficient to assume
\begin{equation}\label{eq:cmplt}
\int_0^\infty \e^{\frac{2(\ez -1)}{n-1} \psi_\fm(\dot{\eta}(s))} \,\dd s =\infty
\end{equation}
for every future complete timelike geodesic $\eta\colon [0,\infty) \lra M$.

\section{The $p$-d'Alembert operator}\label{sc:p-dA}

This section is devoted to a key new ingredient, the $p$-d'Alembertian, which is a modification of the d'Alembertian \eqref{eq:wdA} in a similar manner to the $p$-Laplacian in the positive definite case (for $p>1$), but with $p<1$.
We refer to \cite{octet,Br,quintet} and \cite{McCann:2020,MS:22} for the (possibly singular) Lorentzian setting.

We first introduce the \emph{$q$-Lagrangian} $L_q$ as in  \cite{McCann:2020} and  \cite{BO} with $q \in (0,1)$ (see also  \cite{Suhr} for the case $q=1$ and  \cite[\S 3.1]{Min-causality} for a more general study including the Finsler case).
Define $L_q\colon TM \lra (-\infty,0] \cup \{\infty\}$ as
\begin{equation*}
L_q(v) := \begin{cases}
 -\dfrac{1}{q} \bigl( -2L(v) \bigr)^{q/2} & \text{if } v \in \overline{\Omega}, \\
 \infty & \text{otherwise}.
\end{cases}
\end{equation*}
Observe that $L_q(v) =-F(v)^q/q$ for $v \in \overline{\Omega}$.
We remark that  $L$  is not convex in the radial direction,
while  for $L_q$ the following holds (see, e.g., \cite[Lemma~3.1]{BO}):
\begin{lemma}\label{lm:Lag}
For any $q \in (0,1)$ and $x \in M$, the Lagrangian $L_q$ is convex on $T_xM$ and strictly convex on $\Omega_x$.
\end{lemma}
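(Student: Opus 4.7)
My plan is to reduce the claim to verifying that the fiber-wise Hessian of $L_q$ is positive definite on the open cone $\Omega_x$, and then to extend this to all of $T_xM$ via the convex-analytic convention that $L_q \equiv \infty$ off $\overline{\Omega}_x$. Specifically, if $v_0,v_1 \in T_xM$ and either lies outside $\overline{\Omega}_x$, then the convexity inequality is trivial because the right-hand side is $+\infty$. If both lie in $\overline{\Omega}_x$, then the whole segment lies in $\overline{\Omega}_x$ because $\overline{\Omega}_x$ is a convex cone, so the statement reduces to convexity on $\overline{\Omega}_x$. Since $L_q$ is continuous on $\overline{\Omega}_x$ and smooth on the interior, it is enough to prove that the Hessian of $L_q$ at every $v \in \Omega_x$ is positive definite; strict convexity on $\Omega_x$ will then follow, and convexity on $\overline{\Omega}_x$ by continuity.

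For the Hessian computation, I work in fiber-wise linear coordinates on $T_xM$. Using $L_q = -q^{-1}(-2L)^{q/2}$ and Euler's identity $\partial L/\partial v^i = g_{ij}(v)v^j$ (from the $2$-homogeneity of $L$), a direct differentiation yields
\[
\frac{\partial^2 L_q}{\partial v^i \partial v^j}(v)
= \bigl(-2L(v)\bigr)^{q/2-1}\!\left[g_{ij}(v) + \frac{2-q}{-2L(v)}\,\frac{\partial L}{\partial v^i}(v)\frac{\partial L}{\partial v^j}(v)\right],
\]
so that for an arbitrary test vector $w \in T_xM$,
\[
\sum_{i,j}\frac{\partial^2 L_q}{\partial v^i \partial v^j}(v)\, w^i w^j
= \bigl(-2L(v)\bigr)^{q/2-1}\!\left[g_v(w,w) + \frac{2-q}{-2L(v)}\,g_v(v,w)^2\right].
\]

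To check that the bracket is strictly positive for $w \neq 0$, I decompose $w = \alpha v + u$ with $g_v(v,u)=0$, which is possible because $g_v(v,v) = 2L(v) < 0$ so $v$ spans a timelike line for $g_v$. Since $g_v$ has signature $(-,+,\ldots,+)$, the orthogonal complement $v^{\perp}$ is spacelike, i.e.\ $g_v|_{v^{\perp}}$ is positive definite. Substituting $g_v(w,w) = 2\alpha^2 L(v) + g_v(u,u)$ and $g_v(v,w) = 2\alpha L(v)$ and simplifying gives
\[
g_v(w,w) + \frac{2-q}{-2L(v)}\,g_v(v,w)^2 = 2(q-1)L(v)\alpha^2 + g_v(u,u).
\]
Both terms are nonnegative ($q-1 < 0$ and $L(v)<0$, while $g_v(u,u)\ge 0$), and they vanish simultaneously only when $\alpha = 0$ and $u = 0$, i.e.\ $w=0$. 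The prefactor $(-2L(v))^{q/2-1}$ is positive, hence the Hessian is positive definite on $\Omega_x$, giving strict convexity there; combined with the reduction in the first paragraph, this yields convexity on $T_xM$.

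I do not anticipate a serious obstacle: the whole argument is a short computation followed by the orthogonal decomposition that uses the Lorentzian signature and $q<1$ in a perfectly transparent way. The conceptual point is simply that at $q=2$ one recovers $L$, whose Hessian $(g_{ij}(v))$ is indefinite along the $v$-direction; taking $q<1$ changes the sign of the correction term $2(q-1)L(v)\alpha^2$ enough to compensate for the negativity of $g_v(v,v)$, turning $L_q$ into a strictly convex Lagrangian on the timelike cone.
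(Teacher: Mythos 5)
Your proof is correct. The paper itself does not reprove this lemma---it simply cites \cite[Lemma~3.1]{BO}---so there is no internal proof to compare against; your argument is the standard one for statements of this type, and very likely coincides with what the cited reference does. The Hessian computation is right: writing $L_q=-q^{-1}(-2L)^{q/2}$ and using the $1$-homogeneity of $\partial L/\partial v^i$ gives exactly the bracket $g_v(w,w)+\tfrac{2-q}{-2L(v)}g_v(v,w)^2$, and the $g_v$-orthogonal decomposition $w=\alpha v+u$ turns it into $2(q-1)L(v)\alpha^2+g_v(u,u)$, which is nonnegative and vanishes only for $w=0$ precisely because $q<1$, $L(v)<0$, and $g_v|_{v^\perp}$ is positive definite by the Lorentzian signature. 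The reduction to $\overline\Omega_x$ is also handled properly: $\overline\Omega_x$ is a closed convex cone, $L_q$ is continuous on it (including at $0$ and on the null boundary, where $L_q=0$), so positive semidefiniteness of the Hessian on the interior propagates to convexity on the closure by a limiting argument, and the cases where an endpoint lies outside $\overline\Omega_x$ are trivial since the right-hand side is $+\infty$.

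A mildly different packaging, which you might find conceptually cleaner, is to first observe that the same decomposition $w=\alpha v+u$ shows $F=(-2L)^{1/2}$ is concave on $\Omega_x$, with $\mathrm{Hess}\,F$ vanishing only in the radial direction (this is the infinitesimal form of the reverse Cauchy--Schwarz/triangle inequality). Since $\phi(t)=-q^{-1}t^q$ is convex and strictly decreasing on $(0,\infty)$ for $q\in(0,1)$, the composition $L_q=\phi\circ F$ is convex, and the two degeneracies (radial for $F$, none for $\phi$ since $\phi''>0$) are complementary, giving strict convexity. This separates the ``Lorentzian'' ingredient (concavity of $F$) from the ``power $q<1$'' ingredient, but computationally it amounts to the same thing you did.
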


The convex dual $H_p \colon T^*M \lra [0,\infty]$ of $L_q$,
called the \emph{$p$-Hamiltonian}, is given by
\begin{equation*}
H_p(\omega) := \sup_{v \in TM} \bigl( \omega(v) -L_q(v) \bigr),
\end{equation*}
where $q^{-1}+p^{-1}=1$ and $\omega(v)$ is the canonical pairing.
Note that $p<0$ and we have
\begin{equation}\label{eq:H_p}
H_p(\omega) = \begin{cases}
 -\dfrac{1}{p} \bigl( -2L^*(\omega) \bigr)^{p/2} & \text{if } \omega \in \Omega^*, \\
 \infty & \text{otherwise}.
\end{cases}
\end{equation}

Here and henceforth, we fix a choice of $q \in (0,1)$ and corresponding dual $p < 0$ as above.

\begin{remark}[Ranges of $q,p$]\label{rm:pq}
One can equally well consider $q<0$ and thus $p \in (0,1)$.
The convexity properties of the Lagrangian and the corresponding Hamiltonian remain unchanged (cf.  \cite[Example~3.1]{Min-causality}); moreover, the ellipticity of the $p$-d'Alembertian is also preserved, see \cite{octet, quintet} for details. Note that if $q \in (0,1)$ and thus $p < 0$, $L_q(v) = 0$ for $v \in \partial \Omega$, while $H_p(\omega) = \infty$ for $\omega \in \partial \Omega^*$. These behaviors at the boundary become interchanged if one instead considers $q < 0$ and thus $p \in (0,1)$. For the purposes of our proof of the splitting theorem, we restrict ourselves to a fixed choice of $q \in (0,1)$ and $p < 0$, and will utilize the ellipticity of the $p$-d'Alembertian for this $p$ to obtain our desired results for the Busemann functions. There is no advantage in varying $q$ and $p$ throughout the proof.
\end{remark}

Define the \emph{$p$-energy functional} for a temporal function $f$,
associated to the measure $\fm$, by
\begin{equation*}
\cE_p(-f) := \int_M H_p(-\dd f) \,\dd \fm
 = -\frac{1}{p} \int_M \bigl(  -2L^*(-\dd f) \bigr)^{p/2} \,\dd\fm
 =-\frac{1}{p} \int_M F^*(-\dd f)^p \,\dd\fm.
\end{equation*}
Then, $\cE_p$ is convex in the cone consisting of temporal functions.
Thus, the corresponding \emph{$p$-d'Alembertian} (or the \emph{spacetime $p$-Laplacian}) defined by
\begin{equation}\label{eq:p-dA}
\begin{split}
\square_{\fm,p}(-f)
&:=\div_{\fm} \bigl( F^*(-\dd f)^{p-2} \cdot \nabla(-f) \bigr) \\
&= F^*(-\dd f)^{p-2} \cdot \square_\fm(-f) +\dd\bigl[ F^*(-\dd f)^{p-2} \bigr] \bigl( \nabla(-f) \bigr)
\end{split}
\end{equation}
is elliptic on smooth temporal functions. 
In fact,  for any smooth function $\phi \in C^{\infty}_c(M)$ of compact support, $f+t\phi$ is temporal provided that $|t|$ is sufficiently small. Since the integrand is smooth in $t$ and uniformly bounded on the compact support of $\phi$ for small $|t|$, by Lebesgue's dominated convergence theorem and \eqref{eq:Leg} we deduce that
\begin{align*}
\lim_{t \to 0} \frac{\cE_p(-f-t\phi) -\cE_p(-f)}{t}
&= -\frac{1}{2} \int_M \bigl( -2L^*(-\dd f) \bigr)^{(p-2)/2} \cdot 2\dd\phi \bigl( \nabla(-f) \bigr) \,\dd\fm \\
&= -\int_M \dd\phi \bigl( F^*(-\dd f)^{p-2} \cdot \nabla(-f) \bigr) \,\dd\fm \\
&= \int_M \phi \cdot \div_\fm \bigl( F^*(-\dd f)^{p-2} \cdot \nabla(-f) \bigr) \,\dd\fm.
\end{align*}
Hence, 
\[ \cE_p(-f-t\phi) =\cE_p(-f)
 -t \int_M (-\phi) \cdot \square_{\fm,p}(-f) \,\dd\fm +o(t), \]
which identifies $\square_{\fm,p}$ as the Euler--Lagrange operator associated with $\cE_p$.

\begin{remark}\label{rm:ellip}
In the right-hand side of \eqref{eq:p-dA}, we observe from \eqref{eq:g_eta} or \eqref{eq:g_X} that
\begin{align*}
\dd\bigl[ F^*(-\dd f)^{p-2} \bigr] \bigl( \nabla(-f) \bigr)
&= \dd\Bigl[ \Bigl( -g_{\nabla(-f)}\bigl( \nabla(-f),\nabla(-f) \bigr) \Bigr)^{(p-2)/2} \Bigr] \bigl( \nabla(-f) \bigr) \\
&= -(p-2)F^*(-\dd f)^{p-4} g_{\nabla(-f)}\Bigl( D^{\nabla(-f)}_{\nabla(-f)}\bigl[ \nabla(-f) \bigr],\nabla(-f) \Bigr) \\
&= -(p-2)F^*(-\dd f)^{p-4} g_{\nabla(-f)} \Bigl( \nabla^2(-f)\bigl( \nabla(-f) \bigr),\nabla(-f) \Bigr)
\end{align*}
(a similar calculation in the positive definite case can be found in \cite[\S 3.1]{OSbw}).
Hence,
\begin{equation}\label{eq:p-dA'}
\frac{\square_{\fm,p}(-f)}{F^*(-\dd f)^{p-2}}
=\square_{\fm}(-f) -(p-2)g_{\nabla(-f)} \biggl( \nabla^2(-f)\biggl( \frac{\nabla(-f)}{F^*(-\dd f)} \biggr),\frac{\nabla(-f)}{F^*(-\dd f)} \biggr).
\end{equation}
In a suitable local chart $(x^i)_{i=1}^n$ around $x \in M$ such that $(\del/\del x^i)_{i=1}^n$ is orthonormal with respect to $g_{\nabla(-f)}$ with $\del/\del x^1 =\nabla(-f)/F^*(-\dd f)$ at $x$,
recalling the coordinate expression of $\nabla(-f)$ in \eqref{eq:grad}, we can write down the second order part of \eqref{eq:p-dA'} at $x$ as
\[ -\frac{\del^2 (-f)}{(\del x^1)^2} +\sum_{i=2}^n \frac{\del^2 (-f)}{(\del x^i)^2} -(p-2)\frac{\del^2 (-f)}{(\del x^1)^2}
=(1-p)\frac{\del^2 (-f)}{(\del x^1)^2} +\sum_{i=2}^n \frac{\del^2 (-f)}{(\del x^i)^2}. \]
Therefore, $\square_{\fm,p}$ is elliptic exactly when $p<1$.
We also remark that, in the positive definite case, the $p$-Laplacian $\Delta_p f:=\div_\fm (F^*(\dd f)^{p-2} \cdot \nabla f)$ is usually considered for $p \in (1,\infty)$.
\end{remark}

For $f(x):=\tau(z,x)$ and $\ol{f}(x):=\tau(x,z)$, if we replace $\square_\fm$ with $\square_{\fm,p}$
in Theorem~\ref{th:Lcomp} as well as Corollary~\ref{cr:Lcomp},
since $F^*(-\dd f) \equiv 1$ (resp.\ $F^*(\dd\ol{f}) \equiv 1$) on $I^+(z)$ (resp.\ $I^-(z)$) outside the future (resp.\ past) timelike cut locus, we immediately obtain the following.

\begin{lemma}[$p$-d'Alembertian comparison estimates]\label{lm:p-comp}
Let $(M,L,\fm)$, $N$ and $\ez$ be as in Theorem~$\ref{th:Lcomp}$.
Then, for any $z \in M$, the function $f(x):=\tau(z,x)$ satisfies
\begin{equation}\label{eq:p-comp}
\square_{\fm,p} (-f)\big( \eta(t) \big)
 \le \e^{\frac{2(\ez -1)}{n-1} \psi_\fm(\dot{\eta}(t))}
 \bigg( c\int_0^t \e^{\frac{2(\ez -1)}{n-1} \psi_\fm(\dot{\eta}(s))} \,\dd s \bigg)^{-1}
\end{equation}
for any maximizing timelike geodesic $\eta\colon [0,T) \lra M$ of unit speed
with $\eta(0)=z$ and any $t \in (0,T)$.
Moreover, the function $\ol{f}(x):=\tau(x,z)$ satisfies
\begin{equation}\label{eq:p-comp'}
\rev{\square}_{\fm,p} (-\ol{f}) \big( \eta(-t) \big)
 =-\square_{\fm,p} \ol{f} \big( \eta(-t) \big)
 \le \e^{\frac{2(\ez -1)}{n-1} \psi_\fm(\dot{\eta}(-t))}
 \bigg( c\int_{-t}^0 \e^{\frac{2(\ez -1)}{n-1} \psi_\fm(\dot{\eta}(s))} \,\dd s \bigg)^{-1}
\end{equation}
for any maximizing timelike geodesic $\eta\colon (-T,0] \lra M$ of unit speed with $\eta(0)=z$
and any $t \in (0,T)$.
\end{lemma}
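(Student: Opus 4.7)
The plan is to read off the lemma directly from the d'Alembertian comparison theorem (Theorem~\ref{th:Lcomp}) and its reverse counterpart (Corollary~\ref{cr:Lcomp}), using the explicit decomposition \eqref{eq:p-dA}. The only content beyond Theorem~\ref{th:Lcomp} is a verification that, along the distinguished maximizing geodesic $\eta$, the ``dual norm of $-\dd f$'' factor $F^*(-\dd f)$ contributes trivially.

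First I would set $f(x):=\tau(z,x)$ and restrict attention to points $\eta(t)$ with $t \in (0,T)$, where $f$ is smooth (since $\eta$ is maximizing and unit-speed up to $T$, one is strictly inside the cut locus of $z$ along $\eta$). On this portion of $\eta$ we have $f \circ \eta(t) = t$, so that $\dd f(\dot{\eta}) = 1$. Combining this with the defining identity $g_{\nabla(-f)}(\nabla(-f),\cdot) = -\dd f$ from \eqref{eq:grad} and the fact that $F(\dot\eta) \equiv 1$ identifies $\nabla(-f)(\eta(t)) = \dot{\eta}(t)$, and hence
\[ F^*\bigl( -\dd f(\eta(t)) \bigr) \,=\, F\bigl( \nabla(-f)(\eta(t)) \bigr) \,=\, F\bigl( \dot{\eta}(t) \bigr) \,=\, 1 \]
for every $t \in (0,T)$.

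Next I would apply the formula \eqref{eq:p-dA}:
\[ \square_{\fm,p}(-f) \,=\, F^*(-\dd f)^{p-2}\cdot \square_\fm(-f) \,+\, \dd\bigl[ F^*(-\dd f)^{p-2} \bigr]\bigl( \nabla(-f) \bigr). \]
Evaluating at $\eta(t)$, the prefactor $F^*(-\dd f)^{p-2}$ equals $1$. Moreover, since $\nabla(-f) = \dot{\eta}$ on $\eta$, the directional derivative appearing in the second term is
\[ \dd\bigl[ F^*(-\dd f)^{p-2} \bigr]\bigl( \dot{\eta}(t) \bigr) \,=\, \frac{\dd}{\dd t}\bigl[ F^*(-\dd f)^{p-2}(\eta(t)) \bigr] \,=\, \frac{\dd}{\dd t}\bigl[ 1^{p-2} \bigr] \,=\, 0. \]
Thus $\square_{\fm,p}(-f)(\eta(t)) = \square_\fm(-f)(\eta(t))$, and the estimate \eqref{eq:p-comp} is just the specialization of \eqref{eq:Lcomp} to the same geodesic.

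For the reverse inequality \eqref{eq:p-comp'}, I would repeat the same calculation for the reverse Lorentz--Finsler structure $\rev{L}$ applied to the reverse geodesic $\bar{\eta}(t):=\eta(-t)$ and the function $\ol{f}(x) = \tau(x,z)$. The identity $\rev{\square}_{\fm,p}(-\ol{f}) = -\square_{\fm,p}\ol{f}$ is proved in the same way as its unweighted, non-$p$ counterpart using $\rev{\psi}_\fm(v)=\psi_\fm(-v)$, and the computation $F^*(\dd\ol{f}) \equiv 1$ along $\eta$ is identical. Then Corollary~\ref{cr:Lcomp} gives the conclusion. There is no real obstacle here: the content is the reduction $\square_{\fm,p}(-f)|_\eta = \square_\fm(-f)|_\eta$, which is precisely what the authors signal by writing ``since $F^*(-\dd f)\equiv 1$ and $F^*(\dd\ol{f})\equiv 1$''.
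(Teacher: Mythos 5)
Your proposal is correct and matches the paper's own (very terse) argument: the paper simply observes that $F^*(-\dd f)\equiv 1$ and $F^*(\dd\ol{f})\equiv 1$, so that via \eqref{eq:p-dA} both the prefactor $F^*(-\dd f)^{p-2}$ and the gradient term $\dd[F^*(-\dd f)^{p-2}](\nabla(-f))$ become trivial and $\square_{\fm,p}(-f)$ collapses to $\square_\fm(-f)$, after which Theorem~\ref{th:Lcomp} and Corollary~\ref{cr:Lcomp} apply verbatim. You have merely unpacked that remark, correctly identifying $\nabla(-f)=\dot\eta$ along $\eta$ as the source of the eikonal identity and checking that the derivative of $F^*(-\dd f)^{p-2}$ in the direction $\nabla(-f)=\dot\eta$ vanishes.
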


\begin{remark}[Unweighted setting]\label{rm:unwght}
For completeness, we also discuss the $p$-d'Alembertian in the unweighted situation.
Let us set
\[ \nabla_p(-f) :=\sum_{i=1}^n \frac{\del H_p}{\del \omega_i}(-\dd f)\frac{\del}{\del x^i}
=F^*(-\dd f)^{p-2} \cdot \nabla(-f). \]
Then, it is natural to define
\begin{equation*}
\begin{split}
\square_p(-f) &:=\trace \Bigl[ D^{\nabla(-f)} \bigl[ \nabla_p (-f) \bigr] \Bigr] \\
&= F^*(-\dd f)^{p-2} \cdot \square(-f) +\dd\bigl[ F^*(-\dd f)^{p-2} \bigr] \bigl( \nabla(-f) \bigr).
\end{split}
\end{equation*}
Note that $\nabla_2(-f)=\nabla(-f)$ and $\square_2(-f)=\square(-f)$.
\end{remark}

\section{Busemann functions}\label{sc:busemann}

In this section, we summarize some properties of the Busemann function associated with a timelike straight line established in  \cite{LMO3}, under the assumption of timelike geodesic completeness, along the lines of  \cite{GH} (see also \cite[Chapter~14]{BEE:1996}).
The globally hyperbolic case will be discussed later in Subsection~\ref{ssc:ghyp}. 

\subsection{Rays and generalized co-rays}\label{ssc:ray}

\begin{definition}[Timelike geodesic completeness]\label{df:gcmplt}
A Finsler spacetime $(M,L)$ is said to be \emph{future timelike geodesically complete}
if any timelike geodesic $\eta\colon [0,1] \lra M$ can be extended to a geodesic
$\tilde{\eta}\colon [0,\infty) \lra M$.
We say that $(M,L)$ is \emph{timelike geodesically complete}
if both $(M,L)$ and $(M,\rev{L})$ are future timelike geodesically complete
(in other words, the above $\eta$ is extended to a geodesic $\tilde{\eta}\colon \R \lra M$).
\end{definition}

A future inextendible causal geodesic $\eta\colon [0,T) \lra M$ (with $0 < T \leq \infty$) is called a \emph{ray}
if each of its segments is maximizing, i.e.,
$\bL(\eta|_{[a,b]})=\tau(\eta(a),\eta(b))$ for all $0\le a\le b < T$.
When $\eta$ is future complete (i.e., $T=\infty$, which is always the case if $(M,L)$ is future timelike geodesically complete), we define
\begin{equation}\label{eq:I(eta)}
I^-(\eta) := \bigcup_{t>0} I^-\big( \eta(t) \big), \qquad
I(\eta) :=I^+\big( \eta(0) \big) \cap I^-(\eta).
\end{equation}
Then $I(\eta)$ is an open set including $\eta((0,\infty))$, and $\tau(x,y)<\infty$ for all $x,y \in I(\eta)$ with $x \le y$ (this follows from the reverse triangle inequality as well as the fact that $\tau(\eta(a),\eta(b)) < \infty$ for all $0 \le a \le b <\infty$).

In the rest of this section, we suppose that $(M,L)$ is future timelike geodesically complete.
Let $\eta\colon [0,\infty) \lra M$ be a timelike ray.
Rays asymptotic to a given ray, called co-rays, play an important role in splitting theorems.
In the non-globally hyperbolic case, due to the possible absence of maximizing curves, we need to consider also generalized co-rays (see \cite{BeemEhrlichMarkvorsenGalloway84,BEMG85,EG}).

We recall that a sequence $\zeta_k \colon [0,a_k] \lra M$ of causal curves is said to be {\em limit maximizing} if
$\bL(\zeta_k)\geq \tau(\zeta_k(0), \zeta_k(a_k)) - \ve_k$ for some positive sequence $\ve_k \to 0$ (see, e.g., \cite[p.~210]{EG}).
Then, as a consequence of a limit curve argument, if a limit maximizing sequence $\zeta_k\colon [0,a_k] \lra M$ of causal curves satisfies $\zeta_k(0)\to x$ and $\tau(\zeta_k(0),\zeta_k(a_k)) \to \infty$, then there exists a subsequence of $(\zeta_k)$ that, up to  reparametrizations by arclength with respect to an auxiliary complete Riemannian metric on $M$, converges uniformly on compact subintervals of $[0,\infty)$ to a ray $\zeta\colon [0,\infty) \lra M$ emanating from $x$ (see  \cite[Lemma~4.4]{LMO3}, \cite[Lemma~2.4]{GH}).
\begin{definition}[Generalized co-rays]\label{df:coray}
Let $x \in I(\eta)$.
If a limit maximizing sequence of causal curves $\zeta_k\colon [0,a_k] \lra M$ satisfies
$\zeta_k(0) \to x$ and $\zeta_k(a_k)=\eta(t_k)$ for some $t_k>0$ with $t_k \to \infty$,
then its limit curve $\zeta\colon [0,\infty) \lra M$ is called a \emph{generalized co-ray} of $\eta$.
If each $\zeta_k$ is maximizing, then we call $\zeta$ a \emph{co-ray} of $\eta$.
A co-ray $\zeta$ with $\zeta_k(0)=x$ for all $k$ is called an \emph{asymptote} of $\eta$.
\end{definition}

The limit curve $\zeta$ is indeed a ray as we mentioned above.
To avoid convergence to a lightlike ray, we introduce the following condition.

\begin{definition}[Generalized timelike co-ray condition (GTCC)]\label{df:GTCC}
Let $x \in I(\eta)$.
We say that the \emph{generalized timelike co-ray condition}
(\emph{GTCC} for short) for $\eta$ holds at $x$
if any generalized co-ray of $\eta$ emanating from $x$ is timelike.
\end{definition}

The GTCC implies the following fine properties (see \cite[Lemmas~4.8, 4.9]{LMO3}, \cite[Lemmas~3.3, 3.4]{GH}).

\begin{lemma}[Existence of maximizing geodesics]\label{lm:GH3.3}
Suppose that the GTCC for $\eta$ holds at some $x \in I(\eta)$.
Then, there exist a neighborhood $U$ of $x$ and $T>0$ such that, for any $z \in U$ and $t>T$, there exists a maximizing timelike geodesic from $z$ to $\eta(t)$.
Moreover, every $z \in U$ admits a timelike asymptote of $\eta$ emanating from $z$.
\end{lemma}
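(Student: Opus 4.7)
My strategy is to combine the limit curve machinery (recalled just above Definition~\ref{df:coray}) with the GTCC to first extract a timelike generalized co-ray from $x$, then to use the openness of the timelike cone $\Omega \subset TM$ to simultaneously upgrade approximating causal curves to honest maximizing timelike geodesics and to propagate the existence statements to a full neighborhood of $x$.

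For the existence of an asymptote and of maximizing geodesics at $x$ itself, I would argue as follows. Since $x \in I(\eta)$, one has $x \ll \eta(t)$ for all $t$ sufficiently large. For any such $t$, pick a causal curve $\zeta_t \colon [0,a_t] \to M$ from $x$ to $\eta(t)$ which is $(1/t)$-maximizing, i.e.\ $\bL(\zeta_t) \ge \tau(x,\eta(t)) - 1/t$. The resulting family is limit maximizing with $\tau(\zeta_t(0),\zeta_t(a_t)) \to \infty$, so after a suitable reparametrization the limit curve argument produces a subsequence converging on compact intervals to a generalized co-ray $\zeta^x \colon [0,\infty) \to M$ of $\eta$ issuing from $x$, which by the GTCC at $x$ is timelike. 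Openness of $\Omega$ combined with this $C^0$-type convergence forces $\dot\zeta_t(0)$ to lie in $\Omega_x$ for all large $t$. The timelikeness of $\zeta^x$ together with future timelike geodesic completeness allows a tubular neighborhood of $\zeta^x$ to be well controlled, and on it the $(1/t)$-maximality of $\zeta_t$ upgrades $\zeta_t$ to a genuine maximizing timelike geodesic $\gamma_t$ from $x$ to $\eta(t)$ via a standard straightening using the exponential map. A further subsequence of $\gamma_t$ then converges to a timelike asymptote of $\eta$ at $x$.

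For the propagation to a neighborhood $U$, the crucial observation is that although the GTCC is assumed only at $x$ it automatically extends to a neighborhood. Indeed, if $z_n \to x$ and $\xi^{z_n}$ is a generalized co-ray of $\eta$ from $z_n$, the sequences of causal approximants underlying the $\xi^{z_n}$ can be diagonalized, and the limit curve argument produces a subsequence of $(\xi^{z_n})$ converging on compact intervals to a generalized co-ray of $\eta$ from $x$; this limit is timelike by the GTCC at $x$, and hence the $\xi^{z_n}$ are themselves timelike for all large $n$ by openness of $\Omega$. Shrinking to some neighborhood $U$ of $x$ therefore ensures that the GTCC holds at every $z \in U$, so that the argument of the previous paragraph applies at each such $z$ and yields both the asymptote at $z$ and the maximizing geodesics from $z$ to $\eta(t)$ for $t$ beyond a common threshold $T$; uniformity of $T$ comes from further shrinking $U$ so that $U \subset I^-(\eta(t))$ for all $t > T$, which is possible because $I^-(\eta)$ is open and contains $x$.

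The main obstacle is the upgrade from $(1/t)$-maximizing causal curves to genuine maximizing timelike geodesics between two given points $x$ and $\eta(t)$: in the absence of global hyperbolicity this is not automatic, and one cannot simply invoke a maximizer for each finite $t$ before taking a limit. The argument leans crucially on both future timelike geodesic completeness, which permits exponentiating the candidate initial direction to a geodesic defined for all time, and on the timelikeness of the limit co-ray guaranteed by the GTCC, which keeps approximating geodesics inside a tube where the length comparison with $\zeta_t$ is valid. This is precisely the step where the non-globally-hyperbolic Finsler setting requires more delicate handling than the classical globally hyperbolic case.
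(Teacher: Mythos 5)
Your proposal correctly identifies the general ingredients (limit curve machinery, GTCC, openness of the timelike cone) and its propagation step to a neighborhood is in the right spirit, but there is a genuine gap in the construction of maximizing timelike geodesics from $z$ to $\eta(t)$, which is the hard content of the lemma.

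The first problem is the claim that ``openness of $\Omega$ combined with this $C^0$-type convergence forces $\dot{\zeta}_t(0)$ to lie in $\Omega_x$ for all large $t$.'' The curves $\zeta_t$ are only $(1/t)$-maximizing causal curves, not geodesics; they are merely Lipschitz, their ``initial velocities'' are not well controlled, and uniform convergence on compacta does not give convergence of derivatives. A $(1/t)$-maximizing causal curve from $x$ to $\eta(t)$ may well leave $x$ in an almost-null direction and bend into the timelike cone later, even if its $C^0$-limit is a smooth timelike ray. Upgrading to $C^1$-convergence is legitimate once the curves are known to be geodesics, but $\zeta_t$ is not.

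The second and central problem is the ``standard straightening using the exponential map.'' Exponentiating the initial direction of the limit co-ray $\zeta^x$ (which is what geodesic completeness gives you) reproduces $\zeta^x$ itself, which runs off to infinity and does not pass through $\eta(t)$; it does not produce a maximizer from $x$ to $\eta(t)$. Moreover, for large $t$ the endpoint $\eta(t)$ cannot lie in any fixed tubular neighborhood of an initial segment of $\zeta^x$, so there is no tube in which one can compare lengths of $\zeta_t$ with a straightened candidate. In a non-globally-hyperbolic spacetime there is no general reason a length-maximizing causal curve between two chronologically related points should exist, and establishing that it does is precisely what the lemma asserts --- so invoking a straightening procedure silently assumes the conclusion.

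The proof in the references \cite{LMO3,GH} (to which the paper defers) is instead a contradiction argument: one assumes $q_k \to x$ and $t_k \to \infty$ with no maximizing timelike geodesic from $q_k$ to $\eta(t_k)$, takes $\ve_k$-maximal causal curves and, via a diagonal limit-curve extraction, obtains a generalized co-ray $\sigma$ of $\eta$ from $x$. GTCC forces $\sigma$ to be timelike, and hence (by compactness of the set of co-ray directions, which sits compactly inside the open cone $\Omega_x$) the initial directions of all relevant nearly-maximal curves are uniformly bounded away from the light cone. This uniform timelike control, combined with future timelike geodesic completeness, is what yields compactness of the relevant families of curves and hence the existence of a maximizer, contradicting the assumption. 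Your proof is missing exactly this compactness mechanism and replaces it with an appeal to ``straightening'' that does not supply it.
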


The GTCC holds on a neighborhood of $\eta$ as follows (see \cite[Proposition~4.10]{LMO3}, \cite[Proposition~5.1, Corollary~5.2]{GH}).

\begin{proposition}[GTCC near rays]\label{pr:GH5.1}
Any generalized co-ray emanating from $\eta(a)$ with $a>0$
necessarily coincides with $\eta$.
In particular, the GTCC holds on an open set including $\eta((0,\infty))$.
\end{proposition}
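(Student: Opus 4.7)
The plan is to argue by contradiction, along the lines of Galloway--Horta \cite{GH} as adapted in \cite{LMO3}. Let $\zeta\colon [0,\infty) \lra M$ be a generalized co-ray emanating from $\eta(a)$ with $a>0$, realized as the limit curve of a limit maximizing sequence $\zeta_k\colon [0,a_k] \lra M$ satisfying $\zeta_k(0) \to \eta(a)$ and $\zeta_k(a_k) = \eta(t_k)$ with $t_k \to \infty$; all curves are parametrized by unit $F$-length. As a first step, a standard limit-maximizing argument (applying the reverse triangle inequality to each subsegment of $\zeta_k$, together with the vanishing defect $\ve_k \to 0$ and the lower semicontinuity of $\tau$) shows that every subsegment of $\zeta$ is itself maximizing, so $\zeta$ is a causal geodesic ray from $\eta(a)$.

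The heart of the proof is to establish that the concatenation $\gamma:=\eta|_{[0,a]} * \zeta$ is again a ray from $\eta(0)$, i.e., $\tau(\eta(0),\zeta(t)) = a+t$ for every $t > 0$. The inequality $\geq$ is immediate from the broken causal curve of length $a+t$. For the reverse, I would suppose $\tau(\eta(0),\zeta(t)) > a+t+2\delta$ for some $t, \delta > 0$. Since $\zeta_k(t) \to \zeta(t)$ and $\tau$ is lower semicontinuous, for $k$ large there exists a causal curve $\alpha_k$ from $\eta(0)$ to $\zeta_k(t)$ of length $>a+t+\delta$; concatenating with $\zeta_k|_{[t,a_k]}$ yields a causal curve from $\eta(0)$ to $\eta(t_k)$ of length $> a+\delta+\bL(\zeta_k)$. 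On the other hand, fixing a small $\epsilon' > 0$, openness of the chronology relation combined with $\zeta_k(0) \to \eta(a)$ gives $\zeta_k(0) \in I^-(\eta(a+\epsilon'))$ for large $k$, so the reverse triangle inequality along the ray $\eta$ yields $\tau(\zeta_k(0),\eta(t_k)) \geq t_k - a - \epsilon'$, and the limit maximizing property then produces $\bL(\zeta_k) \geq t_k - a - \epsilon' - \ve_k$. Hence the constructed causal curve has length exceeding $t_k + \delta - \epsilon' - \ve_k$, which is strictly greater than $\tau(\eta(0),\eta(t_k)) = t_k$ once $\epsilon'$ and $\ve_k$ are small enough---contradicting the definition of $\tau$.

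With $\gamma$ a ray, a standard first-variation / corner-smoothing argument forces $\dot\zeta(0)$ to be proportional to $\dot\eta(a)$: if the tangents were not proportional, rounding the corner at $\eta(a)$ would produce a strictly longer timelike curve, contradicting the maximality encoded in the ray property of $\gamma$. Uniqueness of causal geodesics with prescribed initial data then gives $\zeta = \eta|_{[a,\infty)}$ after reparametrization, proving the first claim. The \emph{in particular} part follows: GTCC holds at every $\eta(a)$ with $a>0$ since the unique generalized co-ray there is timelike; and if GTCC failed at some sequence $x_j \to \eta(a) \in \eta((0,\infty))$, a diagonal extraction applied to the corresponding lightlike generalized co-rays would, via the limit curve theorem, produce a lightlike generalized co-ray at $\eta(a)$ itself, contradicting what was just proved. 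The main technical difficulty lies in the bookkeeping of the middle paragraph---arranging $\delta$, $\epsilon'$, and $\ve_k$ so that the small error terms do not overwhelm the strict gain $\delta$ coming from the supposed strict inequality---while the corner smoothing and the openness limit argument are standard.
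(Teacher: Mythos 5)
Your proof follows the Galloway--Horta strategy that the paper adopts by citing \cite{GH} and \cite{LMO3} in lieu of reproducing the argument, and the key idea---showing the concatenation $\gamma := \eta|_{[0,a]}*\zeta$ is a ray and deducing that $\dot\zeta(0)$ must match $\dot\eta(a)$---is the right one and is argued correctly in its essentials. However, your opening declaration that ``all curves are parametrized by unit $F$-length'' quietly presupposes that $\zeta$ is timelike, whereas a generalized co-ray is a priori only causal and may be lightlike; excluding that possibility is precisely the content of the proposition, so the argument as literally written is circular (and the identity $\bL(\zeta_k|_{[0,t]})=t$ you implicitly use need not hold). The standard repair is to parametrize all causal curves by $h$-arclength for an auxiliary complete Riemannian $h$, as in the limit curve theorem, and to prove $\tau(\eta(0),\zeta(t)) = a + \bL(\zeta|_{[0,t]})$ instead; your $\delta$--$\epsilon'$--$\ve_k$ bookkeeping then goes through verbatim once one inserts the upper semicontinuity $\limsup_k \bL(\zeta_k|_{[0,t]}) \le \bL(\zeta|_{[0,t]})$ of the Lorentz--Finsler length along uniform limits in place of that identity. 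Having established that $\gamma$ is a ray, it is a maximizing causal curve and therefore a smooth causal geodesic; since geodesics have fixed causal character and $\gamma|_{[0,a]}=\eta|_{[0,a]}$ is timelike, $\zeta$ is timelike with $\dot\zeta(0)=\dot\eta(a)$, whence $\zeta=\eta|_{[a,\infty)}$---this is a cleaner route than your corner-rounding heuristic, though both work. One last caveat: in the ``in particular'' step, the diagonal extraction must act on the underlying limit maximizing sequences $\zeta^{(j)}_k$ (with $\zeta^{(j)}_k(a_k^{(j)})$ on $\eta$), not directly on the co-rays $\zeta^{(j)}$, so that the resulting limit is indeed a generalized co-ray of $\eta$ at $\eta(a)$ in the sense of Definition~\ref{df:coray}; its lightlikeness then follows from convergence of the unit-$h$-norm initial tangents together with continuity of $L$.
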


\subsection{Busemann functions for rays}\label{ssc:rays}

We continue to assume future timelike geodesic completeness,
and let $\eta\colon [0,\infty)\lra M$ be a timelike ray of unit speed ($F(\dot{\eta}) \equiv 1$).
We introduce a central ingredient of the proof of the splitting theorem.

\begin{definition}[Busemann functions]\label{df:Buse}
Define the \emph{Busemann function} $\bb_{\eta}\colon M \lra [-\infty,\infty]$
associated with $\eta$ by
\[ \bb_{\eta}(x) :=\lim_{t \to \infty} \bb_{\eta,t}(x), \qquad
 \text{where}\,\ \bb_{\eta,t}(x) :=t-\tau\bigl( x,\eta(t) \bigr). \]
\end{definition}

The limit above always exists in $\R \cup \{\pm \infty\}$.
Precisely, if $x \not\in I^-(\eta)$, then $\tau(x,\eta(t))=0$ for all $t$ and hence $\bb_{\eta}(x)=\infty$.
If $x \in I^-(\eta)$, then the reverse triangle inequality implies that, for large $s<t$,
\[ \bb_{\eta,t}(x) \le t-\big\{ \tau\big( x,\eta(s) \big) +\tau\big( \eta(s),\eta(t) \big) \big\}
 =\bb_{\eta,s}(x). \]
Hence, $\bb_{\eta,t}(x)$ is non-increasing in $t$ and converges to $\bb_{\eta}(x) \in \R \cup \{-\infty\}$.
One can also see $\bb_\eta(x) \in \R$ for $x \in I(\eta)$ since
\[ \bb_{\eta,t}(x) \ge t +\tau\bigl( \eta(0),x \bigr) -\tau\bigl( \eta(0),\eta(t) \bigr) =\tau\bigl( \eta(0),x \bigr). \]

It follows from the reverse triangle inequality that $\bb_\eta(y) \ge \bb_\eta(x) +\tau(x,y)$ for $x \le y$.
Moreover, since $\tau$ is lower semi-continuous, $\bb_{\eta}$ is upper semi-continuous in $I^-(\eta)$.
We summarize some further fundamental properties of $\bb_\eta$ along timelike asymptotes.

\begin{lemma}[Properties of $\bb_\eta$ along asymptotes]\label{lm:zeta}
Let $\zeta\colon [0,\infty) \lra M$ be a timelike asymptote of $\eta$ of unit speed
with $x:=\zeta(0) \in I(\eta)$.
\begin{enumerate}[{\rm (i)}]
\item\label{it:zeta1}
For each $t>0$,
\[ \rho(z) :=\bb_\eta(x) +\bb_{\zeta,t}(z) =\bb_\eta(x) +t -\tau\big( z,\zeta(t) \big) \]
is an \emph{upper support function} for $\bb_\eta$ at $x$ in the sense that
$\rho \ge \bb_\eta$ on $I^+(\eta(0)) \cap I^-(\zeta(t))$ and $\rho(x)=\bb_\eta(x)$.

\item\label{it:zeta2}
We have $\bb_\eta(\zeta(t)) =\bb_\eta(x)+t$ for all $t \ge 0$.
In particular, $I(\zeta) \subset I(\eta)$ holds.

\item\label{it:zeta3}
If $\bb_\eta$ is differentiable at $x$, then we have $\zeta(t)=\exp_x(t\nabla(-\bb_\eta)(x))$, and $\zeta$ is the unique timelike asymptote of $\eta$ of unit speed emanating from $x$.
In particular, $F(\nabla(-\bb_\eta)(x))=1$ holds.

\item\label{it:zeta4}
For each $t>0$, $\bb_\eta$ is differentiable at $\zeta(t)$ with $\nabla(-\bb_\eta)(\zeta(t)) =\dot{\zeta}(t)$.
\end{enumerate}
\end{lemma}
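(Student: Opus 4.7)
The plan is to prove the four claims in sequence, exploiting the following structure provided by the asymptote hypothesis: there is a sequence of maximizing unit-speed timelike geodesics $\zeta_k \colon [0, a_k] \lra M$ with $\zeta_k(0) = x$, $\zeta_k(a_k) = \eta(t_k)$, $t_k \to \infty$, converging to $\zeta$ uniformly on compact intervals. A useful preliminary is that $a_k = \tau(x, \eta(t_k)) = t_k - \bb_\eta(x) + o(1)$, so $a_k \to \infty$ at a rate controlled by $\bb_\eta(x)$.

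For (i), the key step is the reverse triangle inequality applied at the intermediate point $\zeta_k(t)$: for $z \in I^+(\eta(0)) \cap I^-(\zeta(t))$, one has $z \ll \zeta_k(t)$ for large $k$, so $\tau(z, \eta(t_k)) \ge \tau(z, \zeta_k(t)) + (a_k - t)$. Rearranging yields $\bb_{\eta, t_k}(z) \le (t_k - a_k) + t - \tau(z, \zeta_k(t))$, and passing to the limit using $t_k - a_k \to \bb_\eta(x)$ together with the lower semi-continuity of $\tau_+$ at $(z, \zeta(t))$ (in force since $z \ll \zeta(t)$) yields $\bb_\eta(z) \le \bb_\eta(x) + t - \tau(z, \zeta(t)) = \rho(z)$. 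Equality at $z = x$ follows from $\tau(x, \zeta(t)) = t$, since $\zeta$ is a ray of unit speed. For (ii), specializing $z = \zeta(s)$ with $0 < s < t$ in (i) gives $\bb_\eta(\zeta(s)) \le \bb_\eta(x) + s$, matched by the reverse triangle inequality $\bb_\eta(\zeta(s)) \ge \bb_\eta(x) + \tau(x, \zeta(s)) = \bb_\eta(x) + s$; letting $s$ vary up to $t$ gives the claimed identity. The inclusion $I(\zeta) \subset I(\eta)$ then follows since $I^+(\zeta(0)) = I^+(x) \subset I^+(\eta(0))$, and finiteness of $\bb_\eta(\zeta(s))$ forces $\zeta(s) \in I^-(\eta)$, so $y \ll \zeta(s)$ implies $y \in I^-(\eta)$.

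For (iii), the support function $\rho$ from (i) is $C^\infty$ near $x$ because $\zeta|_{[0,t]}$ is a maximizing timelike geodesic and carries no interior conjugate or cut points along the ray $\zeta$, which gives smoothness of $\tau(\cdot, \zeta(t))$ near $x$. The inequality $\bb_\eta \le \rho$ with equality at $x$, together with the assumed differentiability of $\bb_\eta$ at $x$, forces $d\bb_\eta(x) = d\rho(x)$. The Legendre transform $\LL^*$ identifies $\nabla(-\rho)(x)$ as the $F$-unit future-timelike vector dual to $-d\rho(x)$; a direct computation along $\zeta$ (using $\rho(\zeta(r)) = \bb_\eta(x) + r$, which encodes the eikonal identity) shows this vector coincides with $\dot\zeta(0)$. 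Hence $\nabla(-\bb_\eta)(x) = \dot\zeta(0)$, $F(\nabla(-\bb_\eta)(x)) = 1$, and $\zeta(r) = \exp_x(r \nabla(-\bb_\eta)(x))$ by definition of the exponential map. Uniqueness follows by applying the same argument to any second timelike asymptote from $x$: its initial velocity is likewise forced to equal $\nabla(-\bb_\eta)(x)$, so it coincides with $\zeta$ by uniqueness of geodesic initial value problems.

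For (iv), the strategy is to sandwich $\bb_\eta$ near $\zeta(t)$ between a smooth upper support $\rho_T(y) := \bb_\eta(x) + T - \tau(y, \zeta(T))$ and a smooth lower support $\mu_s(y) := \bb_\eta(x) + s + \tau(\zeta(s), y)$, for any $0 < s < t < T$. The bound $\bb_\eta \le \rho_T$ on $I^-(\zeta(T))$ follows by combining the reverse triangle inequality with (ii), with equality at $y = \zeta(t)$; the bound $\bb_\eta \ge \mu_s$ on $I^+(\zeta(s))$ follows directly from the reverse triangle inequality and (ii), with equality at $\zeta(t)$. Both bounds are $C^\infty$ on a common neighborhood of $\zeta(t)$ because $\zeta|_{[s, T]}$ is a maximizing segment of a ray and $\zeta(t)$ is interior, avoiding cut and conjugate loci. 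The Legendre transform $\LL^*$ applied to the eikonal identities along $\zeta$, precisely as in (iii), identifies $\nabla(-\mu_s)(\zeta(t)) = \dot\zeta(t) = \nabla(-\rho_T)(\zeta(t))$, so the bounding differentials coincide at $\zeta(t)$. A standard sandwich argument then forces $\bb_\eta$ to be differentiable at $\zeta(t)$ with $\nabla(-\bb_\eta)(\zeta(t)) = \dot\zeta(t)$. The main obstacle across the proof is the careful use of the nonlinear Legendre transform $\LL^*$ to identify gradients of the smooth support functions in the Finsler setting, together with the smoothness claims, which rely on the absence of conjugate and cut points along maximizing sub-segments of the ray $\zeta$.
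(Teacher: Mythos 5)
Your proposal is correct and largely reconstructs the standard support-function argument that the paper cites (\cite[Lemma~5.2]{LMO3}, \cite[\S2.2]{Eschenburg:1988}). For parts (i), (ii), and (iv) your route is essentially the one the paper takes: reverse triangle inequality through an intermediate point $\zeta_k(t)$ plus lower semicontinuity of $\tau_+$ for (i), the two-sided squeeze via (i) and the reverse triangle inequality for (ii), and the sandwich between smooth upper and lower support functions for (iv) (the paper uses the global lower support $\bb_\eta(x)+\tau(x,\cdot)$ instead of your $\mu_s=\bb_\eta(x)+s+\tau(\zeta(s),\cdot)$, but this is immaterial since both are smooth near $\zeta(t)$).

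Where you genuinely diverge is (iii). The paper's proof is a short convexity argument: if $\dd\bb_\eta(v)=\dd\bb_\eta(w)=1$ for distinct unit future timelike $v,w$, then by linearity $\dd\bb_\eta\bigl(\tfrac{v+w}{2}\bigr)=1$, while strict concavity of $F$ on $\Omega_x$ (the reverse Minkowski inequality) gives $F\bigl(\tfrac{v+w}{2}\bigr)>1$; normalizing yields a unit timelike $u$ with $\dd\bb_\eta(u)<1$, contradicting $\bb_\eta(y)\ge\bb_\eta(x)+\tau(x,y)$. This handles uniqueness entirely in the tangent space, with no appeal to smoothness of $\tau(\cdot,\zeta(t))$ or to conjugate/cut-point considerations. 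Your route — passing through the smoothness of the upper support $\rho$ and its eikonal identity to identify $\nabla(-\rho)(x)=\dot\zeta(0)$ — reaches the same conclusion, but does more work: it requires justifying that $\tau(\cdot,\zeta(t))$ is $C^\infty$ near $x$, which in a Finsler spacetime rests on the exponential map being smooth away from the zero section and on absence of cut points along the ray. That is all true here, but it is heavier machinery than needed; note also that once you have (ii), the relations $\dd\bb_\eta(\dot\zeta(0))=1$ and $\dd\bb_\eta(u)\ge 1$ for all unit timelike $u$ already determine $\LL^*(-\dd\bb_\eta(x))=\dot\zeta(0)$ via Definition~\ref{df:Leg}, making the detour through $\rho$ logically unnecessary. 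Both approaches are valid; the paper's is shorter and avoids regularity input, yours makes the geometric picture (gradient of the support function $=$ asymptote velocity) more explicit and anticipates the structure of (iv).
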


\begin{proof}
We refer to \cite[Lemma~5.2]{LMO3} (and  \cite[\S 2.2]{Eschenburg:1988},  \cite[Lemma~2.5, Proposition~2.6]{GH}) for \eqref{it:zeta1} and \eqref{it:zeta2}.
Then, \eqref{it:zeta3} is a direct consequence of \eqref{it:zeta2}.
Indeed, if $\dd\bb_\eta(v)=\dd\bb_\eta(w)=1$ for distinct timelike vectors $v,w \in \Omega_x \cap F^{-1}(1)$, then we have $\dd\bb_\eta((v+w)/2)=1$ and $F((v+w)/2)>1$.
However, this is a contradiction since $\bb_\eta(y) \ge \bb_\eta(x) +\tau(x,y)$ for $x \le y$.
Finally, \eqref{it:zeta4} is seen as in \cite[\S 2.2]{Eschenburg:1988}, by noticing that $\bb_\eta(\zeta(T))-\tau(\cdot,\zeta(T))$ and $\bb_\eta(x)+\tau(x,\cdot)$ are upper and lower support functions for $\bb_\eta$ at $\zeta(t)$ for $T>t$, respectively.
\end{proof}

Under the GTCC, $\bb_\eta$ is in fact differentiable almost everywhere near $\eta$ (see \cite[Theorem~5.3]{LMO3}, \cite[Theorem~3.7]{GH}).

\begin{theorem}[Regularity of $\bb_{\eta}$]\label{th:GH3.7}
Assume that the GTCC for $\eta$ holds at $x \in I(\eta)$.
Then, $\bb_{\eta}$ is Lipschitz continuous, and hence differentiable almost everywhere, on a neighborhood of $x$.
\end{theorem}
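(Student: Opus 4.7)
The plan is to produce, near any point $y$ of a small neighborhood of $x$, a smooth upper support function for $\bb_\eta$ built from an initial segment of a timelike asymptote emanating at $y$, and then extract a two-sided Lipschitz bound by applying this at both endpoints of a short displacement. By Lemma~\ref{lm:GH3.3}, I can first choose a neighborhood $U$ of $x$ such that every $y \in U$ admits a unit-speed timelike asymptote $\zeta_y\colon[0,\infty)\lra M$ of $\eta$. Fixing a small parameter $t_0>0$, the fact that $\zeta_y$ is a ray gives $\tau(y,\zeta_y(t_0))=t_0$, and Lemma~\ref{lm:zeta}\eqref{it:zeta1} supplies the upper support function
\[ \rho_y(z):=\bb_\eta(y)+t_0-\tau\bigl(z,\zeta_y(t_0)\bigr) \]
for $\bb_\eta$ at $y$, valid on $I^+(\eta(0))\cap I^-(\zeta_y(t_0))$.

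For $y,z\in U$ sufficiently close, openness of the chronological past yields $z\in I^-(\zeta_y(t_0))$ and $y\in I^-(\zeta_z(t_0))$, so the support inequalities at $y$ and at $z$ combine to
\[ \bb_\eta(z)-\bb_\eta(y)\le \tau\bigl(y,\zeta_y(t_0)\bigr)-\tau\bigl(z,\zeta_y(t_0)\bigr), \qquad \bb_\eta(y)-\bb_\eta(z)\le \tau\bigl(z,\zeta_z(t_0)\bigr)-\tau\bigl(y,\zeta_z(t_0)\bigr). \]
It therefore suffices to show that the family $\{\tau(\cdot,\zeta_y(t_0))\}_{y\in U}$ is smooth on a common neighborhood of $x$ with uniformly bounded gradient relative to an auxiliary Riemannian metric. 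The two displayed inequalities then give the Lipschitz estimate, and Rademacher's theorem produces differentiability almost everywhere.

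The main obstacle is precisely this uniform regularity, because asymptotes $\zeta_y$ are not a priori unique and the Finsler geodesic spray depends nonlinearly on its reference vector. To handle it, I would shrink $t_0$ so that the segment $\zeta_x|_{[0,t_0]}$ is free of conjugate points and $\zeta_x(t_0)$ lies in the chronological future of a full neighborhood of $x$; then $\tau(\cdot,\zeta_x(t_0))$ is smooth there. A limit-curve and compactness argument, invoking continuity of the Finsler exponential map, openness of the future-directed timelike cone, and stability of the no-conjugate-point condition under small perturbations of initial data (exactly the tools used in \cite{LMO3,GH}), then shows that for $y$ in a possibly smaller precompact neighborhood $U'\Subset U$ the endpoints $\zeta_y(t_0)$ remain in a compact region and the functions $\tau(\cdot,\zeta_y(t_0))$ are smooth on a fixed neighborhood of $x$ with gradients uniformly bounded in $y$. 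Once this uniform control is secured, the two-sided support estimate above closes the argument.
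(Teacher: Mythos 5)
Your proposal correctly reconstructs the argument of \cite[Theorem~3.7]{GH} and \cite[Theorem~5.3]{LMO3}, which the paper cites without reproducing: two-sided upper support functions from Lemma~\ref{lm:zeta}\eqref{it:zeta1} combined with a uniform gradient bound, the latter coming from the compactness of the asymptote initial tangents $\dot\zeta_y(0)$ in $\Omega \cap F^{-1}(1)$ (this compactness, via \cite[Lemma~3.4]{GH}, is exactly the ingredient the paper also invokes at the end of the proof of Lemma~\ref{lm:concave}). The reasoning is sound and matches the cited approach.
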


We remark that the Lipschitz continuity is understood with respect to an auxiliary Riemannian metric.

Recall that a function $u$ defined on an open set $U \subset M$ is said to be \emph{semiconvex} with respect to a (background) Riemannian metric $h$ if there exists a constant $c \in \mathbb{R}$ such that for every $x \in U$,
\begin{equation*}
    \liminf_{w \to 0} \frac{u(\exp_x^h(w)) + u(\exp_x^h(-w)) - 2u(x)}{h(w,w)} \geq c,
\end{equation*}
where $w \in T_xM$ and $\exp^h$ denotes the exponential map for $h$.
We say that $u$ is \emph{semiconcave} if $-u$ is semiconvex.
With the help of the semiconvexity of the time separation function in  \cite[Proposition~3.9]{BO} (along the lines  of  \cite[Proposition~3.4]{McCann:2020}), we have a uniform estimate on the semiconcavity of $\bb_{\eta,t}$ (cf. \cite[Proposition~5]{quintet}) as follows. 

\begin{lemma}[Equi-semiconcavity of $\bb_{\eta,t}$]\label{lm:concave}
Fix a smooth Riemannian metric $h$ of $M$, and let $W \subset I(\eta)$ be an open set including $\eta((0,\infty))$ as in Proposition~$\ref{pr:GH5.1}$.
Then, for each $x_0 \in W$, there exist a neighborhood $U$ of $x_0$, $T>0$ and a constant $C=C(x_0,T) \in \R$ such that
\begin{equation*}
\limsup_{r \to 0} \frac{\bb_{\eta,t}(\exp^h_x(-rv)) +\bb_{\eta,t}(\exp^h_x(rv)) -2\bb_{\eta,t}(x)}{r^2 h(v,v)} \le C
\end{equation*}
for all $x \in U$, $v \in T_xM \setminus \{0\}$ and $t \ge 2T$.
\end{lemma}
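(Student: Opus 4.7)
The plan is to derive the equi-semiconcavity of $\bb_{\eta,t}$ from the local semiconvexity of the time separation function $\tau(\cdot,\eta(t))$ established in \cite[Proposition~3.9]{BO}, and to make that estimate uniform in $x \in U$ and $t \ge 2T$ by a compactness argument fueled by the GTCC.

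Since $x_0 \in W$ and the GTCC for $\eta$ holds throughout $W$ by Proposition~\ref{pr:GH5.1}, Lemma~\ref{lm:GH3.3} yields a neighborhood $U_0$ of $x_0$ and some $T_0 > 0$ such that, for every $x \in U_0$ and $t > T_0$, there is a unit-speed maximizing timelike geodesic $\gamma_{x,t}$ from $x$ to $\eta(t)$. First I would shrink $U_0$ to a relatively compact $U \Subset W$ and enlarge $T_0$ to some $T>0$, and then argue by contradiction that
\[ K := \bigl\{ \dot\gamma_{x,t}(0) : x \in U,\ t \ge 2T \bigr\} \]
is relatively compact in $\Omega$, i.e., bounded and bounded away from the lightcone. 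If this failed, there would be sequences $x_k \to x_* \in \overline U \subset W$ and $t_k \to \infty$ with $\dot\gamma_{x_k,t_k}(0)$ approaching the null cone; by the limit curve argument recalled just before Definition~\ref{df:coray}, a subsequence of the $\gamma_{x_k,t_k}$ would converge to a generalized co-ray of $\eta$ emanating from $x_*$ whose initial direction is lightlike, contradicting the GTCC at $x_*$ guaranteed by Proposition~\ref{pr:GH5.1}.

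Once compactness of $K$ is in place, for each $x \in U$ and $t \ge 2T$ the statement \cite[Proposition~3.9]{BO}, applied to $\tau(\cdot,\eta(t))$ at $x$, yields a semiconvexity constant depending continuously on $x$, on the initial velocity $\dot\gamma_{x,t}(0)$ of the connecting maximizing geodesic, and on the auxiliary Riemannian metric $h$ on a fixed relatively compact neighborhood of $\overline U$. Continuous dependence combined with compactness of $\overline U \times K$ then produces a single constant $-C = -C(x_0,T)$ controlling the second symmetric differences of $\tau(\cdot,\eta(t))$ from below at every $(x,t)$ of interest. Since $\bb_{\eta,t}(\cdot) = t - \tau(\cdot,\eta(t))$, this lower bound translates directly into the claimed upper bound on the second symmetric differences of $\bb_{\eta,t}$, with the same constant $C$.

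The main obstacle is the compactness claim for $K$: the local semiconvexity constant for $\tau(\cdot,\eta(t))$ degenerates as the connecting geodesic approaches the null cone, so without a uniform bound keeping $\dot\gamma_{x,t}(0)$ away from $\partial\Omega$ one could not extract a single $C$ valid for all $t \ge 2T$. This is precisely where the propagation of the GTCC from $\eta$ to a full neighborhood $W$, furnished by Proposition~\ref{pr:GH5.1}, is essential; the passage from $T_0$ to $2T$ simply provides the safety margin needed to invoke the limit curve argument along tails of the geodesics $\gamma_{x,t}$.
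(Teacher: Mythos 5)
Your argument correctly identifies the role of the GTCC and Proposition~\ref{pr:GH5.1} in producing a compact set of initial velocities $K \subset \Omega \cap F^{-1}(1)$, and the compactness claim for $K$ itself is established in the same way as in the paper (citing \cite[Lemma~3.4]{GH}). However, there is a genuine gap in the step where you apply \cite[Proposition~3.9]{BO} directly to $\tau(\cdot,\eta(t))$ and assert that the resulting semiconvexity constant ``depends continuously on $x$, on the initial velocity $\dot\gamma_{x,t}(0)$ \ldots and on the auxiliary Riemannian metric $h$.'' This is not justified: the semiconvexity constant from \cite[Proposition~3.9]{BO} is produced by an index-form estimate evaluated along the \emph{entire} maximizing geodesic from $x$ to $\eta(t)$, whose $\tau$-length is approximately $t$ and therefore escapes to infinity as $t \to \infty$. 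The constant depends on curvature and connection data along the whole geodesic, not merely on its initial data, so compactness of $K$ alone does not yield a uniform bound over all $t \ge 2T$ without an additional argument (e.g.\ a monotonicity claim that the constant improves as the target recedes), which you do not supply and which is not automatic in this generality.

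The paper closes this gap with a support-function trick that your proposal omits. Given the maximizing geodesic $\zeta$ from $x$ to $\eta(t)$, it uses the reverse triangle inequality to show that
\[ \rho(z) := \bb_{\eta,t}(x) + T - \tau\bigl(z,\zeta(T)\bigr) \]
is an upper support function for $\bb_{\eta,t}$ at $x$, with $T$ \emph{fixed}. This reduces the problem to estimating the second symmetric differences of $z \longmapsto \tau(z,\zeta(T))$, which involves only the geodesic segment $\zeta|_{[0,T]}$ of uniformly bounded parameter length; the index-form bound from \cite[Proposition~3.9]{BO} applied to that segment depends only on smooth quantities there, and it is at this point that the compactness of the initial velocities (together with relative compactness of $U$) delivers a single constant $C(x_0,T)$ valid for all $x \in U$, $t \ge 2T$. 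Without truncating at $\zeta(T)$, your uniformity claim is not established.
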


\begin{proof}
Given $x_0 \in W$, take a neighborhood $U$ of $x_0$ and $T>0$ as in Lemma~\ref{lm:GH3.3}.
We fix $x \in U$ and $t \ge 2T$, and give a modification of Lemma~\ref{lm:zeta}\eqref{it:zeta1} to obtain an upper support function of $\bb_{\eta,t}$.
Let $\zeta\colon [0,\tau(x,\eta(t))] \lra M$ be a maximizing geodesic from $x$ to $\eta(t)$.
Note that
\[ \tau\bigl( x,\eta(t) \bigr)
\ge \tau\bigl( x,\eta(T) \bigr) +\tau\bigl( \eta(T),\eta(t) \bigr)
\ge t-T \ge T \]
by the reverse triangle inequality.
Then, we infer that, for $z$ close to $x$,
\begin{align*}
\bb_{\eta,t}(z) -\bb_{\eta,t}(x)
&= \tau\bigl( x,\eta(t) \bigr) -\tau\bigl( z,\eta(t) \bigr) \\
&\le \tau\bigl( x,\eta(t) \bigr) -\tau\bigl( z,\zeta(T) \bigr) -\tau\bigl( \zeta(T),\eta(t) \bigr) \\
&= \tau\bigl( x,\zeta(T) \bigr) -\tau\bigl( z,\zeta(T) \bigr).
\end{align*}
Thus,
\[ \rho(z):=\bb_{\eta,t}(x) +T -\tau\bigl( z,\zeta(T) \bigr) \]
is an upper support function of $\bb_{\eta,t}$ at $x$.

Hence, for $\gamma(r):=\exp^h_x(rv)$ with $v \in T_xM$ such that $h(v,v)=1$, we have
\begin{align*}
\frac{\bb_{\eta,t}(\gamma(-r)) +\bb_{\eta,t}(\gamma(r)) -2\bb_{\eta,t}(x)}{r^2}
&\le \frac{\rho(\gamma(-r)) +\rho(\gamma(r)) -2\rho(x)}{r^2} \\
&= -\frac{\tau(\gamma(-r),\zeta(T)) +\tau(\gamma(r),\zeta(T)) -2\tau(x,\zeta(T))}{r^2},
\end{align*}
provided that $r>0$ is sufficiently small.
The right-hand side is bounded from above as follows.
Let $P$ be the vector field along the reparametrized geodesic $s \longmapsto \zeta(sT)$, $s \in [0,1]$, such that $P(0)=v$ and $D^{\dot{\zeta}}_{\dot{\zeta}} P \equiv 0$.
We also consider
\[ V(s) :=(1-s) P(s), \qquad
V^{\perp}(s) :=V(s) +g_{\dot{\zeta}}\bigl( V(s),\dot{\zeta}(sT) \bigr) \dot{\zeta}(sT). \]
Observe that $g_{\dot{\zeta}}(V^\perp(s),\dot{\zeta}(sT))=0$ and, by \eqref{eq:g_eta} or \eqref{eq:g_X},
\[ D^{\dot{\zeta}}_{\dot{\zeta}} V^{\perp}(s)
=-P(s) +g_{\dot{\zeta}}\bigl( -P(s),\dot{\zeta}(sT) \bigr) \dot{\zeta}(sT)
=:-P^{\perp}(s). \]
Then, it follows from the calculation in  \cite[Proposition~3.9]{BO} that
\begin{align*}
&\liminf_{r \to 0} \frac{\tau(\gamma(-r),\zeta(T)) +\tau(\gamma(r),\zeta(T)) -2\tau(x,\zeta(T))}{r^2} \\
&\ge -T \int_0^1 \Bigl\{ g_{\dot{\zeta}}(D^{\dot{\zeta}}_{\dot{\zeta}} V^{\perp},D^{\dot{\zeta}}_{\dot{\zeta}} V^{\perp}) -g_{\dot{\zeta}}\bigl( R_{\dot{\zeta}}(V^\perp),V^\perp \bigr) \Bigr\} \,\dd s
+g_{\dot{\zeta}}\bigl( D^{\dot{\zeta}}_{\dot{\gamma}} \dot{\gamma}(0),\dot{\zeta}(0) \bigr) \\
&= -T \int_0^1 \Bigl\{ g_{\dot{\zeta}}(P^\perp,P^\perp) -(1-s)^2 g_{\dot{\zeta}}\bigl( R_{\dot{\zeta}}(P^\perp),P^\perp \bigr) \Bigr\} \,\dd s
+g_{\dot{\zeta}}\bigl( D^{\dot{\zeta}}_{\dot{\gamma}} \dot{\gamma}(0),\dot{\zeta}(0) \bigr).
\end{align*}
Since the right-hand side depends only on smooth quantities on $\zeta([0,T])$, and all such initial vectors $\dot{\zeta}(0)$ belong to a compact set in $\Omega \cap F^{-1}(1)$ (see  \cite[Lemma~3.4]{GH}), it is thus bounded from below by some constant $-C(x_0,T)$.
This yields the desired semiconcavity of $\bb_{\eta,t}$. 
\end{proof}

\subsection{Busemann functions for straight lines}\label{ssc:lines}

In splitting theorems, we assume the existence of a timelike \emph{straight line} $\eta\colon \R \lra M$, i.e., $\tau(\eta(s),\eta(t))=t-s$ holds for all $s<t$.
We will denote by $\bb_{\eta}$ the Busemann function associated with the ray $\eta|_{[0,\infty)}$.
Moreover, the curve $\bar{\eta}(t):=\eta(-t)$, $t \in [0,\infty)$, is a timelike ray of unit speed
with respect to the reverse structure $\rev{L}(v)=L(-v)$ (recall Subsection~\ref{ssc:wLF}).
Hence, we can introduce the corresponding Busemann function as
\[ \ol{\bb}_{\eta}(x):=\lim_{t \to \infty} \big\{ t-\tau\big( \eta(-t),x \big) \big\} \]
($\tau$ is with respect to $L$).
It follows from the reverse triangle inequality that
\begin{equation}\label{eq:b+b}
\bb_{\eta}(x)+\ol{\bb}_{\eta}(x) \ge \lim_{t \to \infty} \big\{ 2t -\tau\big( \eta(-t),\eta(t) \big) \big\} =0,
\end{equation}
and equality holds on $\eta$.

For straight lines we modify $I(\eta)$ in \eqref{eq:I(eta)} into
\[ I(\eta) :=\Bigg( \bigcup_{t>0} I^+\big( \eta(-t) \big) \Bigg)
 \cap \Bigg( \bigcup_{t>0} I^- \big( \eta(t) \big) \Bigg). \]
Note that $I(\eta) =\bigcup_{s<0} I(\eta|_{[s,\infty)})$,
therefore, the previous results considered for rays continue to hold on $I(\eta)$ for timelike straight lines $\eta$.
We infer from Lemma~\ref{lm:zeta}\eqref{it:zeta2} the following (see  \cite[Lemma~5.5]{LMO3}, \cite[Lemma~4.1]{Eschenburg:1988}).

\begin{lemma}\label{lm:line}
Assume that $x \in I(\eta)$ satisfies $\bb_\eta(x)+\ol{\bb}_\eta(x)=0$,
and let $\zeta^+$ and $\zeta^-$ be timelike asymptotes of $\eta$ and $\bar{\eta}$ of unit speed
emanating from $x$, respectively.
Then, the concatenation $\zeta$ of $\zeta^+$ and $\zeta^-$ is a timelike straight line.
\end{lemma}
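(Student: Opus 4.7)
The plan is to exploit Busemann-function bookkeeping to show that the concatenation $\zeta$ realizes the time separation on every subinterval. I would parametrize so that $\zeta(0)=x$, $\zeta(t)=\zeta^+(t)$ for $t\ge 0$, and $\zeta(t)=\zeta^-(-t)$ for $t\le 0$, making $\zeta$ future-directed and of unit $L$-speed on each side of $0$. The key identities to establish first are
\begin{equation*}
\bb_\eta\bigl(\zeta(t)\bigr) = \bb_\eta(x) + t, \qquad \ol{\bb}_\eta\bigl(\zeta(t)\bigr) = \ol{\bb}_\eta(x) - t
\end{equation*}
for all $t$ in the domain of $\zeta$. For $t\ge 0$, Lemma~\ref{lm:zeta}\eqref{it:zeta2} applied to the $L$-asymptote $\zeta^+$ yields the first identity; applied in $(M,\rev{L})$ to the $\rev{L}$-asymptote $\zeta^-$, it yields the second for $t\le 0$. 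The remaining two cases follow by combining the reverse triangle inequality through $x$ (which yields, for example, $\bb_\eta(\zeta(t))\le \bb_\eta(x)+t$ for $t\le 0$) with the inequality $\bb_\eta+\ol{\bb}_\eta\ge 0$ from \eqref{eq:b+b} and the hypothesis $\bb_\eta(x)+\ol{\bb}_\eta(x)=0$ to produce matching lower bounds.

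With these identities in hand, for any $s<t$ I would upper-bound $\tau(\zeta(s),\zeta(t))$ by the Lipschitz estimate $\bb_\eta(\zeta(t))-\bb_\eta(\zeta(s))\ge \tau(\zeta(s),\zeta(t))$, itself an immediate consequence of the reverse triangle inequality for $\tau$; together with the identities above this gives $\tau(\zeta(s),\zeta(t))\le t-s$. For the lower bound, when $s$ and $t$ share a sign, $\zeta^+$ or $\zeta^-$ is itself a maximizing ray and equality is automatic; when $s<0<t$, the reverse triangle inequality through $x$ directly yields $\tau(\zeta(s),\zeta(t))\ge \tau(\zeta(s),x)+\tau(x,\zeta(t)) = -s+t$. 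Thus $\tau(\zeta(s),\zeta(t))=t-s$ for all $s<t$, and since the parametrization is already unit-speed on each side, the standard fact that a time-separation-realizing causal curve is a smooth timelike geodesic in its natural parametrization promotes $\zeta$ to a smooth timelike straight line.

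The delicate point is the bookkeeping for the reverse structure: verifying that being an asymptote of $\bar\eta$ in $(M,\rev{L})$ translates correctly into $L$-causal information, so that $\zeta^-$ is past-directed in $L$ with unit $L$-speed and its segments are $L$-maximizing toward the past, and checking that $\zeta(t)\in I(\eta)$ on both sides so that the Busemann functions are finite along $\zeta$ and Lemma~\ref{lm:zeta}\eqref{it:zeta2} (together with its $\rev{L}$-analog $I(\zeta^-)\subset I(\bar\eta)$) is genuinely applicable.
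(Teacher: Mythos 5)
Your argument correctly reconstructs the standard proof which the paper delegates to Eschenburg's Lemma~4.1 and [LMO3, Lemma~5.5]: extend the identities of Lemma~\ref{lm:zeta}\eqref{it:zeta2} along the whole concatenation using \eqref{eq:b+b} and the hypothesis $\bb_\eta(x)+\ol{\bb}_\eta(x)=0$, then pinch $\tau(\zeta(s),\zeta(t))$ between the Busemann--Lipschitz upper bound and the reverse-triangle lower bound through $x$, and finish with the fact that a unit-speed causal curve realizing $\tau$ is a (smooth, unbroken) timelike geodesic. This is the approach the paper intends, so the proposal is correct and matches it.
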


Precisely, $\zeta^-$ is an asymptote of $\bar{\eta}$ of unit speed with respect to $\rev{L}$,
and the concatenation $\zeta$ is given by $\zeta(t):=\zeta^+(t)$ for $t \ge 0$ and $\zeta(t):=\zeta^-(-t)$ for $t<0$.
We say that such a straight line $\zeta$ is \emph{bi-asymptotic} to $\eta$.

\section{$p$-harmonicity and smoothness of Busemann functions}\label{sc:harm}

With the basic properties of the Busemann function $\bb_\eta$ in the previous section in hand, we proceed to the analysis of $\bb_\eta$ using the $p$-d'Alembertian $\square_{\fm,p}$.
We begin with the wider range $N \in (-\infty,1] \cup [n,\infty]$, and restrict ourselves to $N \in (-\infty,0) \cup [n,\infty]$ at the last step (see Remark~\ref{rm:Wylie}).
\subsection{Maximum principle and $p$-harmonicity}\label{ssc:max}

We first show that, in a rather standard way, $\bb_\eta$ is \emph{superharmonic} with respect to $\rev\square_{\fm,p}$ (cf. \cite[Corollary~8]{quintet}).

\begin{lemma}[$p$-superharmonicity]\label{lm:sharm}
Let $(M,L,\fm)$ be a timelike geodesically complete, weighted Finsler spacetime as in Theorem~$\ref{th:LFsplit}$, but with $N \in (-\infty,1] \cup [n,\infty]$.
Then, the Busemann functions $\bb_\eta,\ol{\bb}_\eta$ satisfy $\rev{\square}_{\fm,p} \bb_\eta \le 0$ and $\square_{\fm,p} \ol{\bb}_\eta \le 0$ in the weak sense on a neighborhood of $\eta(\R)$. 
\end{lemma}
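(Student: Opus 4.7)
The plan is to follow the standard recipe used in \cite{quintet}: extract $C^2$-upper support functions of $\bb_\eta$ from the time separation function along timelike asymptotes, control their reverse $p$-d'Alembertian via the comparison estimate of Lemma~\ref{lm:p-comp}, and pass to the limit to obtain the weak superharmonicity. I will describe the argument for $\rev{\square}_{\fm,p}\bb_\eta \le 0$; the claim for $\ol{\bb}_\eta$ follows by repeating it on the reverse Finsler spacetime $(M,\rev L,\fm)$, noting that $\ol{\bb}_\eta$ is exactly the Busemann function of $\bar\eta$ for $\rev L$.

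First, I would fix $x_0 \in \eta(\R)$ and use Proposition~\ref{pr:GH5.1} together with Lemma~\ref{lm:GH3.3} to pick a neighborhood $U$ of $x_0$ on which the GTCC for $\eta$ holds and every $x \in U$ admits a timelike unit-speed asymptote $\zeta_x\colon [0,\infty) \lra M$. For each $x \in U$ and each $T > 0$, Lemma~\ref{lm:zeta}\eqref{it:zeta1} then provides the upper support function
\[
\rho_{x,T}(z) := \bb_\eta(x) + T - \tau\bigl(z,\zeta_x(T)\bigr)
\]
of $\bb_\eta$ at $x$. Since $\zeta_x|_{[0,T]}$ is maximizing and uniqueness of short maximizers keeps $x$ off the past cut locus of $\zeta_x(T)$ along this segment, $\rho_{x,T}$ is smooth on a neighborhood of $x$.

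Writing $\rho_{x,T}$ as a constant plus $-\ol f$ with $\ol f(z):=\tau(z,\zeta_x(T))$, I would then apply the reverse $p$-d'Alembertian comparison \eqref{eq:p-comp'} along the reparametrized geodesic $s \longmapsto \zeta_x(T+s)$, $s \in [-T,0]$, getting
\[
\rev{\square}_{\fm,p}\rho_{x,T}(x) \le \e^{\frac{2(\ez-1)}{n-1}\psi_\fm(\dot\zeta_x(0))}\bigg( c \int_0^T \e^{\frac{2(\ez-1)}{n-1}\psi_\fm(\dot\zeta_x(s))}\,\dd s \bigg)^{-1}.
\]
By timelike geodesic completeness and timelike $\ez$-completeness (or, when $N \in [n,\infty)$, by the trivial bound with $\ez=1$ via \eqref{eq:cmplt}), the integral in the denominator diverges as $T \to \infty$, so the right-hand side tends to $0$. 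Using the compactness of the set of initial tangents $\dot\zeta_x(0)$ for $x$ in a compact subset of $U$ (exactly as in the proof of Lemma~\ref{lm:concave}), this convergence is uniform in $x$.

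To convert this support-function bound into weak superharmonicity, I would invoke the local semiconcavity of $\bb_\eta$ on $U$: Lemma~\ref{lm:concave} gives uniform semiconcavity of $\bb_{\eta,t}$ for $t\ge 2T$, and passing to the monotone (non-increasing in $t$) pointwise limit preserves the bound, so $\bb_\eta$ is locally semiconcave on $U$ (hence locally Lipschitz and twice differentiable $\fm$-a.e.\ by Alexandrov). Combining semiconcavity with the uniform availability of smooth upper supports $\rho_{x,T}$ at every $x \in U$ whose reverse $p$-d'Alembertian tends to $0$, the ellipticity of $\rev{\square}_{\fm,p}$ (Remark~\ref{rm:ellip}) forces the pointwise inequality $\rev{\square}_{\fm,p}\bb_\eta(x) \le 0$ at every twice-differentiability point, and a standard Calabi-type/viscosity argument then promotes this to the weak (distributional) statement $\rev{\square}_{\fm,p}\bb_\eta \le 0$ on $U$. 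Taking a locally finite cover of $\eta(\R)$ by such neighborhoods completes the argument.

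The main obstacle is this last step, bridging the pointwise upper-support information into the weak formulation. In the Lorentzian setting this is carried out using convexity of the $p$-energy functional together with the Calabi trick; in the Finsler setting one has to rely more heavily on Lemma~\ref{lm:Lag} (convexity of the $q$-Lagrangian) and Remark~\ref{rm:ellip} to overcome the nonlinearity of the Legendre transform. No fundamentally new idea is required beyond what is already encoded in the ellipticity of the $p$-d'Alembertian, but the bookkeeping needed to check that the viscosity-to-weak-supersolution equivalence transfers verbatim from the positive definite $p$-Laplacian theory is the most technical part of the proof.
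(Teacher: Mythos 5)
Your proposal is sound in outline but follows a genuinely different, more classical route than the paper's. You build smooth upper support functions $\rho_{x,T}$ anchored on asymptotes $\zeta_x$, obtain a pointwise $o(1)$ bound on $\rev\square_{\fm,p}\rho_{x,T}(x)$ via Lemma~\ref{lm:p-comp}, deduce the pointwise inequality at Alexandrov points of the semiconcave $\bb_\eta$ using ellipticity, and then propose a Calabi/viscosity passage to the distributional statement --- a step you yourself flag as the main obstacle. The paper avoids that passage entirely. It instead sets $\tau_k(x):=-\tau(x,\eta(k))$ (anchored at $\eta(k)$, not on asymptotes), shows $\nabla\tau_k\to\nabla\bb_\eta$ a.e.\ via Lemma~\ref{lm:zeta}\eqref{it:zeta3} and Theorem~\ref{th:GH3.7}, and works directly in the weak formulation through integration by parts. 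The decisive simplification is that $F^*(-\dd\bb_\eta)=F^*(-\dd\tau_k)=1$ holds $\fm$-a.e., so the nonlinear $p$-weight $F^*(\cdot)^{p-2}$ appearing in the weak formula for $\square_{\fm,p}$ is trivially $\equiv 1$ and $\int\phi\cdot\rev\square_{\fm,p}\bb_\eta\,\dd\fm=-\int\dd\phi(\rev\nabla\bb_\eta)\,\dd\fm$ collapses to the ordinary weak d'Alembertian; dominated convergence for the gradients and Fatou for the comparison bound on $\rev\square_{\fm,p}\tau_k$ then close the argument without any viscosity machinery. Your route can indeed be completed --- semiconcavity makes the singular part of the distributional Hessian a non-positive matrix measure, which contracted with the positive-definite principal symbol of $\rev\square_{\fm,p}$ from Remark~\ref{rm:ellip} contributes non-positively, so the a.e.\ Alexandrov bound does propagate to the weak inequality --- but the paper's formulation renders that bookkeeping unnecessary, and is shorter for that reason.
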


Precisely, we assume timelike $\ez$-completeness for $\ez$ as in \eqref{eq:erange} also when $N \in [0,1]$.

\begin{proof}
It is sufficient to find a neighborhood of $\eta(0)$ on which the claim holds.
Lemma~\ref{lm:GH3.3} and Proposition~\ref{pr:GH5.1} provide a neighborhood $U$ of $\eta(0)$ such that, for sufficiently large $k \in \N$, there is a maximizing timelike geodesic from each $x \in U$ to $\eta(k)$.
Moreover, such a sequence of maximizing geodesics $\zeta_k \colon [0,\tau(x,\eta(k))] \lra M$ converges uniformly (up to  subsequences and with respect to any complete auxiliary Riemannian metric on $M$) to a timelike asymptote $\zeta$ of $\eta$.
Arguing as in  \cite[Lemma 3.4]{GH}, up to passing to another subsequence, also the  initial vectors $\dot \zeta_k(0)$ converge to $\dot\zeta(0)$, by smooth dependence of geodesics  on initial data.

Put $\tau_k(x):=-\tau(x,\eta(k))$, which is Lipschitz and hence differentiable almost everywhere on $U$ by \cite[Theorem 3.11]{BO}.
Moreover, since $\bb_{\eta,t} \searrow \bb_{\eta}$ as $t \to \infty$ and $\bb_{\eta}$ is Lipschitz on $U$ (Theorem~\ref{th:GH3.7}), $(\tau_k)_k$ is uniformly bounded on $U$.
Combining this with the equi-semiconcavity of $(\tau_k)_k$ from Lemma~\ref{lm:concave}, we infer that, after possibly shrinking $U$, the family $(\tau_k)_k$ is uniformly Lipschitz on $U$.
Moreover, by Lemma~\ref{lm:zeta}\eqref{it:zeta3}, we have
\[
\nabla(-\tau_k)(x)\to \nabla(-\bb_\eta)(x)\qquad\text{as }k\to\infty
\]
for almost every $x\in U$ at which $\bb_\eta$ and $\tau_k$ are differentiable (see also Theorem~\ref{th:GH3.7}).
Therefore, for any nonnegative test function $\phi\in C_c^\infty(U)$, the dominated convergence theorem yields
\[
\lim_{k\to\infty}\int_U \dd\phi\bigl(\rev{\nabla}\tau_k\bigr)\,\dd\fm
=
\int_U \dd\phi\bigl(\rev{\nabla}\bb_\eta\bigr)\,\dd\fm,
\]
where we also used $\rev{\nabla}\bb_\eta=-\nabla(-\bb_\eta)$.

Now, given $x \in U$, it follows from the reverse form of the $p$-d'Alembertian comparison estimate \eqref{eq:p-comp'} that
\begin{equation}\label{eq:tau_k}
\rev{\square}_{\fm,p} \tau_k (x)
 \le \e^{\frac{2(\ez-1)}{n-1}\psi_\fm (\dot{\zeta}_k(0))}
 \biggl( c\int_0^{r_k} \e^{\frac{2(\ez-1)}{n-1}\psi_\fm (\dot{\zeta}_k(s))} \,\dd s \biggr)^{-1},
\end{equation}
where $r_k:=\tau(x,\eta(k))$ and $c=c(N,\ez)>0$ as in \eqref{eq:c}.
We claim that
\begin{equation}\label{eq:tau_lim}
\limsup_{k \to \infty} \rev{\square}_{\fm,p} \tau_k (x) \le 0.
\end{equation}
This is indeed immediate from \eqref{eq:tau_k} and the future timelike $\ez$-completeness (recall Definition~\ref{df:cmplt}).
Here, for completeness, we also give a proof under \eqref{eq:cmplt}.
If \eqref{eq:tau_lim} does not hold true,
then we deduce from \eqref{eq:tau_k} that there is a subsequence of $(\tau_k)_k$,
again denoted by $(\tau_k)_k$ for brevity, such that
\[ \e^{\frac{2(\ez-1)}{n-1}\psi_\fm (\dot{\zeta}_k(0))}
 \biggl( c\int_0^{r_k} \e^{\frac{2(\ez-1)}{n-1}\psi_\fm (\dot{\zeta}_k(s))} \,\dd s \biggr)^{-1} \ge C \]
for some constant $C>0$.
By passing to the limit as $k \to \infty$ (up to a subsequence), we find an asymptote $\zeta\colon [0,\infty) \lra M$ of $\eta$ with $\zeta(0)=x$, which satisfies
\[ \e^{\frac{2(\ez-1)}{n-1}\psi_\fm (\dot{\zeta}(0))}
 \biggl( c\int_0^{\infty} \e^{\frac{2(\ez-1)}{n-1}\psi_\fm (\dot{\zeta}(s))} \,\dd s \biggr)^{-1} \ge C. \]
This contradicts \eqref{eq:cmplt}, thus we obtain \eqref{eq:tau_lim}.
We remark that, in the case of $N \in [n,\infty)$, we can take $\ez=1$ and timelike $1$-completeness always holds.

Under a uniform bound $\rev{\square}_{\fm,p} \tau_k \le C$ on $U$ guaranteed by \eqref{eq:tau_k} and the smoothness of $\psi_\fm$, Fatou's lemma and $\phi \ge 0$ imply
\[ \lim_{k \to \infty} \int_U \phi \cdot \rev{\square}_{\fm,p} \tau_k \,\dd\fm
 \le \int_U \phi \cdot \limsup_{k \to \infty} \rev{\square}_{\fm,p} \tau_k \,\dd\fm \le 0. \]
Therefore, recalling that $F^* (-\dd \bb_{\eta}) =F^*(-\dd\tau_k) =1$ $\fm$-a.e.\ on $U$ (cf.\ Lemma~\ref{lm:zeta}\eqref{it:zeta3}), we obtain
\begin{align*}
\int_U \phi \cdot \rev{\square}_{\fm,p} \bb_\eta \,\dd\fm
&= \int_U \phi \cdot \rev{\square}_\fm \bb_\eta \,\dd\fm
=-\int_U \dd\phi \bigl( \rev{\nabla} \bb_\eta \bigr) \,\dd\fm \\
&=-\lim_{k \to \infty} \int_U \dd\phi \bigl( \rev{\nabla} \tau_k \bigr) \,\dd\fm
=\lim_{k \to \infty} \int_U \phi \cdot \rev{\square}_{\fm} \tau_k \,\dd\fm \\
&= \lim_{k \to \infty} \int_U \phi \cdot \rev{\square}_{\fm,p} \tau_k \,\dd\fm \le 0,
\end{align*}
which completes the proof of $\rev{\square}_{\fm,p} \bb_\eta \le 0$.
The other inequality $\square_{\fm,p} \ol{\bb}_\eta \le 0$ is shown in the same way by using \eqref{eq:p-comp} and past timelike $\ez$-completeness.
\end{proof}

The next proposition, along the lines of \cite[Proposition~9]{quintet}, is the key step where the ellipticity of the $p$-d'Alembertian enables us to apply the maximum principle.

\begin{proposition}[$p$-harmonicity]\label{pr:max}
In the same situation as Lemma~$\ref{lm:sharm}$, there exists a neighborhood $W$ of $\eta(\R)$ on which we have $\bb_{\eta}+\ol{\bb}_{\eta}=0$ pointwise as well as $\square_{\fm,p}(-\bb_\eta) =\square_{\fm,p}\ol{\bb}_\eta =0$ in the weak sense.
In particular, $\bb_\eta$ and $\ol{\bb}_\eta$ are $C^{1,1}_{\loc}$ on $W$.
\end{proposition}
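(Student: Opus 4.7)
The plan is to apply a strong maximum principle argument for the elliptic $p$-d'Alembertian to the non-negative function $\bb_\eta + \ol{\bb}_\eta$, which vanishes along $\eta(\R)$ by \eqref{eq:b+b}, and then extract $C^{1,1}_{\loc}$-regularity from a semiconcave/semiconvex dichotomy. The first preparatory step is to rewrite the two one-sided inequalities of Lemma~\ref{lm:sharm} with respect to the forward operator: the identities $\rev{\LL}^*(\omega) = -\LL^*(-\omega)$ and $\rev{F}^*(\omega) = F^*(-\omega)$ give the $p$-analogue of the relation recalled in Subsection~\ref{ssc:wLF}, namely $\rev{\square}_{\fm,p} \bb_\eta = -\square_{\fm,p}(-\bb_\eta)$. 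Setting $u := -\bb_\eta$ and $w := \ol{\bb}_\eta$, Lemma~\ref{lm:sharm} translates into
\[ \square_{\fm,p} u \ge 0 \quad \text{and} \quad \square_{\fm,p} w \le 0 \quad \text{weakly} \]
on a neighborhood $W_0$ of $\eta(\R)$, while \eqref{eq:b+b} reads $w \ge u$ on $W_0$ with equality on $\eta(\R)$.

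I would then apply the strong maximum principle. By Lemma~\ref{lm:zeta}\eqref{it:zeta3} applied to both $\eta$ and $\bar\eta$, the gradients $\nabla u$ and $\nabla w$ coincide with $\dot{\eta}$ along $\eta(\R)$, so they are future-directed unit timelike there. By continuity, on a sufficiently small tubular neighborhood of $\eta(\R)$ they remain in a compact subset of $\Omega$; in particular $F^*(-\dd u)$ and $F^*(\dd w)$ stay bounded away from zero, and the linearization of $\square_{\fm,p}$ at either function is uniformly elliptic with principal part $(1-p)\del_1^2 + \sum_{i=2}^n \del_i^2$ in orthonormal coordinates adapted to the gradient (compare Remark~\ref{rm:ellip}). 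The nonlinear comparison argument of \cite[Proposition~9]{quintet} then goes through verbatim and yields $w \equiv u$ on some neighborhood $W \subset W_0$ of $\eta(\R)$, i.e., $\bb_\eta + \ol{\bb}_\eta = 0$ on $W$. The one-sided inequalities of Lemma~\ref{lm:sharm} are thereby forced to be equalities, so $\square_{\fm,p}(-\bb_\eta) = \square_{\fm,p}\ol{\bb}_\eta = 0$ weakly on $W$.

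For the $C^{1,1}_{\loc}$-regularity I would rely not on elliptic regularity for $\square_{\fm,p}$ (which would only yield $C^{1,\alpha}$) but on a semiconcave/semiconvex dichotomy. Lemma~\ref{lm:concave} applied to $\eta$ gives, in the limit $t \to \infty$, the semiconcavity of $\bb_\eta$ on $W$; the same lemma applied to the reverse structure $(M,\rev{L},\fm)$ and the reverse ray $\bar\eta$ yields the semiconcavity of $\ol{\bb}_\eta$. The pointwise identity $\ol{\bb}_\eta = -\bb_\eta$ on $W$ then upgrades the latter to the semiconvexity of $\bb_\eta$, and a locally semiconcave and semiconvex function belongs to $C^{1,1}_{\loc}$.

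The main obstacle is the maximum principle step: $\square_{\fm,p}$ is a genuinely nonlinear operator, degenerate at zeros of the gradient, so the strong maximum principle has to be deployed with care. What makes it work here is that the analysis of Section~\ref{sc:busemann} confines $\nabla u$ and $\nabla w$ to a common compact subset of $\Omega$ along $\eta$, reducing the question to a uniformly elliptic comparison problem; the delicate point, which is the content of \cite[Proposition~9]{quintet}, is that this uniform ellipticity has to survive the passage from $u$ and $w$ separately to the difference $w - u$.
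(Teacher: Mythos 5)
Your proposal is correct and follows essentially the same route as the paper: both rely on the interpolation trick of \cite[Proposition~9]{quintet} that converts the difference of the two nonlinear $p$-d'Alembertian supersolution inequalities (for $-\bb_\eta$ and $\ol{\bb}_\eta$) into a \emph{linear} elliptic inequality for $\bb_\eta+\ol{\bb}_\eta$, then apply the strong maximum principle, and finally combine the equi-semiconcavity from Lemma~\ref{lm:concave} for both $\bb_\eta$ and $\ol{\bb}_\eta$ (via the reverse structure) with the identity $\bb_\eta=-\ol{\bb}_\eta$ to get $C^{1,1}_{\loc}$. You correctly identify that one cannot apply $\square_{\fm,p}$ directly to the sum and that the whole crux is passing to the difference while retaining uniform ellipticity; the paper spells out that passage by writing $h_t=(1-t)(-\bb_\eta)+t\,\ol{\bb}_\eta$, differentiating $F^*(\dd h_t)^{p-2}\dd\phi(\nabla h_t)$ in $t$, and exhibiting the resulting coefficients $a_{ij}$ as positive definite in Fermi coordinates along $\eta$, exactly the content you delegate to \cite[Proposition~9]{quintet}.
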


\begin{proof}
Set $u:=\bb_\eta +\ol{\bb}_\eta$ and recall from \eqref{eq:b+b} that $u \ge 0$.
Let us consider
\[ h_t:=-\bb_\eta +tu =(1-t)(-\bb_\eta) +t\ol{\bb}_\eta. \]
Note that $h_0=-\bb_\eta$, $h_1=\ol{\bb}_\eta$, and that $-h_t$ is a time function.
Thus, we observe from Lemma~\ref{lm:sharm} that $\square_{\fm,p}h_1 \le 0$ and $\square_{\fm,p}h_0 =-\rev{\square}_{\fm,p}\bb_\eta \ge 0$ in the weak sense on some neighborhood $W$ of $\eta(\R)$.

Now, we fix a local coordinate system $(x^i)_{i=1}^n$ on a (small) open set $U$ in $W$ and take a nonnegative test function $\phi \in C^\infty_c(U)$ of compact support.
Then, we have
\begin{align*}
0 &\ge \int_U \phi (\square_{\fm,p} h_1 -\square_{\fm,p}h_0) \,\dd\fm \\
&= -\int_U \dd\phi \bigl( F^*(\dd h_1)^{p-2} \cdot \nabla h_1 -F^*(\dd h_0)^{p-2} \cdot \nabla h_0 \bigr) \,\dd\fm \\
&= -\int_U \int_0^1 \frac{\dd}{\dd t} \Bigl[ F^*(\dd h_t)^{p-2} \cdot \dd\phi(\nabla h_t) \Bigr] \,\dd t \,\dd\fm.
\end{align*}
It follows from \eqref{eq:Leg}, \eqref{eq:grad} and \eqref{eq:Euler} that the right-hand side can be calculated as
\begin{align*}
&= -\int_U \int_0^1 \biggl\{ (2-p)F^*(\dd h_t)^{p-4} \dd u(\nabla h_t) \cdot \dd\phi(\nabla h_t) \\
&\qquad\qquad\qquad
+F^*(\dd h_t)^{p-2} \cdot \dd\phi\biggl( \sum_{i,j=1}^n g^*_{ij}(\dd h_t)\frac{\del u}{\del x^j} \frac{\del}{\del x^i} \biggr) \biggr\} \,\dd t \,\dd\fm \\
&= -\int_U \int_0^1 \sum_{i,j=1}^n \frac{\del\phi}{\del x^i} \frac{\del u}{\del x^j} F^*(\dd h_t)^{p-2} \\
&\qquad\qquad\qquad
\times \biggl\{ \frac{2-p}{F^*(\dd h_t)^2} \sum_{k,l=1}^n g^*_{ik}(\dd h_t) \frac{\del h_t}{\del x^k} g^*_{jl}(\dd h_t) \frac{\del h_t}{\del x^l} +g^*_{ij}(\dd h_t) \biggr\} \,\dd t \,\dd\fm.
\end{align*}
Therefore, by writing $\fm=\sigma \,\dd x^1 \cdots \dd x^n$ on $U$, $u$ is a supersolution to a linear second order differential operator, namely
\[ \sum_{i,j=1}^n \frac{\del}{\del x^i}\biggl[ a_{ij} \frac{\del u}{\del x^j} \biggr] \le 0 \]
in the weak sense on $U$ with
\[ a_{ij} :=\sigma \int_0^1 F^*(\dd h_t)^{p-2} \biggl\{ \frac{2-p}{F^*(\dd h_t)^2} \sum_{k,l=1}^n g^*_{ik}(\dd h_t) \frac{\del h_t}{\del x^k} g^*_{jl}(\dd h_t) \frac{\del h_t}{\del x^l} +g^*_{ij}(\dd h_t) \biggr\} \,\dd t. \]

In the Fermi(--Walker) coordinates along $\eta$ such that $(\del/\del x^i)_{i=1}^n$ is $g_{\nabla(-\bb_\eta)}$-orthonormal and $\del/\del x^1=\dot{\eta}$ on $\eta$, $(a_{ij})$ is the diagonal matrix $\sigma \cdot \mathrm{diag}(1-p,1,\ldots,1)$ on $\eta$, which is positive definite.
Hence, taking a smaller neighborhood $W$ of $\eta(\R)$ if necessary, $u$ is a supersolution to an elliptic equation on $W$.
Combining this with $u \ge 0$ and $u=0$ on $\eta$, we can apply the maximum principle, yielding $u=0$ on $W$.
Then, we deduce from $\bb_\eta =-\ol{\bb}_\eta$ and Lemmas~\ref{lm:concave}, \ref{lm:sharm} that $\bb_\eta$ is both semiconcave and semiconvex (hence $C^{1,1}_{\loc}$) and satisfies $\square_{\fm,p}(-\bb_\eta) =\square_{\fm,p}\ol{\bb}_\eta =0$ in the weak sense (see \cite[Proposition~9]{quintet} for more details).
\end{proof}

\subsection{Bochner-type identity and smoothness}\label{ssc:Bochner}

In order to obtain the smoothness of the Busemann function $\bb_\eta$, we generalize the Bochner identity to our setting.
Different from the strategy in  \cite{MS:22} and \cite{quintet} directly working on $H_p$ in \eqref{eq:H_p} (cf.\ related works \cite{Lee,Oham} on Hamiltonian systems), we shall closely follow the lines of thought in the Finsler case \cite{Obook,OSbw}.
In fact, we can apply the same calculation regardless of the signature of the metric; the only difference is that the quantity \eqref{eq:q-HS} corresponding to the Hilbert--Schmidt norm is not necessarily nonnegative in the Lorentzian setting.
Due to the close analogy with the positive definite setting, we only give an outline here and refer to \cite[\S 3.1]{OSbw} and \cite[\S 12.2]{Obook} for more details.

Let $U \subset M$ be an open set and take $h \in C^3(U)$ such that $-h$ is temporal.
For $y \in U$ and small $t>0$, define a map $\cT_t$ and a vector field $V_t$ by
\[ \cT_t(y) :=\exp_y\bigl( t\nabla h(y) \bigr), \qquad
V_t\bigl( \cT_t(y) \bigr) =\frac{\dd}{\dd t}\bigl[ \cT_t(y) \bigr]. \]
Fix $x \in U$ and put $\zeta(t):=\cT_t(x)$.
Since $\zeta$ is a timelike geodesic and $\dot{\zeta}(t)=V_t(\zeta(t))$, we have
\begin{equation}\label{eq:V_t}
\sum_{i=1}^n \frac{\del V_t^i}{\del t}\bigl( \zeta(t) \bigr) \frac{\del}{\del x^i}\Big|_{\zeta(t)}
 +D^{V_t}_{V_t} V_t \bigl( \zeta(t) \bigr) =0.
\end{equation}
Let $(e_i)_{i=1}^n$ be an orthonormal basis of $(T_xM,g_{\nabla h})$
such that $e_1=\nabla h(x)/F(\nabla h(x))$,
and consider vector fields along $\zeta$ given by
\[ E_i(t) :=(\dd \cT_t)_x (e_i) =\frac{\del}{\del s}\Bigl[ \cT_t\bigl( \exp_x(se_i) \bigr) \Bigr]_{s=0}. \]
By the latter expression, $E_i$ is a Jacobi field with $E_i(0)=e_i$.
Since $(E_i(t))_{i=1}^n$ is a basis of $T_{\zeta(t)}M$ (for small $t>0$), we can introduce an $n \times n$ matrix $B(t)=(b_{ij}(t))$ by $D^{\dot{\zeta}}_{\dot{\zeta}} E_i(t)=\sum_{j=1}^n b_{ij}(t) E_j(t)$.
Then, we have the following \emph{Riccati equation}:
\begin{equation}\label{eq:Riccati}
\bigl[ \trace B \bigr]'(t) +\trace\bigl[ B(t)^2 \bigr] +\Ric\bigl( \dot{\zeta}(t) \bigr) =0.
\end{equation}

Observe that $B(t)$ represents the covariant derivative of $V_t$ in the sense that
\[ D^{\dot{\zeta}(t)}_{E_i(t)} V_t =\sum_{j=1}^n b_{ij}(t)E_j(t). \]
In particular, since $V_0=\nabla h$, we have $b_{ij}(0) =g_{\nabla h}(\nabla^2_{e_i}h,e_j) g_{\nabla h}(e_j,e_j)$ and infer from the symmetry \eqref{eq:symm} that
\begin{equation}\label{eq:q-HS}
\trace\bigl[ B(0)^2 \bigr]
=\sum_{i,j=1}^n g_{\nabla h}(\nabla^2_{e_i} h,e_j)^2 g_{\nabla h}(e_i,e_i) g_{\nabla h}(e_j,e_j)
=: \HS_{\nabla h}(\nabla^2 h),
\end{equation}
which corresponds to the square of the Hilbert--Schmidt norm of $\nabla^2 h$ with respect to $g_{\nabla h}$ in the positive definite case, but is not necessarily nonnegative in the current setting.

Concerning the first term in \eqref{eq:Riccati}, we find
\[ \trace \bigl[ B(t) \bigr] =\div_\fm V_t\bigl( \zeta(t) \bigr) +(\psi_\fm \circ \dot{\zeta})'(t), \]
and then \eqref{eq:V_t} (together with the linearity of $\div_\fm$ from \eqref{eq:div}) implies
\[ \frac{\dd}{\dd t} \Bigl[ \div_\fm V_t\bigl( \zeta(t) \bigr) \Bigr]
=\dd(\div_\fm V_t)\bigl( \dot{\zeta}(t) \bigr) -\div_\fm (D^{V_t}_{V_t} V_t)\bigl( \zeta(t) \bigr). \]
Now, at $t=0$, we observe $\div_\fm V_0 =\square_\fm h$ and, since
\[ g_{\nabla h}\biggl( D^{\nabla h}_{\nabla h} \bigl[ \nabla h \bigr],\frac{\del}{\del x^i} \biggr)
=g_{\nabla h}\Bigl( D^{\nabla h}_{\del /\del x^i} \bigl[ \nabla h \bigr],\nabla h \Bigr)
=\frac{1}{2} \frac{\del}{\del x^i} \Bigl[ g_{\nabla h}(\nabla h,\nabla h) \Bigr], \]
it follows that
\[ D^{\nabla h}_{\nabla h} \bigl[ \nabla h \bigr]
= \sum_{i,j=1}^n g^{ij}(\nabla h) g_{\nabla h}\biggl( D^{\nabla h}_{\nabla h} \bigl[ \nabla h \bigr],\frac{\del}{\del x^i} \biggr) \frac{\del}{\del x^j}
= \nabla^{\nabla h} \biggl[ -\frac{F(\nabla h)^2}{2} \biggr], \]
where we define
\[ \nabla^{\nabla h} u :=\sum_{i,j=1}^n g^{ij}(\nabla h) \frac{\del u}{\del x^j} \frac{\del }{\del x^i} \]
(we do not impose any causality condition on $u$).
Plugging those into \eqref{eq:Riccati} at $t=0$, we arrive at the following.

\begin{proposition}[Bochner-type identity]\label{pr:Boch}
Let $(M,L,\fm)$ be a weighted Finsler spacetime.
For $h \in C^3(U)$ on an open set $U \subset M$ such that $-h$ is temporal, we have
\begin{equation}\label{eq:Boch}
\div_\fm \biggl( \nabla^{\nabla h} \biggl[ \frac{F(\nabla h)^2}{2} \biggr] \biggr)
+\dd(\square_\fm h)(\nabla h) +\Ric_\infty (\nabla h) +\HS_{\nabla h}(\nabla^2 h) =0
\end{equation}
pointwise on $U$.
\end{proposition}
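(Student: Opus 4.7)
The plan is to combine the ingredients assembled in the discussion preceding the statement by evaluating the Riccati equation \eqref{eq:Riccati} at $t=0$ and recognizing each term as a summand in \eqref{eq:Boch}. Since the base point $x \in U$ was arbitrary in the construction of $\zeta$, a pointwise identity at such $x$ yields the claim on all of $U$.

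First, I would note that two of the three terms in \eqref{eq:Riccati} are already in their desired form at $t=0$: by \eqref{eq:q-HS}, the quadratic term $\trace[B(0)^2]$ equals $\HS_{\nabla h}(\nabla^2 h)$, and the curvature term $\Ric(\dot{\zeta}(0))$ equals $\Ric(\nabla h)$ since $\dot{\zeta}(0) = V_0(x) = \nabla h(x)$. It then remains to compute $[\trace B]'(0)$. I would differentiate the identity
\[
\trace[B(t)] = \div_\fm V_t\bigl(\zeta(t)\bigr) + (\psi_\fm \circ \dot{\zeta})'(t)
\]
at $t=0$. The derivative of the second summand is $(\psi_\fm \circ \dot{\zeta})''(0)$, which combines with $\Ric(\nabla h)$ to produce exactly $\Ric_\infty(\nabla h)$ by Definition~\ref{df:wRic}. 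For the first summand, inserting $t=0$ into the chain-rule identity
\[
\frac{\dd}{\dd t}\Bigl[\div_\fm V_t\bigl(\zeta(t)\bigr)\Bigr] = \dd(\div_\fm V_t)\bigl(\dot{\zeta}(t)\bigr) - \div_\fm\bigl(D^{V_t}_{V_t} V_t\bigr)\bigl(\zeta(t)\bigr)
\]
(which follows from \eqref{eq:V_t} together with the linearity of $\div_\fm$), and using $V_0 = \nabla h$ with $\div_\fm V_0 = \square_\fm h$, gives
\[
\frac{\dd}{\dd t}\Bigl[\div_\fm V_t\bigl(\zeta(t)\bigr)\Bigr]\bigg|_{t=0}
= \dd(\square_\fm h)(\nabla h) - \div_\fm\bigl(D^{\nabla h}_{\nabla h}[\nabla h]\bigr)(x).
\]
The final manipulation is to substitute the identification $D^{\nabla h}_{\nabla h}[\nabla h] = \nabla^{\nabla h}[-F(\nabla h)^2/2]$ and use the linearity of $\div_\fm$ to flip the sign, turning the second summand into $+\div_\fm(\nabla^{\nabla h}[F(\nabla h)^2/2])$.

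Plugging all three contributions into \eqref{eq:Riccati} at $t=0$ yields \eqref{eq:Boch} at $x$, and arbitrariness of $x$ closes the argument. The hypothesis $h \in C^3$ is used precisely to ensure the flow $\cT_t$ and the vector fields $E_i$ are $C^2$ in $t$, so that $\trace[B(t)]$ is differentiable at $t=0$ and both sides of \eqref{eq:Boch} are continuous. There is no real analytic obstacle here; the main point of care is sign bookkeeping through the $\nabla^{\nabla h}$-gradient identity and the definition of $\Ric_\infty$. The substantive geometric content sits in the Riccati equation \eqref{eq:Riccati}, which is the standard Jacobi field computation already transported to the Finsler spacetime setting in the preceding paragraphs.
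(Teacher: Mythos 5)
Your proposal is correct and follows essentially the same route as the paper: evaluate the Riccati equation \eqref{eq:Riccati} at $t=0$, identify $\trace[B(0)^2]$ with $\HS_{\nabla h}(\nabla^2 h)$ via \eqref{eq:q-HS}, and unpack $[\trace B]'(0)$ using the trace formula $\trace[B(t)] = \div_\fm V_t(\zeta(t)) + (\psi_\fm\circ\dot\zeta)'(t)$ together with \eqref{eq:V_t}, the identification $\div_\fm V_0 = \square_\fm h$, and $D^{\nabla h}_{\nabla h}[\nabla h] = \nabla^{\nabla h}[-F(\nabla h)^2/2]$. The sign bookkeeping and the absorption of $(\psi_\fm\circ\dot\zeta)''(0)$ into $\Ric_\infty(\nabla h)$ are handled correctly, and this is exactly how the paper assembles the identity.
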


We remark that, if we replace $F(\nabla h)^2$ with $-g_{\nabla h}(\nabla h,\nabla h)$, then \eqref{eq:Boch} is exactly the same form as the Bochner identity in the positive definite case in \cite{Obook,OSbw}.

\begin{remark}[Comparison with Raychaudhuri equation]\label{rm:Raych}
The \emph{Raychaudhuri equation} can be shown by a similar argument (see, e.g., \cite{LMO1}),
however, in that we deal with an endomorphism on the $(n-1)$-dimensional subspace $N_{\zeta}(t) \subset T_{\zeta(t)}M$ which is $g_{\dot{\zeta}}$-orthogonal to $\dot{\zeta}$.
In the above discussion, the corresponding endomorphism $v \longmapsto D^{\dot{\zeta}(t)}_v V_t$ on $T_{\zeta(t)}M$ does not necessarily satisfy this condition.
A typical example where one can apply the Raychaudhuri equation is the time separation function from a point, and then the d'Alembertian comparison theorem follows (see \cite{LMO2}).
\end{remark}

Since \eqref{eq:Boch} is concerned with $\Ric_\infty$, for a weaker bound $\Ric_N \ge 0$ with $N \le 0$, we need a variant along the lines of  \cite[Lemma~3.1]{Wy}.

\begin{lemma}[$\Ric_0 \ge 0$ case]\label{lm:Wylie}
Let $(M,L,\fm)$ be a weighted Finsler spacetime with $\Ric_0 \ge 0$ in timelike directions.
Then, for $h \in C^3(U)$ as in Proposition~$\ref{pr:Boch}$, we have
\begin{equation}\label{eq:Wylie}
w\div_\fm \biggl( \nabla^{\nabla h} \biggl[ \frac{F(\nabla h)^2}{2} \biggr] \biggr)
+\dd(w\square_\fm h)(\nabla h) +w \frac{(\square_\fm h)^2}{n}
\le \frac{w}{2F(\nabla h)^2} \sum_{\alpha=2}^n \Bigl( \dd\bigl[ F(\nabla h)^2 \bigr](e_\alpha) \Bigr)^2
\end{equation}
at any $x \in U$, where $w(\zeta(t)):=\e^{2\psi_\fm(\dot{\zeta}(t))/n}$ for $\zeta(t):=\exp_x(t\nabla h(x))$ and $(e_i)_{i=1}^n$ is an orthonormal basis of $(T_xM,g_{\nabla h})$ such that $e_1=\nabla h(x)/F(\nabla h(x))$.
Moreover, in the case where $F(\nabla h)$ is constant, equality holds if and only if $\Ric_0(\nabla h(x))=0$ and $\nabla^2_v h=cv$ for some $c \in \R$ and all $v \in T_xM$.
\end{lemma}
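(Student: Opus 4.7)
The plan is to derive \eqref{eq:Wylie} by multiplying the Bochner-type identity \eqref{eq:Boch} by the weight $w = e^{2\psi_\fm/n}$, re-expressing the result in terms of $\Ric_0$, and then reducing the remaining pointwise inequality to a Cauchy--Schwarz estimate in a $g_{\nabla h}$-orthonormal frame. First, I would expand $d(w\square_\fm h)(\nabla h) = w \cdot d(\square_\fm h)(\nabla h) + (\square_\fm h) \cdot dw(\nabla h)$ and observe that $dw_x(\nabla h(x)) = (2w/n)\psi'$, with $\psi' := (\psi_\fm \circ \dot\zeta)'(0)$. Proposition~\ref{pr:Boch} then lets me rewrite the left-hand side of \eqref{eq:Wylie} as
\[ w\Bigl[-\Ric_\infty(\nabla h) - \HS_{\nabla h}(\nabla^2 h) + \tfrac{2}{n}(\square_\fm h)\psi' + \tfrac{1}{n}(\square_\fm h)^2\Bigr]. \]
Completing the square via $\square_\fm h = \square h - \psi'$ turns the last two bracketed terms into $(\square h)^2/n - (\psi')^2/n$, and combining with the identity $\Ric_0 = \Ric_\infty + (\psi')^2/n$ collapses the bracket to $-\Ric_0(\nabla h) - \HS_{\nabla h}(\nabla^2 h) + (\square h)^2/n$. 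Applying the hypothesis $\Ric_0 \ge 0$ then reduces \eqref{eq:Wylie} to the purely algebraic inequality
\[ \tfrac{1}{n}(\square h)^2 \le \HS_{\nabla h}(\nabla^2 h) + \tfrac{1}{2F(\nabla h)^2}\sum_{\alpha=2}^n \bigl(d[F(\nabla h)^2](e_\alpha)\bigr)^2. \]

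Next, I would unpack this in the orthonormal frame $(e_i)$ by setting $S_{ij} := g_{\nabla h}(\nabla^2_{e_i} h, e_j)$ (symmetric by \eqref{eq:symm}). A direct computation using \eqref{eq:g_X} gives $d[F(\nabla h)^2](e_\alpha) = -2g_{\nabla h}(\nabla^2_{e_\alpha} h, \nabla h) = -2F(\nabla h) S_{1\alpha}$, so the compensating term on the right equals $2\sum_{\alpha\ge 2} S_{1\alpha}^2$. Expanding $\HS_{\nabla h}(\nabla^2 h)$ with the Lorentzian signature $g_{\nabla h}(e_1,e_1) = -1$ yields
\[ \HS_{\nabla h}(\nabla^2 h) = S_{11}^2 - 2\sum_{\alpha\ge 2} S_{1\alpha}^2 + \sum_{\alpha,\beta\ge 2} S_{\alpha\beta}^2, \]
and the negative cross-terms cancel exactly with the compensating term. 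The remaining claim is $(\square h)^2 \le n\bigl(S_{11}^2 + \sum_{\alpha,\beta\ge 2} S_{\alpha\beta}^2\bigr)$; since $\square h = \trace[\nabla^2 h] = -S_{11} + \sum_{\alpha\ge 2} S_{\alpha\alpha}$, this follows by Cauchy--Schwarz applied to the $n$-tuple $(-S_{11}, S_{22}, \ldots, S_{nn})$, followed by the trivial bound $\sum_{\alpha\ge 2} S_{\alpha\alpha}^2 \le \sum_{\alpha,\beta\ge 2} S_{\alpha\beta}^2$.

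For the equality case under the additional assumption that $F(\nabla h)$ is constant, $d[F(\nabla h)^2] \equiv 0$ forces both $S_{11} = 0$ (from $d[F^2](e_1) = -2F(\nabla h)S_{11}$) and $S_{1\alpha} = 0$ for all $\alpha \ge 2$, so the right-hand side of \eqref{eq:Wylie} vanishes. Equality in both Cauchy--Schwarz steps then demands $-S_{11} = S_{22} = \cdots = S_{nn}$ and $S_{\alpha\beta} = 0$ for $\alpha \ne \beta$ with $\alpha, \beta \ge 2$; combined with $S_{11} = 0$, this forces all $S_{\alpha\alpha}$ to vanish as well, giving $\nabla^2 h \equiv 0$, which is precisely $\nabla^2_v h = cv$ with $c = 0$. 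Equality also forces $\Ric_0(\nabla h) = 0$ to avoid a strict inequality at that step. The converse is immediate: $\nabla^2_v h = cv$ combined with $F(\nabla h)$ constant forces $c = 0$ through $\nabla^2_{\nabla h} h = \nabla^{\nabla h}[-F(\nabla h)^2/2] = 0$, after which both sides of \eqref{eq:Wylie} vanish.

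The main technical hurdle is the indefiniteness of $\HS_{\nabla h}$ in Lorentzian signature: the Hilbert--Schmidt term contributes the negative cross-terms $-2\sum_{\alpha\ge 2} S_{1\alpha}^2$, which would otherwise destroy the positivity required for a Cauchy--Schwarz bound. The correction term $\tfrac{1}{2F(\nabla h)^2}\sum_{\alpha} (d[F(\nabla h)^2](e_\alpha))^2$ appearing on the right-hand side of \eqref{eq:Wylie} is precisely engineered to cancel these problematic cross-terms, after which the argument reduces to the classical Wylie-type trace Cauchy--Schwarz manipulation familiar from the positive definite case.
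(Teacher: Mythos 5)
Your proposal is correct and follows the same route as the paper's own proof: multiply the Bochner identity \eqref{eq:Boch} by $w$, complete the square to trade $\Ric_\infty+(\psi')^2/n$ for $\Ric_0$, and then reduce to a trace Cauchy--Schwarz estimate in a $g_{\nabla h}$-orthonormal frame, with the Lorentzian cross terms $-2\sum_\alpha S_{1\alpha}^2$ absorbed by the compensating term on the right of \eqref{eq:Wylie}. The only difference is cosmetic: you spell out the frame computation in terms of the explicit matrix $S_{ij}=g_{\nabla h}(\nabla^2_{e_i}h,e_j)$ and its trivial diagonal bound, where the paper does the same two estimates inline as $\HS_{\nabla h}(\nabla^2 h)\ge \sum_i S_{ii}^2-2\sum_\alpha S_{1\alpha}^2\ge (\square h)^2/n - \tfrac{1}{2F^2}\sum_\alpha(\dd[F^2](e_\alpha))^2$. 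In the equality discussion you additionally observe that $F(\nabla h)$ constant forces $S_{11}=0$ and hence $c=0$ already at this stage; the paper states the equality case only as $\nabla^2_v h=cv$ for \emph{some} $c$ and pins down $c=0$ later in Corollary~\ref{cr:smooth} using the $\Ric_N\ge 0$, $N<0$ hypothesis (which it needs anyway for the measure splitting). Both conclusions are consistent; yours is marginally sharper as a statement about the lemma itself. One small point: your closing remark that ``both sides of \eqref{eq:Wylie} vanish'' in the converse direction is true but deserves the one-line justification that the chain of estimates collapses to equalities, giving $\mathrm{LHS}=-w\Ric_0(\nabla h)=0$; the vanishing of the non-divergence terms $\dd(w\square_\fm h)(\nabla h)+w(\square_\fm h)^2/n$ is not immediate from $\nabla^2 h\equiv 0$ alone without this.
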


\begin{proof}
We first observe
\[ \dd(w\square_\fm h)(\nabla h)
=w \cdot \dd(\square_\fm h)(\nabla h) +2w\frac{(\psi_\fm \circ \dot{\zeta})'(0)}{n} \cdot \square_\fm h.\]
It follows from \eqref{eq:q-HS}, the Cauchy--Schwarz inequality and \eqref{eq:g_X} that, at $x$,
\begin{align*}
\HS_{\nabla h}(\nabla^2 h)
&\ge \sum_{i=1}^n g_{\nabla h}(\nabla^2_{e_i} h,e_i)^2
-2\sum_{\alpha=2}^n g_{\nabla h}(\nabla^2_{e_\alpha} h,e_1)^2 \\
&\ge \frac{1}{n} \Biggl( -g_{\nabla h}(\nabla^2_{e_1} h,e_1) +\sum_{i=2}^n g_{\nabla h}(\nabla^2_{e_i} h,e_i) \Biggr)^2
-\frac{2}{F(\nabla h)^2} \sum_{\alpha=2}^n g_{\nabla h}(\nabla^2_{e_\alpha} h,\nabla h)^2 \\
&=\frac{(\square h)^2}{n} -\frac{1}{2F(\nabla h)^2} \sum_{\alpha=2}^n \Bigl( \dd\bigl[ F(\nabla h)^2 \bigr](e_\alpha) \Bigr)^2.
\end{align*}
Moreover, we have
\[ (\square h)^2 =\bigl( \square_\fm h +(\psi_\fm \circ \dot{\zeta})'(0) \bigr)^2
= (\square_\fm h)^2 +2(\psi_\fm \circ \dot{\zeta})'(0) \square_\fm h +(\psi_\fm \circ \dot{\zeta})'(0)^2. \]
Comparing these with \eqref{eq:Boch}, we obtain
\begin{align*}
&\dd(w\square_\fm h)(\nabla h) +w \frac{(\square_\fm h)^2}{n} -\frac{w}{2F(\nabla h)^2} \sum_{\alpha=2}^n \Bigl( \dd\bigl[ F(\nabla h)^2 \bigr](e_\alpha) \Bigr)^2 \\
&\le w \cdot \dd(\square_\fm h)(\nabla h) +w\HS_{\nabla h}(\nabla^2 h) -w\frac{(\psi_\fm \circ \dot{\zeta})'(0)^2}{n} \\
&= -w\div_\fm \biggl( \nabla^{\nabla h} \biggl[ \frac{F(\nabla h)^2}{2} \biggr] \biggr) -w\Ric_\infty(\nabla h) -w\frac{(\psi_\fm \circ \dot{\zeta})'(0)^2}{n} \\
&= -w\div_\fm \biggl( \nabla^{\nabla h} \biggl[ \frac{F(\nabla h)^2}{2} \biggr] \biggr) -w\Ric_0(\nabla h).
\end{align*}
Since $\Ric_0 \ge 0$ by assumption, this completes the proof of \eqref{eq:Wylie}.

When $F(\nabla h)$ is constant, the right-hand side of \eqref{eq:Wylie} vanishes.
Then, by the above proof, equality holds if and only if $\Ric_0(\nabla h(x))=0$ and $g_{\nabla h}(\nabla^2_{e_i} h,e_j)=c \cdot g_{\nabla h}(e_i,e_j)$ for some $c \in \R$, which means $\nabla^2_v h=cv$.
\end{proof}

With \eqref{eq:Boch} and \eqref{eq:Wylie} in hand, we can show the smoothness of $\bb_\eta$.
We remark that the $N \in [0,1]$ case is excluded here.

\begin{corollary}[Smoothness]\label{cr:smooth}
Let $(M,L,\fm)$ be a timelike geodesically complete, weighted Finsler spacetime as in Theorem~$\ref{th:LFsplit}$.
Then, the Busemann function $\bb_\eta$ is $C^\infty$ and satisfies
\begin{equation}\label{eq:H=0}
\nabla^2(-\bb_\eta) \equiv 0
\end{equation}
on a neighborhood $W$ of $\eta(\R)$.
\end{corollary}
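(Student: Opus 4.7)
The plan has two stages: first upgrade $\bb_\eta$ from $C^{1,1}_{\loc}$ to $C^\infty$ by invoking the ellipticity of $\square_{\fm,p}$, and then apply the Bochner-type identity to deduce that the Hessian vanishes.

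For the first stage, Proposition~\ref{pr:max} already gives $\bb_\eta \in C^{1,1}_{\loc}$ on $W$ with $\square_{\fm,p}(-\bb_\eta) = 0$ in the weak sense. Since $\bb_\eta$ is $C^1$, Lemma~\ref{lm:zeta}\eqref{it:zeta3} applies pointwise to give $F^*(-\dd \bb_\eta) \equiv 1$ on $W$; in particular, $-\dd\bb_\eta$ stays in a compact subset of $\Omega^*$ bounded away from the boundary. Hence, writing $\square_{\fm,p}(-\bb_\eta) = 0$ in local coordinates via \eqref{eq:p-dA'} yields a quasilinear second-order equation whose coefficients are smooth functions of $\dd\bb_\eta$, and Remark~\ref{rm:ellip} ensures uniform ellipticity (with ellipticity constant bounded below by $1-p>0$). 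A standard De~Giorgi--Nash--Moser plus Schauder bootstrap then improves $\bb_\eta$ to $C^\infty$ on a (possibly shrunk) neighborhood of $\eta(\R)$; since $F^*(-\dd\bb_\eta)^{p-2} \equiv 1$, the equation moreover reduces to $\square_\fm(-\bb_\eta) \equiv 0$ pointwise.

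With $h:=-\bb_\eta$ now smooth, I apply the Bochner identity \eqref{eq:Boch}. The first term vanishes because $F(\nabla h) \equiv 1$, and the second because $\square_\fm h \equiv 0$, so
\[ \Ric_\infty(\nabla h) + \HS_{\nabla h}(\nabla^2 h) = 0. \]
In a $g_{\nabla h}$-orthonormal frame $(e_i)_{i=1}^n$ with $e_1 = \nabla h$, the Hessian components $A_{ij}:=g_{\nabla h}(\nabla^2_{e_i}h, e_j)$ involving the timelike index vanish: $A_{11}=0$ follows from the geodesic equation $D^{\nabla h}_{\nabla h}\nabla h = 0$ (provided by Lemma~\ref{lm:zeta}\eqref{it:zeta3}, \eqref{it:zeta4}), and $A_{\alpha 1}=0$ by differentiating $g_{\nabla h}(\nabla h, \nabla h) \equiv -1$ via \eqref{eq:g_X}; the symmetry \eqref{eq:symm} forces $A_{1\alpha}=0$ as well. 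Hence $\HS_{\nabla h}(\nabla^2 h) = \sum_{\alpha,\beta \ge 2} A_{\alpha\beta}^2 \ge 0$. For $N \in [n,\infty]$ the monotonicity yields $\Ric_\infty(\nabla h) \ge \Ric_N(\nabla h) \ge 0$, so both terms must vanish, giving $\nabla^2(-\bb_\eta) \equiv 0$.

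For the complementary range $N \in (-\infty, 0)$, the monotonicity in $N$ gives $\Ric_0 \ge \Ric_N \ge 0$, so one applies Lemma~\ref{lm:Wylie} in place of \eqref{eq:Boch}. Both sides of \eqref{eq:Wylie} vanish (the left because $F(\nabla h)$ and $\square_\fm h$ are constant, the right because $F(\nabla h)^2 \equiv 1$), placing us in the equality case, which gives $\nabla^2_v h = cv$ for some $c \in \R$ and all $v$; evaluating at $v = \nabla h$ and using $\nabla^2_{\nabla h}h = 0$ forces $c = 0$, whence $\nabla^2(-\bb_\eta) \equiv 0$ again. The main obstacle is the first stage: one must verify carefully that $\square_{\fm,p}$ around $\eta$ fits the standard quasilinear elliptic regularity framework. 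This is possible precisely because the normalization $F^*(-\dd\bb_\eta) \equiv 1$ keeps the principal symbol bounded away from degeneracy and the structure functions smooth; the Bochner step itself is then essentially algebraic, the only Lorentz--Finsler-specific subtlety being the sign analysis that makes $\HS_{\nabla h}(\nabla^2 h)$ nonnegative in this setting.
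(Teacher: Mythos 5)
Your proof is correct, but it takes a genuinely different route from the paper's in the regularity-upgrade step. The paper first treats the case $h \in C^3$, where the pointwise Bochner identity and the cancellation of the cross terms $g_{\nabla h}(\nabla^2_{e_1}h,e_\alpha)$ give $\HS_{\nabla h}(\nabla^2 h)=0$ directly; for the actual $C^{1,1}_{\loc}$ Busemann function it then approximates by smooth functions and uses the \emph{integrated} (weak) form of \eqref{eq:Boch}, obtaining $\nabla^2 h=0$ a.e.\ and hence everywhere (this is the argument deferred to \cite[Corollary~14]{quintet}, and it is exactly this approximation step that forces the exclusion of $N\in[0,1)$, cf.\ Remark~\ref{rm:Wylie}). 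You instead upgrade $\bb_\eta$ to $C^\infty$ first, by viewing $\square_{\fm,p}(-\bb_\eta)=0$ as a non-degenerate quasilinear divergence-form elliptic equation --- non-degenerate precisely because $F^*(-\dd\bb_\eta)\equiv 1$ keeps the covector away from the light cone and the zero section --- and invoking a De Giorgi--Nash--Moser/Schauder bootstrap. This buys you the ability to apply \eqref{eq:Boch} and \eqref{eq:Wylie} pointwise at the end, and indeed simplifies the $N<0$ case: you derive $c=0$ directly from the geodesic relation $\nabla^2_{\nabla h}h=0$ rather than via the paper's identity $(\psi_\fm\circ\dot\zeta)'(0)=0$ (which follows anyway once $\nabla^2 h=0$ and $\square_\fm h=0$ are known, so the downstream use of that fact in the proof of Theorem~\ref{th:LFsplit} remains available). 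The trade-off is real: your bootstrap still requires some care (differentiating the weak equation, checking that the structure coefficients $A^i(x,\dd f)=F^*(-\dd f)^{p-2}\sum_j g^*_{ij}(-\dd f)\,\del_j(-f)$ remain smooth and uniformly elliptic on the compact set swept out by $-\dd\bb_\eta$, and carrying out the $C^{1,1}\to C^{2,\alpha}\to C^{\infty}$ iteration), whereas the paper's approximation argument outsources its technicalities to \cite{quintet}. You flagged this obstacle explicitly, which is appropriate; the algebraic Bochner analysis in your second stage, including your correct observation that $A_{11}=A_{1\alpha}=A_{\alpha 1}=0$ (so that $\HS_{\nabla h}(\nabla^2 h)=\sum_{\alpha,\beta\ge 2}A_{\alpha\beta}^2\ge 0$), matches the paper.
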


\begin{proof}
We put $h:=-\bb_\eta$ for brevity.
Recall from Lemma~\ref{lm:zeta} and Proposition~\ref{pr:max} that $h$ is $C^{1,1}_{\loc}$, $F(\nabla h) \equiv 1$ on $W$, and $\square_{\fm,p}h =\square_\fm h =0$ almost everywhere on $W$.

First, suppose that $h$ is $C^3$.
When $N \in [n,\infty]$, we deduce from \eqref{eq:Boch} and the hypothesis $\Ric_N \ge 0$ that 
\[ \HS_{\nabla h}(\nabla^2 h)
=-\Ric_\infty (\nabla h)
\le -\Ric_N (\nabla h) \le 0. \]
Thanks to $F(\nabla h) \equiv 1$, in \eqref{eq:q-HS}, for $\alpha=2,\ldots,n$, we observe
\[ g_{\nabla h}(\nabla^2_{e_1} h,e_\alpha) =g_{\nabla h}(\nabla^2_{e_\alpha} h,e_1)
=-\frac{1}{2} \dd\bigl[ F(\nabla h)^2 \bigr](e_\alpha) =0. \]
This yields $g_{\nabla h}(\nabla^2_{e_i} h,e_j) =0$ for all $i,j$, and hence \eqref{eq:H=0} holds and $h$ is $C^\infty$.

If $N<0$, then $\Ric_0 \ge \Ric_N \ge 0$ and we can apply Lemma~\ref{lm:Wylie}.
Since equality holds in \eqref{eq:Wylie} with $F(\nabla h) \equiv 1$, we have $\Ric_0(\nabla h)=0$ and $\nabla^2_v h=cv$ for some $c \in \R$.
Now, combining $\Ric_0(\nabla h)=0$ with the hypothesis $\Ric_N \ge 0$, we find that
\begin{align*}
\Ric_N \bigl( \nabla h(x) \bigr)
&=\Ric_0 \bigl( \nabla h(x) \bigr) -\biggl( \frac{1}{n}+\frac{1}{N-n} \biggr) (\psi_\fm \circ \dot{\zeta})'(0)^2 \\
&=-\frac{N}{n(N-n)} (\psi_\fm \circ \dot{\zeta})'(0)^2
\end{align*}
is nonnegative, where $\zeta$ is the geodesic with $\dot{\zeta}(0)=\nabla h(x)$.
Since $N<0$, this implies $(\psi_\fm \circ \dot{\zeta})'(0)=0$, thereby
\[ \square h =\square_\fm h +(\psi_\fm \circ \dot{\zeta})'(0) =0. \]
Therefore, we obtain $c=0$ and the proof is completed.

For general $\bb_\eta \in C^{1,1}_{\loc}$, we approximate it with $C^\infty$-functions, and obtain \eqref{eq:H=0} almost everywhere by the above argument.
This yields that \eqref{eq:H=0} holds everywhere and $\bb_\eta$ is $C^\infty$.
We refer to \cite[Corollary~14]{quintet} for further details of the approximation procedure, via the integrated form of \eqref{eq:Boch}:
\[ \int_M \biggl\{ \dd\phi \biggl( \nabla^{\nabla h} \biggl[ \frac{F(\nabla h)^2}{2} \biggr] \biggr)
 +\square_\fm h \cdot \div_\fm(\phi \nabla h) \biggr\} \,\dd\fm
 =\int_M \phi \bigl\{ \Ric_\infty (\nabla h) +\HS_{\nabla h}(\nabla^2 h) \bigr\} \,\dd\fm \]
for (nonnegative) test functions $\phi \in C^{\infty}_c(M)$ of compact support. To sketch the argument, one may approximate $h$ with smooth $h_\varepsilon$ which satisfies $F(\nabla h_\varepsilon) \geq 1- \varepsilon$ and $h_\varepsilon \to h$ in $W^{2,2}_{\mathrm{loc}}(W)$, as well as $\nabla^2 h_\varepsilon \to \nabla^2 h$ (and hence $\square_{\mathfrak m} h_\varepsilon \to \square_{\mathfrak m} h$) $\mathfrak m$-a.e. Using approximate versions of the arguments given above for $h$, now for $h_\varepsilon$, in the integrated Bochner-type formula above, one may thereupon pass to the limit to ultimately obtain $\nabla^2h = 0$ $\mathfrak m$-a.e., which, in coordinates, is easily seen to imply that $h$ is smooth. 
\end{proof}

\begin{remark}[{$N \in [0,1]$} case] \label{rm:Wylie}
Wylie's \cite[Lemma~3.1]{Wy} is in fact more general and concerned with the situation that $\nabla^2 h$ has at most $k$ non-zero eigenvalues under the assumption $\Ric_{n-k} \ge 0$.
Applying this with $k=n-1$ for a Busemann function, whose second order derivative vanishes along asymptotes, we could extend the splitting theorem to $N<1$ (cf. \cite[Corollary~1.3]{Wy},  \cite[Theorem~1.5(i)]{WW}), and we can even obtain a warped product splitting for $N=1$ (\cite[Theorem~1.2]{Wy}, \cite[Theorem~1.5(ii)]{WW}).
In the above proof, however, $C^\infty$-functions $h_{\varepsilon}$ approximating $\bb_\eta$ may have $n$ non-zero eigenvalues; this is the reason why we restricted ourselves to $N<0$.
\end{remark}

\section{Splitting theorems}\label{sc:split}

We are in a position to complete the proofs of the splitting theorems.
We first consider the timelike geodesically complete (TGC) case, and the globally hyperbolic (GH) case is deferred to Subsection~\ref{ssc:ghyp}.

\subsection{Diffeomorphic splitting}\label{ssc:prf1}

\begin{proof}[Proof of Theorem~$\ref{th:LFsplit}$ $($TGC case$)$]
We deduce from Lemmas~\ref{lm:zeta}, \ref{lm:line} together with Corollary~\ref{cr:smooth} that, for every point $x \in W$, the unique straight line bi-asymptotic to $\eta$ passing through $x$ is given by the integral curve of $\nabla(-\bb_\eta)$.
In other words, every integral curve of $\nabla(-\bb_\eta)$ is a straight line bi-asymptotic to $\eta$.
This enables us to apply standard arguments in splitting theorems in Lorentzian manifolds (one can alternatively apply Wu's de Rham-type decomposition theorem \cite{Wu}).
Precisely, since $\nabla^2(-\bb_\eta)=0$ on a neighborhood $W$ of $\eta(\R)$, also the $g_{\nabla(-\bb_\eta)}$-Hessian of $-\bb_\eta$ vanishes by \cite[Proposition~3.2]{LMO1}, thus we deduce that $\nabla(-\bb_\eta)$ is a parallel (and thus a complete timelike Killing) vector field for $g_{\nabla(-\bb_\eta)}$.
Hence, taking into account that $\nabla(-\bb_\eta)$ is also the gradient of $-\bb_\eta$ with respect to $g_{\nabla(-\bb_\eta)}$, we set  
$\Sigma':=\bb_\eta^{-1}(0) \cap W$ and 
\begin{equation}\label{eq:Theta}
\Theta\colon  \R \times \Sigma'  \lra  M,\quad \quad \Theta(t,x):=\zeta_x(t),
\end{equation}
where  $\zeta_x$ is the integral curve of $\nabla(-\bb_\eta)$  with $\zeta_x(0)=x$.  The map $\Theta$ is then an isometry from $(\R \times \Sigma',-\dd t^2 +g_{\nabla(-\bb_\eta)}|_{T\Sigma'})$  onto its image in $(M,g_{\nabla(-\bb_\eta)})$. 
Moreover, we deduce from $(\psi_\fm \circ \dot{\zeta}_x)'=0$ shown in the proof of Corollary~\ref{cr:smooth} that the measure $\fm$ is also decomposed into the product $\dd t \times \fn$.
Finally, we can show the global splitting by the standard open-and-closed argument (see, e.g., \cite[Theorem~7.3]{LMO3}).
This completes the proof of Theorem~\ref{th:LFsplit}.
\end{proof}

We remark that $\Sigma$ is given as the level surface $\bb_\eta^{-1}(0)$ equipped with the Riemannian metric $g_{\nabla(-\bb_\eta)}|_{T\Sigma}$.
As for $L|_{T\Sigma}$, one can say the following.

\begin{corollary}[Causal character of $\Sigma$]\label{cr:Sigma}
In the situation of Theorem~$\ref{th:LFsplit}$, the level surface $\Sigma=\bb_{\eta}^{-1}(0)$ is transversal to future-directed causal vectors in $TM$.
Moreover, if $L$ is reversible and $\dim M \ge 3$, then $L(v)>0$ for any $v \in T\Sigma \setminus \{0\}$.
\end{corollary}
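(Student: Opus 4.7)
The plan is to exploit the fact, already established in Corollary~\ref{cr:smooth} and in the proof of Theorem~\ref{th:LFsplit}, that $\bb_\eta$ is smooth on a neighborhood $W$ of $\eta(\R)$ with $\nabla(-\bb_\eta) \in \Omega$, so that in particular $-\dd\bb_\eta(x) \in \Omega^*_x$ for every $x \in W$. I would then globalize this using the splitting structure: since $\Sigma=\bb_\eta^{-1}(0)$ and the integral curves of $\nabla(-\bb_\eta)$ are isometries after the identification $\Theta$ in \eqref{eq:Theta}, it suffices to check the claim at points of $\Sigma \cap W$.

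For the first statement, I would argue as follows. By definition of the polar cone $\Omega^*_x$, the inclusion $-\dd\bb_\eta(x) \in \Omega^*_x$ means $-\dd\bb_\eta(v)<0$, equivalently $\dd\bb_\eta(v)>0$, for every $v \in \overline{\Omega}_x \setminus \{0\}$. Since $T_x\Sigma = \ker \dd\bb_\eta(x)$ (as $\Sigma$ is the level surface $\bb_\eta^{-1}(0)$ and $\dd\bb_\eta(x) \neq 0$), no future-directed causal vector can lie in $T_x\Sigma$, establishing transversality.

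For the second statement, I would suppose towards contradiction that there exists $v \in T_x\Sigma \setminus \{0\}$ with $L(v) \le 0$, so that $v$ is a nonzero causal vector in $\overline{\Omega'}\!_x$. Under the hypothesis that $L$ is reversible and $\dim M \ge 3$, Remark~\ref{rm:Omega} tells us that $\Omega'_x$ has exactly two connected components, which by reversibility must be $\Omega_x$ and $-\Omega_x$. Hence either $v$ or $-v$ belongs to $\overline{\Omega}_x \setminus \{0\}$. Applying the inequality from the previous paragraph to whichever of $\pm v$ is future-directed yields $\dd\bb_\eta(\pm v)>0$, so $\dd\bb_\eta(v) \neq 0$, contradicting $v \in T_x\Sigma$. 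This forces $L(v)>0$ for every nonzero $v \in T_x\Sigma$.

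The argument is essentially a direct consequence of the Legendre duality together with the smoothness of $\bb_\eta$ provided by Corollary~\ref{cr:smooth}; I do not anticipate any genuine obstacle. The only subtle point is the invocation of the two-component structure of $\Omega'_x$, which is exactly where the assumptions of reversibility and $\dim M \ge 3$ are used; without them one cannot rule out the existence of nonzero causal vectors sitting in a connected component of $\Omega'_x$ distinct from $\pm \Omega_x$, and thus cannot conclude that vectors tangent to $\Sigma$ are strictly spacelike.
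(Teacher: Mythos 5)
Your proof is correct and follows the same overall structure as the paper's. The only difference is in the first part: you invoke the definition of the polar cone $\Omega^*_x$ directly (together with the fact that $-\dd\bb_\eta(x)$ lies in the \emph{open} cone $\Omega^*_x$, which follows from $F(\nabla(-\bb_\eta))=1$ and the bijectivity of $\LL^*$), whereas the paper uses the reverse Cauchy--Schwarz inequality \eqref{eq:rCS} together with its equality characterization to reach the same contradiction. Your route is slightly more elementary and equally valid; the second part, via Remark~\ref{rm:Omega} and the observation that $\pm v\in T_x\Sigma$, coincides with the paper's argument.
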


\begin{proof}
If $v \in T_x\Sigma \setminus \{0\}$ is future-directed causal, then we deduce from the reverse Cauchy--Schwarz inequality \eqref{eq:rCS} that
\[ 0=\dd\bb_\eta(v) \ge \sqrt{4L^*\bigl( -\dd\bb_\eta(x)\bigr) L(v)} \ge 0. \]
Recalling the equality condition in \eqref{eq:rCS}, we find that $L(\nabla(-\bb_\eta)(x))=L(v)=0$, which contradicts $F(\nabla(-\bb_\eta))=1$.

In the case where $L$ is reversible and $\dim M \ge 3$, the set $\Omega'_x \subset T_xM$ of timelike vectors has exactly two connected components, corresponding to the future and past directions (recall Remark~\ref{rm:Omega}).\ In this case, since $-v \in T_x\Sigma$ is not future-directed causal as well, every $v \in T_x\Sigma \setminus \{0\}$ satisfies $L(v)>0$.
\end{proof}

We remark that, even when $L>0$ on $T_x\Sigma \setminus \{0\}$, we do not know if $L|_{T_x\Sigma}$ is strongly convex (in the sense that the Hessian of $L|_{T_x\Sigma \setminus \{0\}}$ is positive definite).

\subsection{Isometric translations in Berwald spacetimes}\label{ssc:prf2}

Next, we prove Theorem~\ref{th:Berwald} concerning Berwald spacetimes.
The proof is common to the TGC and GH cases (once the splitting as in Theorem~\ref{th:LFsplit} is established).
Since the covariant derivative is independent of a reference vector by the very definition of Berwald spacetimes (recall Definition~\ref{df:Ber}), we may omit the reference vector in the following.

\begin{lemma}[Busemann function is affine]\label{lm:affine}
Let $(M,L,\fm)$ be a weighted Berwald spacetime as in Theorem~$\ref{th:Berwald}$.
Then, for any geodesic $\xi\colon [0,1] \lra M$, we have $(\bb_\eta \circ \xi)'' \equiv 0$.
In particular, for each $t \in \R$, $\bb_{\eta}^{-1}(t)$ is a totally geodesic hypersurface with respect to $L$.
\end{lemma}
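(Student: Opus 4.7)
The plan is to combine two ingredients already prepared in the paper: the vanishing of the spacetime Hessian $\nabla^2(-\bb_\eta)$ furnished by Corollary~\ref{cr:smooth} (together with the global splitting of Theorem~\ref{th:LFsplit}), and the Berwald identity recorded in Remark~\ref{rm:Hess} that equates the second derivative of a function along a geodesic with its Finsler Hessian evaluated on the tangent vector.

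First I would promote $\nabla^2(-\bb_\eta) \equiv 0$ from the neighborhood $W$ of $\eta(\R)$ provided by Corollary~\ref{cr:smooth} to all of $M$. The diffeomorphism $\Theta\colon \R \times \Sigma \to M$ built in the proof of Theorem~\ref{th:LFsplit} identifies $\nabla(-\bb_\eta)$ with $\partial/\partial t$ and globalizes the parallelism of $\nabla(-\bb_\eta)$ with respect to $g_{\nabla(-\bb_\eta)}$ (compare \cite[Proposition~3.2]{LMO1}). In the Berwald case, the Chern-connection coefficients $\Gamma^i_{jk}$ are fiber-wise constant on each slit tangent space, so the Lorentzian parallelism of $\nabla(-\bb_\eta)$ is equivalent to the vanishing of the Finsler Hessian $\nabla^2(-\bb_\eta)$ pointwise on $M$.

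Next, for any geodesic $\xi\colon [0,1] \to M$ of $(M,L)$ and any $s$, set $v := \dot\xi(s)$ and apply the Berwald identity of Remark~\ref{rm:Hess} to the geodesic $\xi$ around the parameter $s$:
\[
(-\bb_\eta \circ \xi)''(s) = g_{\nabla(-\bb_\eta)(\xi(s))}\bigl(\nabla^2(-\bb_\eta)(v), v\bigr) = 0,
\]
the last equality by the previous step. Since $s$ is arbitrary, $(\bb_\eta \circ \xi)'' \equiv 0$, which is the first assertion.

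For the total geodesicness claim, any geodesic $\xi$ with $\xi(0) \in \bb_\eta^{-1}(t)$ and $\dot\xi(0) \in \ker(\dd\bb_\eta|_{\xi(0)}) = T_{\xi(0)}\bb_\eta^{-1}(t)$ satisfies $(\bb_\eta \circ \xi)(0) = t$ and $(\bb_\eta \circ \xi)'(0) = 0$; combined with $(\bb_\eta \circ \xi)'' \equiv 0$ this forces $\bb_\eta \circ \xi \equiv t$, so $\xi$ stays on $\bb_\eta^{-1}(t)$. The main point requiring care is the first step --- making fully explicit why the diffeomorphic splitting globalizes the Hessian vanishing in the Berwald setting --- since once that is in hand, Remark~\ref{rm:Hess} delivers the remaining assertions in a single line.
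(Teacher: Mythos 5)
Your proposal is correct and follows essentially the same approach as the paper, just packaged through Remark~\ref{rm:Hess} rather than via the explicit computation. The paper's proof unfolds the identity directly: starting from $(\bb_\eta\circ\xi)' = -g_{\nabla(-\bb_\eta)}(\nabla(-\bb_\eta),\dot\xi)$ it differentiates once more via \eqref{eq:g_X}, obtaining two terms, one killed by $\nabla^2(-\bb_\eta)\equiv 0$ and the other --- namely $g_{\nabla(-\bb_\eta)}(\nabla(-\bb_\eta),D^{\nabla(-\bb_\eta)}_{\dot\xi}\dot\xi)$ --- killed only by the Berwald condition, since the geodesic equation reads $D^{\dot\xi}_{\dot\xi}\dot\xi=0$ and the reference vector matters outside the Berwald class. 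Your invocation of Remark~\ref{rm:Hess} captures exactly this mechanism, and your explicit treatment of the globalization of $\nabla^2(-\bb_\eta)\equiv 0$ (via the splitting diffeomorphism $\Theta$) and of the total-geodesicness corollary are slightly more detailed than what the paper records, which leaves both implicit.
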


\begin{proof}
This is straightforward from \eqref{eq:g_X} and \eqref{eq:H=0}:
\begin{align*}
(\bb_\eta \circ \xi)''
&= \bigl[ -g_{\nabla(-\bb_\eta)} \bigl( \nabla(-\bb_\eta)(\xi),\dot{\xi} \bigr) \bigl]' \\
&= -g_{\nabla(-\bb_\eta)}\bigl( D_{\dot{\xi}}\bigl (\nabla(-\bb_\eta) \bigr),\dot{\xi} \bigr) -g_{\nabla(-\bb_\eta)}\bigl( \nabla(-\bb_\eta)(\xi),D_{\dot{\xi}} \dot{\xi} \bigr) =0.
\end{align*}
Precisely, the first term vanishes in general thanks to \eqref{eq:H=0}, while the second term vanishes only in the Berwald case (since $D^{\nabla(-\bb_\eta)}_{\dot{\xi}} \dot{\xi} \neq 0$ in general).
\end{proof}

To show Theorem~\ref{th:Berwald}, we can follow the lines of  \cite[Proposition~5.2]{Osplit}.

\begin{proof}[Proof of Theorem~$\ref{th:Berwald}$]
\eqref{it:isom}
Take $v \in T_xM \setminus \{0\}$ and put $V(t):=\dd\Phi_t(v)$.
It follows from Theorem~\ref{th:LFsplit} that $V$ is a parallel vector field with respect to $g_{\nabla(-\bb_\eta)}$ along the straight line $\zeta(t):=\Phi_t(x)$.
Hence, we have
\[ \frac{\dd}{\dd t}L(V)
=\frac{1}{2} \frac{\dd}{\dd t} \bigl[ g_V(V,V) \bigr]
=g_V(D_{\dot{\zeta}} V,V) \equiv 0, \]
thereby $\Phi_t$ is isometric.
More precisely, we observe from \eqref{eq:g_X} that
\[ \frac{\dd}{\dd t} \bigl[ g_V(V,V) \bigr]
=2g_V(D^V_{\dot{\zeta}}V,V), \]
and the Berwald condition and  \cite[Proposition~3.2]{LMO1} imply $D^V_{\dot{\zeta}} V =D^{\dot{\zeta}}_{\dot{\zeta}}V =D^{g_{\nabla(-\bb_\eta)}}_{\dot{\zeta}} V \equiv 0$, where $D^{g_{\nabla(-\bb_\eta)}}$ denotes the covariant derivative with respect to $g_{\nabla(-\bb_\eta)}$.

\eqref{it:geod}
We shall split the geodesic equation $D_{\dot{\xi}} \dot{\xi} =0$ on $M$ into those on $\R$ and $\Sigma$.
Given a local coordinate system $(z^\alpha)_{\alpha=1}^{n-1}$ on an open set $U \subset \Sigma$, we consider a coordinate system $(x^i)_{i=1}^n$ on $\R \times U \subset M$ given by $x^1=\bb_\eta$ and $x^i=z^{i-1}$ for $i \ge 2$.
Then, we have
\[ \frac{\del g_{ij}}{\del x^1}\bigl( \nabla(-\bb_\eta) \bigr) =0
\quad \text{for}\ i,j=1,\ldots,n \]
by Theorem~\ref{th:LFsplit}, and
\[ \frac{\del g_{1j}}{\del x^i}\bigl( \nabla(-\bb_\eta) \bigr) =0
\quad \text{for}\ i=1,\ldots,n \]
by $F^*(-\dd\bb_\eta)=1$ for $j=1$, and by $g_{\nabla(-\bb_\eta)}(\nabla(-\bb_\eta),T\Sigma)=0$ for $j \ge 2$.
Therefore, we find $\gamma^i_{jk}(\nabla(-\bb_\eta))=0$ unless $i,j,k \ge 2$.

Moreover, by the expression \eqref{eq:N}:
\[ N^i_j(v) =\sum_{k=1}^n \gamma^i_{jk}(v) v^k -\frac{1}{2} \sum_{k,l,a,b=1}^n g^{ia}(v) \frac{\del g_{ab}}{\del v^j}(v) \gamma^b_{kl}(v) v^k v^l, \]
we deduce that $N^i_j(\nabla(-\bb_\eta))=0$ for all $i,j=1,\ldots,n$.
This implies that, by recalling the definition \eqref{eq:Gamma} of $\Gamma^i_{jk}$, $\Gamma^i_{jk}(\nabla(-\bb_\eta))=0$ holds unless $i,j,k \ge 2$.
Therefore, the geodesic equation in $M$ is written as
\[ D_{\dot{\xi}} \dot{\xi}
=\ddot{\xi}^1 \frac{\del}{\del x^1}\Big|_\xi
+\sum_{i=2}^n \biggl\{ \ddot{\xi}^i +\sum_{j,k=2}^n \Gamma^i_{jk}(\xi) \dot{\xi}^j \dot{\xi}^k \biggr\} \frac{\del}{\del x^i}\Big|_\xi =0. \]
This completes the proof.
\end{proof}

\begin{remark}[When $L|_{T\Sigma\setminus \{0\}}$ is positive]\label{rm:Sigma}
Although $\Sigma=\bb_\eta^{-1}(0)$ may not be spacelike in $L$ in general, we notice that, by the fact that parallel transports preserve $L$ and the decomposition of $\Gamma^i_{jk}$ in the proof of Theorem~\ref{th:Berwald}\eqref{it:geod}, if there exists some $x \in \bb_\eta^{-1}(0)$ such that $T_x\Sigma$ is spacelike (i.e., $L(v)>0$ for all $v\in T_x \Sigma\setminus\{0\}$) and $g_v|_{T_x\Sigma}$ is positive semi-definite for all $v \in T_x\Sigma \setminus \{0\}$, then $(\Sigma,L|_{T\Sigma})$ is a (non-strongly convex) Finsler manifold of Berwald type.
In this case, one can also give a Lorentzian metrization of $(M,L)$ (recall Remark~\ref{rm:met}).
In fact, by \cite[Remark(A), p.~2157]{MatTro}, which extends Szab\'o's metrization result of Berwald spaces \cite{Sz} to  non-strongly convex (and also non-smooth) Berwald metrics,
we obtain a Riemannian metric $h$ on $\Sigma$ such that the Levi-Civita connection of $h$ coincides with the Chern connection of $L|_{T\Sigma}$.
Then, the Levi-Civita connection of the Lorentzian metric $-\dd t^2 +h$ coincides with the Chern connection of $L$.
We also remark that, by \cite[Proposition~2, Lemma~2]{Min-cone}, if $n\geq 3$ and $L|_{T\Sigma\setminus \{0\}}>0$ , then, for all $x\in M$, $\Omega'_x$ has either one or two connected components.
\end{remark}

\subsection{Globally hyperbolic case}\label{ssc:ghyp}

We conclude with the proof of the globally hyperbolic case.

\begin{definition}[Global hyperbolicity]\label{df:gh}
We say that a Finsler spacetime $(M,L)$ is \emph{globally hyperbolic} if it is \emph{causal} (there is no closed future-directed causal curve) and all causal diamonds $J^+(x) \cap J^-(y)$, $x \le y$, are compact.
\end{definition}

We remark that, since the cone structure $\overline{\Omega}$ is proper in the sense of  \cite[Definition~2.3]{Min-causality}, according to \cite[Corollary~2.4]{Min-causality}, the above definition is equivalent to requiring the strong causality and the compactness of causal diamonds (compare also with \cite[Appendix~B]{CCGP}).

\begin{proof}[Proof of Theorem~$\ref{th:LFsplit}$ $($GH case$)$]
Let $(M,L,\fm)$ be a globally hyperbolic weighted Berwald spacetime satisfying the hypotheses in Theorem~\ref{th:LFsplit}.
In this case, $\tau$ is finite and continuous and, for any $x,y \in M$ with $x<y$, there is a maximizing geodesic from $x$ to $y$ (see  \cite[Propositions~6.8, 6.9]{Min-Ray}).

In the globally hyperbolic case, an asymptote $\zeta\colon[0,b) \lra M$ constructed as in Definition~\ref{df:coray} may not be future complete, namely $\zeta$ may be future inextendible with $b<\infty$.
Nonetheless, concerning the behavior of Busemann functions, Lemma~\ref{lm:zeta} holds for $t \in (0,b)$, and it is shown along the lines of  \cite[Lemma~3.3]{Eschenburg:1988} (see also \cite[Theorem~2]{quintet}) that $\bb_\eta$ is locally Lipschitz on a neighborhood of $\eta(\R)$ (as in Theorem~\ref{th:GH3.7}).

Furthermore, following the lines of  \cite[\S 6]{quintet}, we can apply the calculations as in Section~\ref{sc:harm}.
(In this case, due to the possibility of $b<\infty$ as above, we need the timelike $\ez$-completeness as in Definition~\ref{df:cmplt} rather than \eqref{eq:cmplt}.)
Eventually, on a neighborhood $W$ of $\eta(\R)$, we have $\bb_\eta+\ol{\bb}_\eta \equiv 0$, $\bb_\eta \in C^{\infty}(W)$, and $\nabla^2(-\bb_\eta) \equiv 0$.
Therefore, we obtain a local splitting of $W$ as in Theorems~\ref{th:LFsplit}, \ref{th:Berwald}.

Now, the difficulty in the globally hyperbolic case is that $W$ may not be represented simply as the product $\R \times (\bb_\eta^{-1}(0) \cap W)$.
To show the completeness of asymptotes, we follow the argument in \cite{Galloway:89, quintet}; at this step we need the Berwald condition.
As in \eqref{eq:Theta}, we define $\Theta(t,x):=\zeta_x(t)$ for $x \in \bb_\eta^{-1}(0) \cap W$ and $t \in (a_x,b_x)$, namely $\zeta_x\colon (a_x,b_x) \lra M$ is the future and past inextendible timelike asymptote of $\eta$ with $a_x<0<b_x$.
We shall show that $a_x=-\infty$ (i.e., $\zeta_x$ is past complete).

Let $\Sigma':=\exp_{\eta(0)}(B)$ for a small ball $B \subset T_{\eta(0)}M \cap (\dd\bb_\eta)^{-1}(0)$.
Precisely, $B$ is an open ball with respect to the restriction of $g_{\nabla(-\bb_\eta)}$ to $(\dd\bb_\eta)^{-1}(0)$, which is a Riemannian metric, and the exponential map is for $L$.
By the decomposition of geodesics as in Theorem~\ref{th:Berwald}\eqref{it:geod},
$\Sigma'$ is included in $\bb_\eta^{-1}(0)$.
We may assume that, by taking $B$ smaller if necessary, $\dot{\eta}(0)+v \in \Omega_{\eta(0)}$ for any $v \in B$.
Fix $v \in B$, put $\sigma(s):=\exp_{\eta(0)}(sv)$ for $s \in [0,1]$, and consider $\zeta_{\sigma(s)}(t)=\Theta(t,\sigma(s))$.
To show $a_{\sigma(s)}=-\infty$ for all $s \in [0,1]$, we claim that, for any fixed $T>0$,
\begin{equation}\label{eq:a_s}
a_{\sigma(s)} <-T+s
\end{equation}
holds.
To this end, denote by $I_T$ the set of $R \in [0,1]$ such that \eqref{eq:a_s} holds for all $s \in [0,R]$.
Then, clearly $0 \in I_T$ since $a_{\eta(0)}=-\infty$, and $I_T$ is open in $[0,1]$.

We prove that $R:=\sup I_T \in I_T$.
If not, then we have $a_{\sigma(s)} <-T+s<-T+R \le a_{\sigma(R)}$ for all $s \in [0,R)$.
Hence, $\zeta_{\sigma(s)}(t)$ is well-defined for all $s \in [0,R)$ and $t \in (a_{\sigma(R)},0]$.
Now, we consider a timelike geodesic
\[ \xi(s):=\exp_{\eta(-T)}\Bigl( s \bigl( \dot{\eta}(-T)+v \bigr) \Bigr) =\Theta\bigl( -T+s,\sigma(s) \bigr), \]
where $v \in T_{\eta(-T)}M$ is identified with $v \in B$ above and $\dot{\eta}(-T)+v \in \Omega_{\eta(-T)}$ by the isometry of the translation as in Theorem~\ref{th:Berwald}\eqref{it:isom}, and equality is seen from Theorem~\ref{th:Berwald}\eqref{it:geod}.
By construction, for all $s \in [0,R)$ and $t \in (a_{\sigma(R)},0]$, we find
\[ \eta(-T) \ll \xi(s) =\Theta\bigl( -T+s,\sigma(s) \bigr) \ll \Theta\bigl( a_{\sigma(R)},\sigma(s) \bigr) \ll \Theta\bigl( t,\sigma(s) \bigr). \]
Letting $s \to R$, we deduce from the closedness of causal diamonds (by the global hyperbolicity) that $\Theta(t,\sigma(R)) \in J^+(\eta(-T)) \cap J^-(\sigma(R))$ for all $t \in (a_{\sigma(R)},0]$.
This, however, contradicts the non-total imprisonment, which follows from the global hyperbolicity (see, e.g.,  \cite[Definition~2.20]{Min-causality}).
Therefore, we obtain $R \in I_T$ and $I_T=[0,1]$, which implies $a_{\sigma(s)}=-\infty$ for all $s \in [0,1]$ since $T>0$ was arbitrary in \eqref{eq:a_s}.
This completes the proof of the past completeness of asymptotes, and the future completeness can be shown in the same way as the past completeness for the reverse structure $\rev{L}$.

This concludes the proof of the fact that the local splitting neighborhood $W$ can indeed be represented as a cylindrical product of the form $\R \times (\bb_{\eta}^{-1}(0) \cap W)$.
In the globalization of the local splitting, one needs to extend the splitting of $W$ to its boundary, where one must ensure that also the boundary asymptote is complete in the globally hyperbolic case.
Using the Berwald condition, the argument is entirely analogous to the completeness of asymptotes in the local case given above, and will be omitted.
We refer to  \cite{Galloway:89} and  \cite{quintet} for details in the Lorentzian globally hyperbolic case as well as to  \cite{LMO3} for the globalization argument in the Lorentz--Finsler case.
\end{proof}

\subsection*{Acknowledgments}
EC is partially supported  by ``INdAM - GNAMPA Project''  CUP E53C25002010001.
AO is supported in part by the \"OAW-DOC scholarship of the Austrian Academy of Sciences. He would like to thank Darko Mitrovic for helpful discussions.
SO is supported by JSPS Grant-in-Aid for Scientific Research (KAKENHI) 22H04942, 24K00523.
A large part of this work was carried out during his visit to Politecnico di Bari, he is grateful for the kind hospitality.
This research was funded in part by   MUR under the Programme ``Department of Excellence'' Legge 232/2016  (Grant No. CUP - D93C23000100001) and by the Austrian Science Fund (FWF) [Grant DOI \href{https://doi.org/10.55776/PAT1996423}{10.55776/PAT1996423}, \href{https://doi.org/10.55776/P33594}{10.55776/P33594}, \href{https://doi.org/10.55776/EFP6}{10.55776/EFP6} and \href{https://doi.org/10.55776/J4913}{10.55776/J4913}]. For open access purposes, the authors have applied a CC BY public copyright license to any author accepted manuscript version arising from this submission.

\addcontentsline{toc}{section}{References}

\end{document}